\let\oldcite\cite                                  
\newtheorem{thm}{Theorem}[section]
\newtheorem{cor}[thm]{Corollary}
\newtheorem{lem}[thm]{Lemma}
\newtheorem{prop}[thm]{Proposition}
\theoremstyle{definition}
\newtheorem{defn}[thm]{Definition}
\theoremstyle{remark}
\newtheorem{rem}[thm]{Remark}
\numberwithin{equation}{section} \theoremstyle{remark}
\newtheorem{ex}[thm]{Example}
\newcommand{\mcY}{\mathcal{Y}}
\newcommand{\mcZ}{\mathcal{Z}}
\newcommand{\mcK}{\mathcal{K}}
\newcommand{\mcG}{\mathcal{G}}
\newcommand{\mcH}{\mathcal{H}}
\newcommand{\mcP}{\mathcal{P}}
\newcommand{\mfZ}{\mathfrak{Z}}
\newcommand{\bbG}{\mathbb{G}}
\newcommand{\bbH}{\mathbb{H}}
\newcommand{\bbK}{\mathbb{K}}
\newcommand{\mcGg}{\mathcal{G}_{\Gamma}}
\newcommand{\bbGg}{\mathbb{G}_{\Gamma}}
\newcommand{\al}{\alpha}
\newcommand{\be}{\beta}
\newcommand{\vep}{\varepsilon}
\newcommand{\si}{\sigma}
\newcommand{\ups}{\upsilon}
\newcommand{\vart}{\vartheta}
\newcommand{\vf}{\varphi}
\newcommand{\urho}{\rho}
\newcommand{\Ob}{\operatorname{Ob}}
\newcommand{\id}{\operatorname{id}}
\newcommand{\pr}{\operatorname{pr}}
\newcommand{\coker}{\operatorname{coker}}
\def\smashedst{\setbox0=\hbox{$\rightrightarrows$}\ht0=4pt\box0}
\newcommand{\sst}[1]{\stackrel{#1}{\smashedst}}
\newcommand{\lra}{\longrightarrow}
\newcommand{\llra}[1]{\stackrel{#1}{\lra}}
\newcommand{\oux}[2]{\underset{#1}{\overset{#2}\times}}
\newcommand{\ox}[1]{\overset{#1}{\times}}
\begin{document}

\title{Group cohomology with coefficients in a crossed-module}

\author{Behrang Noohi}

\begin{abstract}
We compare three different ways of defining group cohomology with
coefficients in a crossed-module: 1) explicit approach via cocycles; 2)
geometric approach via gerbes; 3) group theoretic approach via butterflies. We
discuss the case where the crossed-module is braided and the case where the
braiding is symmetric. We prove the functoriality of the cohomologies with
respect to weak morphisms of crossed-modules and also prove the ``long'' exact
cohomology sequence associated to a short exact sequence of crossed-modules and
weak morphisms.
\end{abstract}

\maketitle%

\tableofcontents%

\section{Introduction}

These notes grew out as an attempt to answer certain questions
regarding group cohomology with coefficients in a crossed-module
which were posed to us by M.~Borovoi in relation to his work on
abelian Galois cohomology of reductive groups \cite{Bor}.

We collect some known or not so-well-known results in this area and
put them in a coherent (and hopefully user-friendly) form, as well
as add our own new approach to the subject via butterflies.
We hope that the application-minded user finds these notes beneficial.
We especially expect these result to be useful in Galois cohomology (e.g., in 
the study of relative Picard groups of Brauer groups).

Let us outline the content of the paper. Let $\Gamma$ be a group
acting strictly on a crossed-module $\bbG$. We investigate the group
cohomologies $H^{i}(\Gamma,\bbG)$. We compare three different ways
of constructing the cohomologies $H^{i}(\Gamma,\bbG)$, $i=-1,0,1$.
One approach is entirely new. We also work out some novel aspects of
the other two approaches which, to our knowledge, were not
considered previously.

Let us briefly describe the three approaches that we are
considering. 

\medskip
\noindent{\em The cocycle approach.}
The first approach uses an explicit cocycle description
of the cohomology groups.  Many people have worked on this.
The original idea goes back to Dedecker
\cite{Dedecker1,Dedecker2}. But he only considers the case of
a trivial $\Gamma$-action (where things
get oversimplified). Borovoi \cite{Bor} treats the general case in
his study of abelianization  of Galois cohomology of reductive
groups. A systematic approach is developed in \cite{CeFe} where
the more general case of a 2-group fibered over a category is treated.
The cohomology groups with coefficients in a symmetric
braided crossed-module have been studied in
\cite{CaMa,BuCaCe,Ul}. The paper of Garz\'on and del R\'io \cite{GadR} 
seems to be
the first place where the group structure on $H^1$ appears in print.
Breen's letter to Borovoi \cite{BreenBor} also discusses the group structure 
on $H^1$ in the crossed-module language and gives explicit formulas.
Also relevant is the work \cite{BGPT} in
which the authors study homotopy types of equivariant
crossed-complexes. 

We also point out that there is a standard way of going from \u Cech
cohomology to group cohomology, as discussed in (\cite{Tannakian},
$\S$ 5.7). In this way, it is possible in principle to deduce results 
about group cohomology from Breen's general 
results on \u Cech cohomology.

In Sections \ref{S:0}-\ref{S:1} we rework the definitions of $H^i$, $i=-1,0,1$.
The only originality we may claim in these sections
is merely in the form of presentation (e.g., explicitly working out all the formulas
in the language of crossed-modules), as the concepts are well understood. 

What seems to  be original here is that in $\S$\ref{S:B} we introduce an explicit 
crossed-module in groupoids concentrated in degrees $[-1,1]$, denoted 
$\mcK^{\leq 1}(\Gamma,\bbG)$, whose cohomologies are precisely
$H^{i}(\Gamma,\bbG)$. This crossed-module in groupoids encodes everything 
that is known about the $H^i$ (and more).

In the case where $\bbG$ is endowed with a
$\Gamma$-equivariant braiding, we show that $\mcK^{\leq
1}(\Gamma,\bbG)$ is a 2-crossed-module. In particular,
$H^{1}(\Gamma,\bbG)$ is a group and $H^{i}(\Gamma,\bbG)$ are
abelian, $i=-1,0$. When the braiding is symmetric, we show  that
$\mcK^{\leq 1}(\Gamma,\bbG)$ is a braided 2-crossed-module. This implies that
$H^{1}(\Gamma,\bbG)$ is also abelian. 

We also prove that $\mcK^{\leq 1}(\Gamma,\bbG)$ is functorial in
(strict) $\Gamma$-equivariant morphisms of crossed-modules and takes
an equivalence of crossed-modules to an equivalence of
crossed-modules in groupoids (or of 2-crossed-modules, 
resp., braided 2-crossed-modules, 
in the case where $\bbG$ is braided, resp., symmetric).
In particular, an equivalence of crossed-modules induces
an isomorphism on all $H^i$.

\medskip
\noindent{\em The butterfly approach.}
In the second approach, we construct a 2-groupoid
$\mfZ(\Gamma,\bbG)$ such that $H^{i}(\Gamma,\bbG)\cong
\pi_{1-i}\mfZ(\Gamma,\bbG)$. The objects of this 2-groupoid are
certain diagrams of groups involving $\Gamma$ and the $G_i$; see
$\S$\ref{SS:H1new}. In $\S$\ref{SS:relation} we give an sketch of
how to construct a biequivalence between $\mcK^{\leq
1}(\Gamma,\bbG)$ and the crossed-module in groupoids associated to
$\mfZ(\Gamma,\bbG)$. 

We show that $\mfZ(\Gamma,\bbG)$ is  functorial
in weak $\Gamma$-equivariant morphisms $\bbH \to \bbG$ of
crossed-modules (read {\em strong $\Gamma$-equivariant
butterflies}). In particular, it takes an equivalence of butterflies
to an equivalence of 2-groupoids. In the case where $\bbG$ is
endowed  with a $\Gamma$-equivariant braiding, we endow
$\mfZ(\Gamma,\bbG)$ with a natural monoidal structure which makes it
a group object in the category of 2-groupoids and weak functors.
Under the equivalence between $\mfZ(\Gamma,\bbG)$ and $\mcK^{\leq
1}(\Gamma,\bbG)$, the group structure on the former corresponds to
the 2-crossed-module structure on the latter. In the case where the
braiding on $\bbG$ is symmetric,
$\mfZ(\Gamma,\bbG)$ admits a symmetric braiding.

\medskip
\noindent{\em The gerbe approach.}
Finally, the third approach is that of Breen \cite{Bitors} adopted
to our specific situation (it is also closely related to \cite{CeFe}).
We  construct another 2-groupoid $\mfZ(\Gamma,\mcG)$ which we show is
naturally biequivalent to  $\mfZ(\Gamma,\bbG)$. Here, $\mcG$ is the
2-group associated to $\bbG$. The objects of this 2-groupoid are
principal $\mcG$-bundles over the classifying stack $B\Gamma$ of
$\Gamma$. We show that $\mfZ(\Gamma,\mcG)$ is  functorial in weak
$\Gamma$-equivariant morphisms $\bbH \to \bbG$ of crossed-modules
(read $\Gamma$-equivariant butterflies).

 In the case where $\bbG$ is
endowed  with a $\Gamma$-equivariant braiding, $\mfZ(\Gamma,\mcG)$
admits a natural monoidal structure which makes it a group object in
the category of 2-groupoids and weak functors. In this case,  the
equivalence between $\mfZ(\Gamma,\mcG)$ and $\mfZ(\Gamma,\bbG)$ is
monoidal. When  the braiding on $\bbG$ is symmetric,
$\mfZ(\Gamma,\mcG)$ admits a symmetric braiding and so does the
equivalence between $\mfZ(\Gamma,\mcG)$ and $\mfZ(\Gamma,\bbG)$.

\medskip

We also consider the last two approaches in the case where
everything is over a Grothendieck site. This is useful for geometric
applications in which $\Gamma$ and $\bbG$ are topological,
Lie, algebraic, and so on.

Finally, we show that a short exact sequence
        $$1 \to \bbK \to \bbH \to \bbG \to 1$$
of $\Gamma$-crossed-modules and weakly $\Gamma$-equivariant weak
morphisms (read $\Gamma$-butterflies) over a Grothendieck site gives
rise to a long exact cohomology sequence
    {\small $$ \xymatrix@C=12pt{%
      1 \ar[r] & H^{-1} (\Gamma,\bbK) \ar[r] & H^{-1} (\Gamma,\bbH)
        \ar[r] & H^{-1} (\Gamma,\bbG) \ar[r] &
      H^{0} (\Gamma,\bbK) \ar `[d] `[lll] `[dlll] [dlll] \\
      & H^{0} (\Gamma,\bbH) \ar[r] & H^{0} (\Gamma,\bbG) \ar[r] &
      H^{1} (\Gamma,\bbK)
      \ar[r] & H^{1} (\Gamma,\bbH) \ar[r] & H^{1} (\Gamma,\bbG).
                }   $$  }
Also see \cite{CeGa} and \cite{CeFe}, Theorem 31.

\medskip
\noindent{\em One last comment.} We end this introduction by
pointing out one serious omission in this paper: $H^2$. In the case
where $\bbG$ has a $\Gamma$-equivariant braiding, one expects to be
able to push the theory one step further to include $H^2$. In the
first approach, the complex $\mcK^{\leq 1}(\Gamma,\bbG)$ is expected
to be the $\leq 1$ truncation of a certain complex $\mcK^{\leq
2}(\Gamma,\bbG)$ concentrated in degrees $[-1,2]$. In the second and
the third approaches, the 2-groupoids  $\mfZ(\Gamma,\bbG)$ and
$\mfZ(\Gamma,\mcG)$ get  replaced by certain pointed 3-groupoids
whose automorphism 2-groupoids of the base object are
$\mfZ(\Gamma,\bbG)$ and  $\mfZ(\Gamma,\mcG)$. The long exact
cohomology sequence should also extend to include the three
additional $H^2$ terms.  The machinery for doing all this is being
developed in a forthcoming paper \cite{ButterflyIII} and is not
available yet in print. For that reason, we will not get into the
discussion of $H^2$ in these notes.

All of the above can also be done with the group $\Gamma$ replaced by a 
crossed-module (the action on $\bbG$ remains strict). This is useful 
because in some applications (e.g., in Galois cohomology) the action of
$\Gamma$ on $\bbG$ is not strict but it can be replaced with
a strict action of a 2-group equivalent to $\Gamma$. We will not pursue this topic
here and leave it to a future paper.

\medskip
\noindent{\bf Acknowledgement.} I am grateful to M.~Borovoi for
getting me interested in this work and for many useful email
conversations. I thank E.~Aldrovandi for a discussion we had
regarding this problem (especially 
$\S$\ref{SS:braidinggerbes}) and L.~Breen for his comments
on an earlier version of the paper. I would also like to thank Fernando
Muro who made me aware of the work of the Granada school. And a special
thanks to the refree for carefully reading the paper and making helpful suggestions. 
The content of Appendix A borrows greatly
from the referee's report.

\section{Notation and conventions}

Let $\bbG=[\partial\: G_1\to G_0]$ be  a crossed-module. We assume
that $G_0$ acts on $G_1$ on the right. We denote the action of $g
\in G_0$ on $\al \in G_1$ by $\al^g$. Let $\Gamma$ be a group acting
on a crossed-module $\bbG=[\partial\: G_1\to G_0]$ on the left.  We
denote the action of $\si \in \Gamma$ on an element $g$ by
${}^{\si}\!g$. We require the $\Gamma$ action on $\bbG$  to be
compatible with the action of $G_0$ on $G_1$ in the following way:
   $${}^{\si}\!(\al^g)=({}^{\si}\!\al)^{{}^{\si}\!g}.$$
We usually denote  $({}^{\si}\!\al)^g$ by
${}^{\si}\!\al^{g}$. Note that this is {\em not} equal to
${}^{\si}\!(\al^{g})$.

We refer to a crossed-module $\bbG$
equipped with an action by a group $\Gamma$ as  above as a {\em $\Gamma$-equivariant 
crossed-module} or for short as a {\em $\Gamma$-crossed-module}.

Our convention for  braiding $\{,\} \: G_0\times G_0 \to G_1$ is
that $\partial\{g,h\}=g^{-1}h^{-1}gh$. The braidings are assumed to
be $\Gamma$-equivariant in the sense that
$\{{}^{\si}\!g,{}^{\si}\!h\}={}^{\si}\!\{g,h\}$, for every
$g,h \in G_0$ and $\si \in \Gamma$.

Whenever there is fear of confusion, we use a dot $\cdot$ for
products in complicated formulas; the same products may appear
without a dot in other places (even in the same formulas).

All groupoids, 2-groupoids and so on are assumed to be small.

\section{$H^{-1}$ and $H^0$ of a $\Gamma$-equivariant crossed-module}
{\label{S:0}}

By definition, $H^{-1}(\Gamma,\bbG)=(\ker\partial)^{\Gamma}$. This
is an abelian group. Let us now define $H^{0}(\Gamma,\bbG)$.

A {\bf $0$-cochain} is a pair $(g, \theta)$ where $g \in G_0$ and
$\theta \: \Gamma \to G_1$ is a pointed map. We denote the set of
$0$-cochains by $C^{0}(\Gamma,\bbG)$. There is a multiplication on
$C^{0}(\Gamma,\bbG)$ which makes it into a group. By definition, the
product of two $0$-cochains $(g_1,\theta_1)$ and $(g_2,\theta_2)$ is
    $$(g_1,\theta_1)(g_2,\theta_2):=(g_1g_2,\theta_1^{g_2}\theta_2),$$
where $\theta_1^{g_2}\theta_2 \: \Gamma \to G_1$ is defined by $\si
\mapsto \theta_1(\si)^{g_2}\theta_2(\si)$.

\begin{rem}{\label{R:wrongprod}}
  In the case where $\bbG$ is braided, there is another way of
  making $C^{0}(\Gamma,\bbG)$ into a group. This will be discussed in
  $\S$\ref{SS:braiding0} and used later on in $\S$\ref{S:1}.
\end{rem}

A  0-cochain $(g, \theta)$ is a {\bf $0$-cocycle} if the following
conditions are satisfied:
 \begin{itemize}
    \item For every $\si \in \Gamma$,
        $\partial\theta(\si)=g^{-1}\cdot{}^{\si}\!g$,
    \item For every $\si,\tau \in \Gamma$, $\theta(\si\tau)=
         \theta(\si)\cdot{}^{\si}\!\theta(\tau)$.
 \end{itemize}
The 0-cocycles form a subgroup of  $C^{0}(\Gamma,\bbG)$ which  we denote
by $Z^{0}(\Gamma,\bbG)$.

An element in $Z^0(\Gamma,\bbG)$ is a {\bf $0$-coboundary} if it is
of the form $(\partial\mu,\theta_{\mu})$, where $\mu \in G_1$ and
$\theta_{\mu}  \: \Gamma \to G_1$ is defined by
$\theta_{\mu}(\si):=\mu^{-1}\cdot{}^{\si}\!\mu$. It is easy to see
that the set $B^{0}(\Gamma,\bbG)$ of 0-coboundaries is a normal
subgroup of $Z^{0}(\Gamma,\bbG)$; it is in fact normal in
$C^{0}(\Gamma,\bbG)$ too. We define
     $$H^{0}(\Gamma,\bbG):=\frac{Z^{0}(\Gamma,\bbG)}{B^{0}(\Gamma,\bbG)}.$$
This group is not in general abelian.

A better way of phrasing the above discussion is to say that
      $$[G_1 \to Z^0(\Gamma,\bbG)]$$
         $$\mu \mapsto (\partial\mu,\theta_{\mu})$$
is a crossed-module. The action of  $Z^0(\Gamma,\bbG)$ on $G_1$ is
defined by
    $$\mu^{(g,\theta)}:=\mu^g.$$
\subsection{In the presence of a braiding on $\bbG$}{\label{SS:braiding0}}

When $\bbG$ is braided, $H^{0}(\Gamma,\bbG)$ is abelian.
This is true thanks to the following.

\begin{lem}
  The commutator of the two $0$-cocycles $(g,\theta)$ and
  $(g',\theta')$ in $Z^{0}(\Gamma,\bbG)$ is equal to the
  $0$-coboundary $(\partial\mu,\theta_{\mu})$, where
  $\mu=\{g,g'\}$.
\end{lem}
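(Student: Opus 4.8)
The commutator convention is forced by the braiding axiom: since $\partial\{g,h\}=g^{-1}h^{-1}gh$, I read the commutator as $[x,y]:=x^{-1}y^{-1}xy$, where $x=(g,\theta)$ and $y=(g',\theta')$. The plan is to compute $[x,y]$ directly from the group law $(g_1,\theta_1)(g_2,\theta_2)=(g_1g_2,\theta_1^{g_2}\theta_2)$ and to match it componentwise against the coboundary $(\partial\mu,\theta_\mu)$ with $\mu=\{g,g'\}$. The $G_0$-component needs no work: the four factors of $[x,y]$ contribute $g^{-1}g'^{-1}gg'$, which is $\partial\{g,g'\}=\partial\mu$ by the definition of the braiding. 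All the content therefore sits in the $\theta$-component, which I verify pointwise at each $\si\in\Gamma$.

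For the bookkeeping I would first record that $(g,\theta)^{-1}=(g^{-1},\eta)$ with $\eta(\si)=(\theta(\si)^{g^{-1}})^{-1}$, and that the $\theta$-component of a four-fold product at $\si$ is $\theta_1(\si)^{g_2g_3g_4}\,\theta_2(\si)^{g_3g_4}\,\theta_3(\si)^{g_4}\,\theta_4(\si)$. Writing $a=\theta(\si)$ and $b=\theta'(\si)$ and substituting $g'^{-1}gg'=g\,\partial\mu$, the $\theta$-component of $[x,y]$ at $\si$ equals $(a^{-1})^{\partial\mu}(b^{-1})^{g\partial\mu}a^{g'}b$; pushing the $\partial\mu$-conjugations out with the Peiffer identity $\al^{\partial\be}=\be^{-1}\al\be$ turns this into
$$\mu^{-1}\,a^{-1}(b^{g})^{-1}\,\mu\,a^{g'}\,b.$$
On the other side, $\Gamma$-equivariance of the braiding gives $\theta_\mu(\si)=\mu^{-1}\cdot{}^{\si}\!\mu=\mu^{-1}\{{}^{\si}\!g,{}^{\si}\!g'\}$. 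Cancelling the common $\mu^{-1}$, the whole lemma collapses to the single identity
$$\{{}^{\si}\!g,{}^{\si}\!g'\}=a^{-1}(b^{g})^{-1}\,\{g,g'\}\,a^{g'}\,b. \qquad(\star)$$

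To finish I would prove $(\star)$. The $0$-cocycle condition gives ${}^{\si}\!g=g\,\partial a$ and ${}^{\si}\!g'=g'\,\partial b$, so its left-hand side is $\{g\,\partial a,\;g'\,\partial b\}$. Expanding this by the bi-multiplicativity of the braiding in each slot, $\{g_1g_2,h\}=\{g_1,h\}^{g_2}\{g_2,h\}$ and $\{g,h_1h_2\}=\{g,h_2\}\{g,h_1\}^{h_2}$, and then applying the compatibility relations $\{\partial a,h\}=a^{-1}a^{h}$ and $\{g,\partial b\}=(b^{g})^{-1}b$ together with repeated use of the Peiffer identity, one sees the intermediate $a,\partial a,b,\partial b$ factors telescope, reproducing exactly the right-hand side of $(\star)$. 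I expect this last expansion to be the only real obstacle: the difficulty is purely organizational, namely keeping the left $\Gamma$-action, the right $G_0$-action, the braiding axioms and the Peiffer rule mutually consistent, and applying the two bi-multiplicativity relations in the order that makes the conjugations cancel. Everything else is formal.
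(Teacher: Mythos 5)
Your proof is correct; the paper states this lemma without proof, and your direct cocycle computation is exactly the verification it leaves to the reader. I checked the one step you only sketch: expanding $\{g\,\partial a,\,g'\,\partial b\}$ via $\{g_1g_2,k\}=\{g_1,k\}^{g_2}\{g_2,k\}$ and $\{g,hk\}=\{g,k\}\{g,h\}^{k}$ gives $\bigl(a^{-1}(b^{g})^{-1}\mu\, b\, a\bigr)\bigl(a^{-1}b^{-1}a^{g'}b\bigr)=a^{-1}(b^{g})^{-1}\mu\, a^{g'}b$, which is precisely the right-hand side of your $(\star)$, so the telescoping happens as you claim.
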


In fact, it follows from the above lemma that the bracket
    $$\{(g,\theta),(g',\theta')\}:=\{g,g'\}$$
makes the crossed-module $[G_1 \to Z^0(\Gamma,\bbG)]$
defined at the end of the previous subsection into a braided crossed-module.

As we pointed out in Remark \ref{R:wrongprod}, in the presence of a
braiding on $\bbG$, there is a second product on
$C^{0}(\Gamma,\bbG)$ which makes it into a group as well. This new
product will be used in an essential way in $\S$\ref{S:1}. Here is
how it is defined. Given two 0-cochains $(g_1,\theta_1)$ and
$(g_2,\theta_2)$, their product is the 0-cochain $(g_1g_2,\vart)$,
where $\vart$ is defined by the formula
    $$
      \vart(\si):=\theta_1(\si)^{p_2(\si)}\cdot\theta_2(\si)
          \cdot\{g_2^{-1},g_1^{-1}{}^{\si}\!g_1\}^{{}^{\si}\!g_2}
     $$
Here, $p_2(\si)=g_2^{-1}\cdot{}^{\si}\!g_2\cdot\partial\theta_2(\si)^{-1}$.

It is not hard to check that, when restricted to
$Z^{0}(\Gamma,\bbG)$, the above product
coincides with the one defined in the previous subsection.

\section{$H^1$ of a $\Gamma$-equivariant crossed-module}{\label{S:1}}

A $1$-cocycle on $\Gamma$ with values in $\bbG$ is a pair $(p,\vep)$
where
   $$p\: \Gamma \to G_0 \ \ \text{and} \ \ \vep \: \Gamma \times \Gamma \to G_1$$
are pointed set maps satisfying the following conditions:
 \begin{itemize}
     \item For every
        $\si, \tau \in \Gamma$, $p(\si\tau)\cdot\partial\vep(\si,\tau)=
            p(\si)\cdot{}^{\si}\!p(\tau)$.
     \item For every $\si, \tau, \ups \in \Gamma$,
          $\vep(\si,\tau\ups)\cdot{}^{\si}\!\vep(\tau,\ups)=
           \vep(\si\tau,\ups)\cdot\vep(\si,\tau)^{{}^{\si\tau}\!p(\ups)}$.
 \end{itemize}

We denote the set of 1-cocycles by $Z^{1}(\Gamma,\bbG)$. This is a
pointed set with the base point being the pair of constant functions
$(1_{G_0},1_{G_1})$. In fact, $Z^{1}(\Gamma,\bbG)$
is the set of objects a groupoid $\mcZ^{1}(\Gamma,\bbG)$. An arrow
             $$(p_1,\vep_1) \to (p_2,\vep_2)$$
in $\mcZ^{1}(\Gamma,\bbG)$ is given by a pair $(g,\theta)$,
with $g \in G_0$ and $\theta \: \Gamma \to G_1$ a pointed map, such that
   \begin{itemize}
     \item for every $\si \in \Gamma$,
            $$p_2(\si)=g^{-1}\cdot p_1(\si)\cdot{}^{\si}\!g\cdot\partial\theta(\si)^{-1};$$
     \item for every $\si, \tau \in \Gamma$, 
     $$\vep_2(\si,\tau)=
        \theta(\si\tau)\cdot\vep_1(\si,\tau)^{{}^{\si\tau}\!g}\cdot{}^{\si}\!\theta(\tau)^{-1}
             \cdot\big(\theta(\si)^{-1}\big)^{{}^{\si}\!p_2(\tau)}.$$
   \end{itemize}

The above formulas can be interpreted as a right action of the group
$C^{0}(\Gamma,\bbG)$ (with the group structure introduced at the
beginning of $\S$\ref{S:0}) on the set $Z^{1}(\Gamma,\bbG)$. We
denote this right action by
   $$(p_2,\vep_2)=(p_1,\vep_1)^{(g,\theta)}.$$
The groupoid $\mcZ^{1}(\Gamma,\bbG)$ is
simply the transformation groupoid of this action.

We define $H^{1}(\Gamma,\bbG)$ to be the pointed set of isomorphism
classes of the groupoid $\mcZ^{1}(\Gamma,\bbG)$.

\subsection{In the presence of a braiding on $\bbG$}{\label{SS:braiding1}}

In the previous subsection, we constructed the groupoid
$\mcZ^{1}(\Gamma,\bbG)$ of 1-cocycles as the transformation groupoid
of a certain action of $C^{0}(\Gamma,\bbG)$ on $Z^{1}(\Gamma,\bbG)$.
In the case where $\bbG$ is endowed with a $\Gamma$-equivariant
braiding, we will see below that the set $Z^{1}(\Gamma,\bbG)$ itself
also has a group structure. In this situation, it is natural to ask
is whether the action of $C^{0}(\Gamma,\bbG)$ on
$Z^{1}(\Gamma,\bbG)$ is a (right) multiplication action via a
certain group homomorphism $C^{0}(\Gamma,\bbG) \to
Z^{1}(\Gamma,\bbG)$.

The answer to this question appears to be negative. However, if we
use the alternative group structure on $C^{0}(\Gamma,\bbG)$ that we
introduced in $\S$\ref{SS:braiding0}, then there does exist such a
group homomorphism $d \: C^{0}(\Gamma,\bbG) \to Z^{1}(\Gamma,\bbG)$.
We point out that the right multiplication action of
$C^{0}(\Gamma,\bbG)$ on $Z^{1}(\Gamma,\bbG)$ obtained via $d$ is
{\em different} from the action discussed in the previous
subsection. But, fortunately, the resulting transformation groupoids
are the same (Lemma \ref{L:coincide}). In particular,
$H^{1}(\Gamma,\bbG)$ is equal to the cokernel of $d$.

\medskip
\noindent{\em The product in $Z^{1}(\Gamma,\bbG)$.}
We define a product in $Z^{1}(\Gamma,\bbG)$ as follows. More
generally, let $C^{1}(\Gamma,\bbG)$ be the set of all $1$-cochains,
where by a
 {\bf $1$-cochain} we mean a pair $(p,\vep)$,
     $$p \:  \Gamma \to G_0, \   \ \vep \:
         \Gamma\times \Gamma \to G_1,$$
of pointed set maps. Let $(p_1,\vep_1)$ and $(p_2,\vep_2)$ be in
$C^{1}(\Gamma,\bbG)$. We define the product
$(p_1,\vep_1)\cdot(p_2,\vep_2)$ to be the pair $(p,\vep)$ where
         $$p(\si):=p_1(\si)p_2(\si),$$
      $$\vep(\si,\tau):=\vep_1(\si,\tau)^{p_2(\si\tau)}\cdot\vep_2(\si,\tau)
        \cdot\{p_2(\si),{}^{\si}\!p_1(\tau)\}^{{}^{\si}\!p_2(\tau)}.$$
It can be checked that this makes $C^{1}(\Gamma,\bbG)$ into a group.
The inverse of the element  $(p,\vep)$ in $C^{1}(\Gamma,\bbG)$ is
the pair $(q,\lambda)$ where
             $$q(\si):=p(\si)^{-1},$$
        $$\lambda(\si,\tau)=(\vep(\si,\tau)^{-1})^{p(\si\tau)^{-1}}
            \cdot\{p(\si)^{-1},{}^{\si}\!p(\tau)^{-1}\}.$$
The subset    $Z^{1}(\Gamma,\bbG) \subset C^{1}(\Gamma,\bbG)$ is indeed
a subgroup.

\medskip
\noindent{\em The group homomorphism $d \: C^{0}(\Gamma,\bbG)
\to Z^{1}(\Gamma,\bbG)$.}
Next we construct a group homomorphism 
  $$d \: C^{0}(\Gamma,\bbG) \to Z^{1}(\Gamma,\bbG).$$  
(Note that $ C^{0}(\Gamma,\bbG)$ is
endowed with   the group structure defined in
$\S$\ref{SS:braiding0}.) Let $(g,\theta)$ be in
$C^{0}(\Gamma,\bbG)$. We define $d(g,\theta)$ to be the pair
$(p,\vep)$ where
       $$p(\si):=g^{-1}\cdot{}^{\si}\!g\cdot\partial\theta(\si)^{-1},$$
       $$\vep(\si,\tau):=\theta(\si\tau)
         \cdot\big(\theta(\si)^{-1}\big)^{({}^{\si}\!g^{-1}\cdot{}^{\si\tau}\!g)}
                   \cdot{}^{\si}\!\theta(\tau)^{-1}$$
It is not difficult to  check that this is a group homomorphism.

\medskip
\noindent{\em The crossed-module $[d \: C^{0}/B^{0} 
\to Z^{1}]$.}
The group homomorphism $d$ vanishes on the subgroup
$B^{0}(\Gamma,\bbG)  \subseteq C^{0}(\Gamma,\bbG) $ of
0-coboundaries. Therefore, $d$ factors through a homomorphism
       $$d \:  C^{0}(\Gamma,\bbG)/B^{0}(\Gamma,\bbG) \to Z^{1}(\Gamma,\bbG),$$
which, by abuse of notation, we have denoted again by $d$.
There is a right action of $Z^{1}(\Gamma,\bbG)$ on
$C^{0}(\Gamma,\bbG)$ which preserves $B^{0}(\Gamma,\bbG)$ and
makes
        $$[d \:  C^{0}(\Gamma,\bbG)/B^{0}(\Gamma,\bbG) \to Z^{1}(\Gamma,\bbG)]$$
into a crossed-module. It is given by
   $$(g,\theta)^{(p,\vep)}=(g,\vart),$$
where $\vart \: \Gamma \to G_1$ is defined by
    $$\vart(\si)=\theta(\si)^{p(\si)}\cdot\{p(\si),
    {}^{\si}\!g\}\cdot\{g,p(\si)\}^{g^{-1}\cdot{}^{\si}\!g}.$$

Observe that the kernel of $d$ coincides with $H^{0}(\Gamma,\bbG)$.
We show that the cokernel of $d$ coincides with
$H^{1}(\Gamma,\bbG)$. We do so by comparing the action of
$C^{0}(\Gamma,\bbG)$ on $Z^{1}(\Gamma,\bbG)$ introduced in the
previous subsection (the one that gave rise to the groupoid
$\mcZ^{1}(\Gamma,\bbG)$ of 1-cocycles) with the multiplication
action of $C^{0}(\Gamma,\bbG)$ on $Z^{1}(\Gamma,\bbG)$ via $d$. More
precisely, we have the following.

\begin{lem}{\label{L:coincide}}
  Let $(g,\theta)$ be in  $C^{0}(\Gamma,\bbG)$ and $(p,\vep)$ in
  $Z^{1}(\Gamma,\bbG)$. Let $(p,\vep)^{(g,\theta)}$ be the action of
  $C^{0}(\Gamma,\bbG)$ on $Z^{1}(\Gamma,\bbG)$ introduced at the end of
  the previous subsection. Then,
        $$(p,\vep)^{(g,\theta)}=
          (p,\vep)\cdot d\big(g,\theta\cdot \delta(p,g)\big),$$
  where $\delta(p,g) \: \Gamma \to G_1$ is defined by
    $$\si \mapsto \{g,p(\si)\}^{g^{-1}\cdot{}^{\si}\!g}.$$
\end{lem}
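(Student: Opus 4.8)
The plan is to prove Lemma \ref{L:coincide} by a direct computation, verifying that the two pairs $(p,\vep)^{(g,\theta)}$ and $(p,\vep)\cdot d\big(g,\theta\cdot\delta(p,g)\big)$ agree in both of their components. Since both sides are elements of $Z^{1}(\Gamma,\bbG)$, it suffices to check equality of the $G_0$-valued maps $\si \mapsto p'(\si)$ and of the $G_1$-valued maps $(\si,\tau)\mapsto \vep'(\si,\tau)$ separately. The $G_0$-component should be the easy half: on the left-hand side it is given by the action formula from $\S$\ref{S:1}, namely $g^{-1}\cdot p(\si)\cdot{}^{\si}\!g\cdot\partial\theta(\si)^{-1}$, while on the right-hand side it is $p(\si)$ times the first component of $d\big(g,\theta\cdot\delta(p,g)\big)$. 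Since $\delta(p,g)(\si)=\{g,p(\si)\}^{g^{-1}\cdot{}^{\si}\!g}$ lies in the image of a braiding, $\partial\delta(p,g)(\si)$ is a commutator, and I expect the terms to collapse once one uses $\partial\{g,p(\si)\}=g^{-1}p(\si)^{-1}gp(\si)$ together with the crossed-module axiom $\partial(\mu^h)=h^{-1}(\partial\mu)h$.

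First I would assemble the explicit formula for the right-hand side. Write $(g,\theta')$ for the $0$-cochain $\big(g,\theta\cdot\delta(p,g)\big)$, so $\theta'(\si)=\theta(\si)\cdot\{g,p(\si)\}^{g^{-1}\cdot{}^{\si}\!g}$, and substitute $\theta'$ into the definition of $d$ from $\S$\ref{SS:braiding1}. This produces an element $d(g,\theta')=(\bar p,\bar\vep)\in Z^{1}(\Gamma,\bbG)$; then $(p,\vep)\cdot(\bar p,\bar\vep)$ is computed using the product formula for $C^{1}(\Gamma,\bbG)$, which involves the extra braiding correction $\{\bar p(\si),{}^{\si}\!p(\tau)\}^{{}^{\si}\!\bar p(\tau)}$. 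The target to match is the left-hand side, whose $\vep$-component is the action formula
\[
\theta(\si\tau)\cdot\vep(\si,\tau)^{{}^{\si\tau}\!g}\cdot{}^{\si}\!\theta(\tau)^{-1}\cdot\big(\theta(\si)^{-1}\big)^{{}^{\si}\!p'(\tau)},
\]
where $p'(\tau)$ denotes the transformed $G_0$-value. The whole content of the lemma is that the braiding corrections coming from $d$, from $\delta(p,g)$, and from the product in $C^{1}$ conspire to reproduce exactly this expression.

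The main obstacle will be the bookkeeping in the $G_1$-component, where several braiding brackets appear and must be reconciled using the braiding identities and the Peiffer/equivariance relations. The key algebraic tools I expect to lean on are: the hexagon-type identities that a braiding $\{,\}$ satisfies (expanding $\{gh,k\}$ and $\{g,hk\}$ in terms of $\{g,k\}$, $\{h,k\}$ and conjugates), the $\Gamma$-equivariance $\{{}^{\si}\!g,{}^{\si}\!h\}={}^{\si}\!\{g,h\}$, and the compatibility ${}^{\si}\!(\al^g)=({}^{\si}\!\al)^{{}^{\si}\!g}$. The reason the specific correction $\delta(p,g)$ is the right one is that it is engineered to absorb precisely the braiding term $\{g,p(\si)\}^{g^{-1}\cdot{}^{\si}\!g}$ appearing in the action formula for $\vart$ at the end of $\S$\ref{SS:braiding1} — indeed, comparing that $\vart$ with the definition of $d$ already isolates the discrepancy as a braiding-valued $1$-cochain, and $\delta(p,g)$ is exactly its potential. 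I would therefore organize the calculation so that the two sides are each reduced to a normal form in which every $\theta$-term and every braiding-term is conjugated into a common position, and then match coefficients termwise; the genuinely delicate point is keeping track of which $G_0$-element each $G_1$-valued term is conjugated by, since these conjugating exponents differ across the three formulas and only coincide after applying the cocycle condition $p(\si\tau)\partial\vep(\si,\tau)=p(\si)\cdot{}^{\si}\!p(\tau)$ for $(p,\vep)$.
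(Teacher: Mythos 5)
Your overall strategy --- componentwise verification of an identity between two elements of $Z^{1}(\Gamma,\bbG)$, splitting into the $G_0$-valued and $G_1$-valued parts --- is the right one, and in fact the only one available: the paper states Lemma \ref{L:coincide} without any proof, so there is no argument of the author's to compare against. Your treatment of the $G_0$-component is correct and does close up exactly as you predict: writing $\theta'=\theta\cdot\delta(p,g)$, the identities $\partial(\mu^h)=h^{-1}\partial(\mu)h$ and $\partial\{g,p(\si)\}=g^{-1}p(\si)^{-1}gp(\si)$ give $\partial\theta'(\si)^{-1}=({}^{\si}\!g)^{-1}\cdot g\,p(\si)^{-1}g^{-1}p(\si)\cdot{}^{\si}\!g\cdot\partial\theta(\si)^{-1}$, so the first component of $d(g,\theta')$ is $p(\si)^{-1}g^{-1}p(\si)\cdot{}^{\si}\!g\cdot\partial\theta(\si)^{-1}$, and left multiplication by $p(\si)$ yields $g^{-1}\cdot p(\si)\cdot{}^{\si}\!g\cdot\partial\theta(\si)^{-1}$, matching the action formula of $\S$\ref{S:1}.

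The gap is that the $G_1$-component --- which carries the entire content of the lemma --- is never actually computed. You correctly list the tools (the bracket identities of $\S$\ref{SS:identities}, the $\Gamma$-equivariance $\{{}^{\si}\!g,{}^{\si}\!h\}={}^{\si}\!\{g,h\}$, the compatibility ${}^{\si}\!(\al^g)=({}^{\si}\!\al)^{{}^{\si}\!g}$, and the cocycle relation $p(\si\tau)\partial\vep(\si,\tau)=p(\si)\cdot{}^{\si}\!p(\tau)$), and you correctly locate the delicate point in the mismatch of conjugating exponents, which is resolved only via that cocycle relation. But the sentence asserting that the braiding corrections ``conspire to reproduce exactly this expression'' is the statement to be proved, not an argument for it: there are three independent sources of brackets on the right-hand side (the bracket inside $\delta(p,g)$, the terms produced by applying $d$ to $\theta'$, and the correction $\{\bar p(\si),{}^{\si}\!p(\tau)\}^{{}^{\si}\!\bar p(\tau)}$ coming from the product in $C^{1}(\Gamma,\bbG)$), and these must be shown to recombine into $\theta(\si\tau)\cdot\vep(\si,\tau)^{{}^{\si\tau}\!g}\cdot{}^{\si}\!\theta(\tau)^{-1}\cdot\big(\theta(\si)^{-1}\big)^{{}^{\si}\!p_2(\tau)}$ term by term. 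Until you exhibit the two normal forms and check that every $\theta$-term and every bracket-term appears with the same conjugating exponent on both sides, the lemma is not established; as written this is a sound plan with one half executed, not a proof.
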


\begin{cor}
    When $\bbG$ has a $\Gamma$-equivariant braiding, the first
    cohomology set $H^{1}(\Gamma,\bbG)$ inherits a natural group
    structure, $H^{0}(\Gamma,\bbG)$ is abelian, and there is a
    natural action of $H^{1}(\Gamma,\bbG)$
    on $H^{0}(\Gamma,\bbG)$.
\end{cor}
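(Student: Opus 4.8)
The plan is to deduce all three assertions from the single structural fact, established just above, that
$$[d\: C^{0}(\Gamma,\bbG)/B^{0}(\Gamma,\bbG) \to Z^{1}(\Gamma,\bbG)]$$
is a crossed-module, together with Lemma \ref{L:coincide} and the two identifications $\ker d = H^{0}(\Gamma,\bbG)$ and $\coker d = H^{1}(\Gamma,\bbG)$. First I would recall the three standard consequences of being a crossed-module $[\partial\: M \to N]$: the image $\partial(M)$ is normal in $N$, so $\coker\partial = N/\partial(M)$ is a group; the kernel $\ker\partial$ is central in $M$, hence abelian; and, since $\partial(M)$ acts trivially on $\ker\partial$ by the Peiffer identity, the $N$-action descends to an action of $\coker\partial$ on $\ker\partial$. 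Everything in the corollary is then an instance of these generalities once $H^{0}$ and $H^{1}$ are correctly matched with $\ker d$ and $\coker d$.

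For the group structure on $H^{1}(\Gamma,\bbG)$, the key point is to identify this pointed set, defined as the set of isomorphism classes of the transformation groupoid $\mcZ^{1}(\Gamma,\bbG)$, with $\coker d$. The orbit of a cocycle $(p,\vep)$ under the action defined in $\S$\ref{S:1} is $\{(p,\vep)^{(g,\theta)} : (g,\theta) \in C^{0}(\Gamma,\bbG)\}$, which by Lemma \ref{L:coincide} equals $\{(p,\vep)\cdot d(g,\theta\cdot\delta(p,g)) : (g,\theta)\in C^{0}(\Gamma,\bbG)\}$. For fixed $g$ and fixed $p$ the assignment $\theta \mapsto \theta\cdot\delta(p,g)$ is a bijection of the set of pointed maps $\Gamma \to G_1$ onto itself, so $(g,\theta) \mapsto (g,\theta\cdot\delta(p,g))$ is a bijection of $C^{0}(\Gamma,\bbG)$; consequently the orbit is exactly the coset $(p,\vep)\cdot\operatorname{im}(d)$. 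Thus the orbits of the groupoid action coincide with the cosets of $\operatorname{im}(d)$, and since $\operatorname{im}(d)$ is normal in $Z^{1}(\Gamma,\bbG)$ the quotient $H^{1}(\Gamma,\bbG) = Z^{1}(\Gamma,\bbG)/\operatorname{im}(d) = \coker d$ carries a natural group structure.

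The remaining two assertions are then immediate. Since $H^{0}(\Gamma,\bbG) = \ker d$ and the kernel of a crossed-module is central in the source group, $H^{0}(\Gamma,\bbG)$ is abelian; the action of $\coker d$ on $\ker d$ supplied by the crossed-module structure is, under the identifications above, precisely a natural action of $H^{1}(\Gamma,\bbG)$ on $H^{0}(\Gamma,\bbG)$. I expect the only genuine work to be the identification $H^{1} = \coker d$: one must check that the reparametrization $\theta\mapsto\theta\cdot\delta(p,g)$ really is a bijection of $C^{0}(\Gamma,\bbG)$ (in particular that $\delta(p,g)$ is a pointed map, which uses the normalization $\{g,1\}=1$ of the braiding), so that orbits and cosets agree, and that the resulting $H^{1}$-action on $H^{0}$ is well defined on classes. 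Both of these follow formally from Lemma \ref{L:coincide} and the Peiffer identity rather than from any new computation.
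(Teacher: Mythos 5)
Your proposal is correct and follows the same route the paper intends: the corollary is stated as an immediate consequence of the crossed-module structure on $[d\: C^{0}/B^{0}\to Z^{1}]$, the identification $\ker d = H^{0}(\Gamma,\bbG)$, and the identification $\coker d = H^{1}(\Gamma,\bbG)$ obtained from Lemma \ref{L:coincide} by exactly the orbit-equals-coset argument you give. Your only addition is to spell out why $\theta\mapsto\theta\cdot\delta(p,g)$ is a bijection so that orbits match cosets, which is the (correct) detail the paper leaves implicit.
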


\begin{rem}
  The crossed-module $[d \: C^{0}/B^{0}\to Z^{1}]$ is a model 
  for the 2-group $\mathcal{H}^1$ defined
  by Garz\'on and del R\'io \cite{GadR}.
\end{rem}

\subsection{When braiding is symmetric}{\label{SS:symmetricbraiding1}}

In the previous subsection, we saw that when $\bbG$ has a
$\Gamma$-equivariant braiding, the first cohomology set
$H^{1}(\Gamma,\bbG)$ carries a natural group structure. This was
done by identifying $H^{1}(\Gamma,\bbG)$ with the cokernel of the
crossed-module $[d \:  C^{0}/B^{0} \to Z^{1}]$. 
In the case where the braiding is symmetric
(i.e., $\{g,h\}\{h,g\}=1$) we can do even better.

\begin{lem}{\label{L:symmetric}}
     Suppose that the braiding on $\bbG$ is symmetric (resp.,
     Picard). Then, the crossed-module $[d \:
     C^{0}(\Gamma,\bbG)/B^{0}(\Gamma,\bbG) \to Z^{1}(\Gamma,\bbG)]$
     is braided and symmetric (resp., Picard). The braiding is given
     by $$\{(p_1,\vep_1),(p_2,\vep_2)\}:=(1,\{p_2,p_1\}),$$ where
     $\{p_2,p_1\} \: \Gamma \to G_1$ is the pointwise bracket of the
     maps $p_1,p_2 \: \Gamma \to G_0$.
     (Note the reverse order.)
\end{lem}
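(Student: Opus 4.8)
The plan is to check that $\{n_1,n_2\}:=(1,\{p_2,p_1\})$, for $n_i=(p_i,\vep_i)\in Z^1(\Gamma,\bbG)$, satisfies the three defining conditions of a braiding on the crossed-module $[d\:C^0/B^0\to Z^1]$, with $d$ as its boundary and the $Z^1$-action on $C^0/B^0$ of $\S$\ref{SS:braiding1}. Writing $n^{-1}m^{-1}nm$ for the commutator in $Z^1$, these are the commutator condition $d\{n_1,n_2\}=n_1^{-1}n_2^{-1}n_1n_2$ together with the two bilinearity (hexagon) conditions $\{n_1n_2,n_3\}=\{n_1,n_3\}^{n_2}\{n_2,n_3\}$ and $\{n_1,n_2n_3\}=\{n_1,n_3\}\{n_1,n_2\}^{n_3}$; symmetry then reads $\{n_1,n_2\}\{n_2,n_1\}=1$ and the Picard condition adds $\{n,n\}=1$. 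Note first that $(1,\{p_2,p_1\})$ really is a $0$-cochain (its $G_0$-component is $1$ and $\si\mapsto\{p_2(\si),p_1(\si)\}$ is pointed since the $p_i$ are), so the bracket is well defined into $C^0/B^0$.

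Next I would isolate the two elementary facts that make the verifications collapse to pointwise statements about the braiding of $\bbG$. The braiding correction terms occurring in the alternative product of $\S$\ref{SS:braiding0} and in the action formula of $\S$\ref{SS:braiding1} all involve brackets of the form $\{g,\cdot\}$ or $\{\cdot,g\}$, hence vanish on cochains of the special shape $(1,\vart)$. Using this together with the Peiffer identity one finds, in the group $C^0/B^0$ with the product of $\S$\ref{SS:braiding0}, the \emph{reversed} pointwise law $(1,\vart_1)\cdot(1,\vart_2)=(1,\vart_2\vart_1)$, and one finds that the $Z^1$-action degenerates to $(1,\vart)^{(p,\vep)}=(1,\vart^{p})$. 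This reversal is exactly what forces the transposition in $\{p_2,p_1\}$ and is the structural point behind the statement.

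With these facts the two bilinearity conditions reduce, pointwise in $\si$, to the hexagon identities $\{g,h_1h_2\}=\{g,h_2\}\{g,h_1\}^{h_2}$ and $\{g_1g_2,h\}=\{g_1,h\}^{g_2}\{g_2,h\}$ of the braiding on $\bbG$, once one recalls that the $G_0$-component of a product in $Z^1$ is the pointwise product of the $p_i$; these identities hold for any braided $\bbG$. The commutator condition splits into two components. Its $G_0$-component is automatic, because $\partial\{p_2(\si),p_1(\si)\}^{-1}=p_1(\si)^{-1}p_2(\si)^{-1}p_1(\si)p_2(\si)$ is precisely the $p$-component of the commutator $n_1^{-1}n_2^{-1}n_1n_2$. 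The map-component is the one real calculation: one expands the $\vep$-part of $d(1,\{p_2,p_1\})$ against the $\vep$-part of $n_1^{-1}n_2^{-1}n_1n_2$, the latter assembled from the product and inverse formulas of $\S$\ref{SS:braiding1} (both carrying braiding terms), and checks that the $\vep_1,\vep_2$ contributions cancel, leaving an identity in the bracket of $\bbG$. I expect this to be the main obstacle, and I expect it to be exactly where the symmetry $\{g,h\}\{h,g\}=1$ of $\bbG$ is needed --- consistent with the fact that for a merely braided $\bbG$ the cokernel $H^1(\Gamma,\bbG)$ is only a group, not an abelian one. The verification will also use the $\Gamma$-equivariance ${}^{\si}\!\{g,h\}=\{{}^{\si}\!g,{}^{\si}\!h\}$ and the $1$-cocycle relation for the $p_i$.

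Finally the symmetric and Picard refinements are immediate from the reversed-product law: $\{n_1,n_2\}\{n_2,n_1\}=(1,\{p_2,p_1\})\cdot(1,\{p_1,p_2\})=(1,\si\mapsto\{p_1(\si),p_2(\si)\}\{p_2(\si),p_1(\si)\})$, which is the identity exactly when the braiding of $\bbG$ is symmetric, while $\{n,n\}=(1,\si\mapsto\{p(\si),p(\si)\})$ is the identity exactly when it is Picard.
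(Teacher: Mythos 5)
Your route is a direct cocycle-level verification, which is genuinely different from the paper's: the paper gives no computation here at all, but instead obtains the bracket by descending the symmetric braiding $b$ of $\S$\ref{SS:symmetricbutterfly} on $\mfZ(\Gamma,\bbG)$ to the 2-group $[d\:C^0/B^0\to Z^1]$. Several of your reductions are correct and are exactly the right structural observations: the reversed pointwise law $(1,\vart_1)\cdot(1,\vart_2)=(1,\vart_2\vart_1)$ does follow from the Peiffer identity and the vanishing of $\{1,\cdot\}$ and $\{\cdot,1\}$, the action does degenerate to $(1,\vart)^{(p,\vep)}=(1,\vart^p)$, the $G_0$-component of the commutator condition is immediate, and the two hexagons do reduce pointwise to the hexagons of $\bbG$ (with the transposition in $\{p_2,p_1\}$ accounted for by the order reversal). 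This is a viable and more explicit alternative to the paper's argument.

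However, as it stands the proposal has two genuine gaps. First, the one computation you yourself identify as decisive --- the $G_1$-component of $d(1,\{p_2,p_1\})=n_1^{-1}n_2^{-1}n_1n_2$, where the symmetry of $\bbG$ must enter (it must, since for merely braided $\bbG$ the cokernel $H^1$ is not abelian, whereas the commutator axiom would force $\coker d$ abelian) --- is not carried out; ``I expect this to work'' is a plan, not a proof, and this is precisely the step where an error or a needed extra identity from $\S$\ref{SS:identities} could hide. Second, your checklist of braiding axioms is incomplete relative to the definition in Appendix B: a braided crossed-module must also satisfy $\{\partial\al,n\}=\al^{-1}\al^{n}$ and $\{n,\partial\al\}=(\al^{-1})^{n}\al$, and these are \emph{not} consequences of the commutator and hexagon axioms (one can build crossed-modules satisfying the latter three but admitting no bracket satisfying the former two). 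For the crossed-module at hand these axioms read, for $\al=(g,\theta)$ with arbitrary $g\in G_0$, as identities comparing $(1,\{p,p_{d(g,\theta)}\})$ with $(g,\theta)^{-1}\cdot(g,\theta)^{(p,\vep)}$, and here your ``collapse'' lemma does not apply: the braiding correction terms in the product of $\S$\ref{SS:braiding0} and in the action formula of $\S$\ref{SS:braiding1} involve $\{g,\cdot\}$ and $\{\cdot,g\}$ with $g$ nontrivial, so this check is roughly as laborious as the commutator computation and cannot be waved away.
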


The above braiding is obtained by unraveling the symmetry morphism
$b$ of $\S$\ref{SS:symmetricbutterfly}.

\begin{cor}
    When the braiding on $\bbG$ is symmetric, the group structure on
    $H^{1}(\Gamma,\bbG)$ is abelian.
\end{cor}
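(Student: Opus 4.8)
The plan is to deduce the corollary as a formal consequence of Lemma~\ref{L:symmetric} together with the general principle that the cokernel (equivalently, the $\pi_0$) of a braided crossed-module is abelian. The substantive computational work has already been absorbed into Lemma~\ref{L:symmetric}; what remains is a short structural argument.

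First I would recall from \S\ref{SS:braiding1} that $H^{1}(\Gamma,\bbG)$ is identified with the cokernel $\coker(d)$ of the crossed-module $[d \: C^{0}(\Gamma,\bbG)/B^{0}(\Gamma,\bbG) \to Z^{1}(\Gamma,\bbG)]$. Here the quotient $\coker(d)=Z^{1}(\Gamma,\bbG)/d\big(C^{0}(\Gamma,\bbG)/B^{0}(\Gamma,\bbG)\big)$ makes sense as a group because the image of $d$ is normal in $Z^{1}(\Gamma,\bbG)$, this being one of the crossed-module axioms, and the group operation on $H^{1}(\Gamma,\bbG)$ is the induced quotient operation. This identification is exactly what the preceding corollary (in the merely braided case) provides, and it is the structure I would exploit.

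Next I would invoke Lemma~\ref{L:symmetric}: since the braiding on $\bbG$ is symmetric, the crossed-module $[d \: C^{0}/B^{0} \to Z^{1}]$ is braided, with braiding $\{(p_1,\vep_1),(p_2,\vep_2)\}=(1,\{p_2,p_1\})$. The crucial point is the defining property of a braided crossed-module: applying $d$ to the braiding yields a commutator. Thus for $x=(p_1,\vep_1)$ and $y=(p_2,\vep_2)$ in $Z^{1}(\Gamma,\bbG)$, the element $d\{x,y\}$ equals the commutator of $x$ and $y$ (with the ordering fixed by our convention $\partial\{g,h\}=g^{-1}h^{-1}gh$). Consequently every commutator of elements of $Z^{1}(\Gamma,\bbG)$ lies in the image of $d$ and hence becomes trivial in $H^{1}(\Gamma,\bbG)=\coker(d)$, which is precisely the assertion that $H^{1}(\Gamma,\bbG)$ is abelian.

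The main (indeed the only genuine) obstacle is confined to Lemma~\ref{L:symmetric} itself, namely verifying that the proposed bracket $(1,\{p_2,p_1\})$ satisfies all the braided crossed-module axioms --- in particular the commutator identity above and the compatibility with the $Z^{1}$-action on $C^{0}/B^{0}$. Since that lemma may be assumed, the corollary requires no further computation: it is simply the specialization to the present crossed-module of the standard fact that $\pi_0$ of a braided crossed-module is abelian. I would note explicitly why symmetry (rather than mere braiding of $\bbG$) is needed: it is symmetry of the braiding on $\bbG$ that, via Lemma~\ref{L:symmetric}, upgrades $[d \: C^{0}/B^{0}\to Z^{1}]$ from a bare crossed-module to a \emph{braided} one, and it is that braiding which forces the commutators to die in the cokernel.
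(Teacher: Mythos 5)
Your proposal is correct and is exactly the argument the paper intends (the corollary is stated without proof, but its placement immediately after Lemma~\ref{L:symmetric} makes clear that the intended deduction is precisely yours): identify $H^{1}(\Gamma,\bbG)$ with $\coker(d)$ as in $\S$\ref{SS:braiding1}, use the lemma to equip $[d \: C^{0}/B^{0} \to Z^{1}]$ with a braiding, and observe that the braided crossed-module axiom $\partial\{x,y\}=x^{-1}y^{-1}xy$ forces every commutator of $Z^{1}(\Gamma,\bbG)$ into the image of $d$. Your remark that symmetry of the braiding on $\bbG$ is needed only to upgrade the crossed-module to a braided one is also the correct accounting of where the hypothesis enters.
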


\section{The nonabelian complex $\mcK(\Gamma, \bbG)$}{\label{S:B}}

For an abelian group $G$ with an action of $\Gamma$ one can find a
chain complex $\mcK(\Gamma, G)$ whose cohomologies are
$H^i(\Gamma,G)$. The corresponding statement is obviously not true
for a nonabelian $G$ (or a crossed-module $\bbG$). However, it seems to be
true ``as much as it can be''. More precisely, even though the
complex $\mcK(\Gamma, \bbG)$ does not exist,  truncated versions of
it exist. And ``the more abelian $\bbG$ is'' the longer and the more
abelian these truncations become. 

For example, for an arbitrary
$\bbG$, there is a crossed-module in groupoids $\mcK^{\leq
1}(\Gamma, \bbG)$, concentrated in degrees $[-1,1]$, whose
cohomologies are precisely $H^i(\Gamma,\bbG)$, $i=-1,0,1$. When
$\bbG$ is braided,  $\mcK^{\leq 1}(\Gamma, \bbG)$ is actually a
2-crossed-module; in fact, it can be extended one step further to a
2-crossed-module in groupoids $\mcK^{\leq 2}(\Gamma, \bbG)$ which is
concentrated in degrees $[-1,2]$.\footnote{We will not prove this
here.}  In the case where $\bbG$ is symmetric, $\mcK^{\leq
2}(\Gamma, \bbG)$ is expected to come from a  3-crossed-module. 
We are not able to prove
this here, but we prove the weaker statement that $\mcK^{\leq
1}(\Gamma, \bbG)$ is a braided 2-crossed-module.

\subsection{The case of arbitrary $\bbG$}{\label{SS:arbitrary}}

The crossed-module in groupoids $\mcK^{\leq 1}(\Gamma, \bbG)$ that
we will define below neatly packages everything we have discussed so
far in the preceding sections.

Let $\mcZ(\Gamma,\bbG)= [Z^1(\Gamma,\bbG)\times C^0(\Gamma,\bbG)
\sst{} Z^1(\Gamma,\bbG)]$ be the groupoid of 1-cocycles defined in
$\S$\ref{S:1}.  Recall that it is the action groupoid of the right
action of $C^0$ on $Z^1$ defined in $\S$\ref{S:1}.
We define $\mcK^{\leq 1}(\Gamma, \bbG)$ to be
  $$\mcK^{\leq 1} (\Gamma, \bbG):=
       [\coprod_{c \in Z^1} G_1(c) \llra{d} \mcZ(\Gamma,\bbG)].$$
Here $Z^1=Z^1(\Gamma,\bbG)$ and $G_1(c)=G_1$.

For every 1-cocycle $c \in Z^1(\Gamma,\bbG)$, the effect of the
differential $d$ on the corresponding component $G_1(c)$ of the
disjoint union $\coprod_{c \in Z^1} G_1(c)$ is defined by
        $$\mu \mapsto (\partial_{\mu},\theta_{\mu}).$$
(See $\S$\ref{S:0} for notation.) Here, we are thinking of $
(\partial_{\mu},\theta_{\mu})$
as an arrow in $\mcZ(\Gamma,\bbG)$ going from the object  $c$ to itself.

The right action of $\mcZ(\Gamma,\bbG)$ on $\coprod_{c \in Z^1}
G_1(c)$ is defined as follows. Let $c, c' \in Z^1$ be 1-cocycles,
and $(g,\theta) \in C^0$ an arrow between
them. Then, $(g,\theta) $ acts by
     $$G_1(c) \to G_1(c'),$$
   $$\mu \mapsto \mu^{(g,\theta)}:=\mu^g.$$

The following proposition is easy to prove.
\begin{prop}{\label{P:crossedmod}}
     For $i=-1,0,1$, we have
       $$H^i(\Gamma,\bbG)=H^i\mcK^{\leq 1}(\Gamma, \bbG).$$
\end{prop}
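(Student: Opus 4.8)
The plan is to unwind the definition of cohomology for a crossed-module in groupoids and show that each of the three groups $H^i\mcK^{\leq 1}(\Gamma,\bbG)$, as computed from the object $\mcK^{\leq 1}(\Gamma,\bbG)=[\coprod_{c\in Z^1}G_1(c)\llra{d}\mcZ(\Gamma,\bbG)]$, reproduces the corresponding $H^i(\Gamma,\bbG)$ defined explicitly in $\S$\ref{S:0}--\ref{S:1}. Recall that for a crossed-module in groupoids $[E \to \mcC]$ one sets $H^1$ to be the set of isomorphism classes of objects of the base groupoid $\mcC$ (here $\mcZ(\Gamma,\bbG)$), $H^0$ to be the group of automorphisms of the base point modulo the image of the differential $d$, and $H^{-1}$ to be the kernel of $d$ restricted to the automorphism group of the base point. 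So the whole proposition amounts to three essentially independent verifications, one for each degree.

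First I would treat $H^1$. The base groupoid $\mcZ(\Gamma,\bbG)$ is by construction the transformation groupoid of the right $C^0(\Gamma,\bbG)$-action on $Z^1(\Gamma,\bbG)$ from $\S$\ref{S:1}, so its set of isomorphism classes is exactly the orbit set, which is precisely how $H^1(\Gamma,\bbG)$ was defined as the pointed set of isomorphism classes of $\mcZ^1(\Gamma,\bbG)$. This identification is immediate. Second, for $H^0$ I would compute the automorphism group of the base object $c_0=(1_{G_0},1_{G_1})$ in $\mcZ(\Gamma,\bbG)$: an automorphism is a $(g,\theta)\in C^0$ fixing $c_0$, and reading off the two cocycle conditions from the arrow formulas in $\S$\ref{S:1} at $p_1=p_2=1$, $\vep_1=\vep_2=1$ shows that this stabilizer is exactly $Z^0(\Gamma,\bbG)$. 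The image of $d$ on the component $G_1(c_0)$ is, by the displayed formula $\mu\mapsto(\partial\mu,\theta_\mu)$, precisely the subgroup $B^0(\Gamma,\bbG)$ of coboundaries. Hence the degree-$0$ cohomology $\operatorname{Aut}(c_0)/\mathrm{im}\,d = Z^0/B^0 = H^0(\Gamma,\bbG)$.

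Third, for $H^{-1}$ I would take $\ker\big(d\colon G_1(c_0)\to \operatorname{Aut}(c_0)\big)$: an element $\mu\in G_1$ lies in this kernel iff $(\partial\mu,\theta_\mu)=(1_{G_0},1_{G_1})$, i.e.\ $\partial\mu=1$ and $\theta_\mu(\si)=\mu^{-1}\cdot{}^{\si}\!\mu=1$ for all $\si$. The first condition says $\mu\in\ker\partial$ and the second says $\mu$ is $\Gamma$-invariant, so the kernel is $(\ker\partial)^\Gamma$, which is $H^{-1}(\Gamma,\bbG)$ by definition. The only genuinely mechanical point in all three steps is to confirm that the stabilizer computation for $H^0$ really yields the two $0$-cocycle conditions and not something weaker; this is where I would be most careful, but it is a direct substitution rather than a real obstacle. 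Indeed the main (and only) subtlety of the whole argument is bookkeeping: one must make sure the action, the differential, and the group structures are the same objects in both the explicit and the packaged descriptions, which is exactly what the construction of $\mcK^{\leq 1}(\Gamma,\bbG)$ was rigged to guarantee—hence the author's remark that the proposition is easy.
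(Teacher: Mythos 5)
Your proof is correct and is exactly the direct unwinding the paper has in mind: the paper itself offers no proof, merely declaring the proposition ``easy to prove,'' and your three verifications (orbit set for $H^1$, stabilizer of the trivial cocycle equals $Z^0$ and image of $d$ equals $B^0$ for $H^0$, kernel of $d$ equals $(\ker\partial)^\Gamma$ for $H^{-1}$) are precisely the intended computation, using the definition of cohomology of a crossed-module in groupoids from $\S$\ref{SS:cohs}.
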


\subsection{The case of braided $\bbG$}{\label{SS:braided}}

In the case where $\bbG$ is endowed with  a $\Gamma$-equivariant
braiding, it follows from Lemma \ref{L:coincide} that the
crossed-module in groupoids $\mcK^{\leq 1}(\Gamma, \bbG)$ comes from
(see $\S$\ref{A:1}) the  2-crossed-module 
  $$[C^{-1}(\Gamma,\bbG)
      \llra{d} C^0(\Gamma,\bbG) \llra{d} Z^1(\Gamma,\bbG)],$$ 
 where $C^{-1}(\Gamma,\bbG):=G_1$.  
(Note that the group structure on $C^0(\Gamma,\bbG)$ is the one
defined in $\S$\ref{SS:braiding0}.) The boundary maps $d$ are the
ones defined in $\S$\ref{S:0} and $\S$\ref{S:1}. That is
    $$C^{-1}(\Gamma,\bbG) \llra{d} C^0(\Gamma,\bbG),$$
       $$\mu \mapsto  (\partial_{\mu},\theta_{\mu}),$$
and
     $$C^{0}(\Gamma,\bbG) \llra{d} Z^1(\Gamma,\bbG),$$
       $$(g,\theta) \mapsto (p,\vep),$$
where
       $$p(\si):=g^{-1}\cdot{}^{\si}\!g\cdot\partial\theta(\si)^{-1},$$
       $$\vep(\si,\tau):=\theta(\si\tau)
             \cdot\big(\theta(\si)^{-1}\big)^{({}^{\si}\!g^{-1}\cdot{}^{\si\tau}\!g)}
                   \cdot{}^{\si}\!\theta(\tau)^{-1}.$$

The action of $Z^1(\Gamma,\bbG)$ on $C^{-1}(\Gamma,\bbG)$ is defined to be the trivial one.
The action of  $Z^1(\Gamma,\bbG)$ on $C^0(\Gamma,\bbG)$ is defined to be the one of
$\S$ \ref{SS:braiding1}. Namely,  
   $$(g,\theta)^{(p,\vep)}:=(g,\vart),$$
where $\vart \: \Gamma \to G_1$ is defined by
    $$\vart(\si)=\theta(\si)^{p(\si)}\cdot\{p(\si),
    {}^{\si}\!g\}\cdot\{g,p(\si)\}^{g^{-1}\cdot{}^{\si}\!g}.$$
The action of  $C^0(\Gamma,\bbG)$ on $C^{-1}(\Gamma,\bbG)$ is defined by
   $$\mu^{(g,\theta)}:=\mu^g.$$
Finally, the bracket 
  $$\{,\} \: C^0(\Gamma,\bbG) \times C^0(\Gamma,\bbG) \to C^{-1}(\Gamma,\bbG)$$
is defined by
  $$\{(g_1,\theta_1),(g_2,\theta_2)\}:=\{g_1,g_2\}.$$

By abuse of notation, we denote the above 2-crossed-module again by
$\mcK^{\leq 1}(\Gamma, \bbG)$. By  Proposition \ref{P:crossedmod},
the cohomologies of this 2-crossed-module are naturally isomorphic
to $H^i(\Gamma,\bbG)$,
$i=-1,0,1$.

\subsection{The case of symmetric $\bbG$}{\label{SS:symmetric}}

In the case where the braiding on $\bbG$ is symmetric, the
2-crossed-module $\mcK^{\leq 1}(\Gamma, \bbG)$ is braided in the sense of
$\S$\ref{A:1}. We prove this using the following lemma.

\begin{lem}{\label{L:barided2cm}}
 Let $\mathbb{C}=[K \llra{\partial} L \llra{\partial} M]$ be a 
 2-crossed-module such that the action
 of $M$ on $K$ is trivial and the bracket $\{,\}\: L\times L \to K$ is symmetric 
 (i.e., $\{g,h\}\{h,g\}=1$, for every $g,h \in L$). Assume that we are given a 
 bracket $\{,\} \: M\times M \to L$ which satisfies the following conditions:
   \begin{itemize}
       \item for every $x,y \in M$, $\partial\{x,y\}=x^{-1}y^{-1}xy$,
       \item for every $g \in L$ and $x \in M$, $\{\partial g,x\}=g^{-1}g^x$
               and  $\{x,\partial g\}=(g^{-1})^xg$.
   \end{itemize}
   With the notation of $\S$\ref{A:1},
   let the brackets $\{,\}_{(1,0)(2)}$,  $\{,\}_{(2,0)(1)}$, $\{,\}_{(0)(2,1)}$, $\{,\}_{(0)(2)}$
   be the trivial ones (i.e., their value is always $1$). Let $\{,\}_{(2)(1)} \: L\times L \to K$ 
   be the given bracket of $\mathbb{C}$, and define 
   $\{,\}_{(1)(0)} \: L\times L \to K$   by   $\{g,h\}_{(1)(0)}:=\{h,g\}_{(2)(1)}$. 
   Then, $\mathbb{C}$
   is a braided 2-crossed-module in the sense of $\S$\ref{A:1}.
\end{lem}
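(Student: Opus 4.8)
The plan is to verify directly the axioms that $\S$\ref{A:1} requires of a braided 2-crossed-module. The underlying 2-crossed-module axioms hold by hypothesis, since $\mathbb{C}=[K \to L \to M]$ is assumed to be a 2-crossed-module; what remains is to check the additional axioms governing the braiding $\{,\} \: M\times M \to L$ and the six brackets named in the statement. Four of these brackets are trivial and a fifth, $\{,\}_{(1)(0)}$, is the transpose of the Peiffer lifting $\{,\}_{(2)(1)}$, so most of the additional axioms reduce to trivial identities and the substance of the proof is confined to a short list of surviving ones.

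I would begin by recording the two facts that carry the argument. First, triviality of the action of $M$ on $K$ annihilates every term in which an element of $M$ acts on $K$, so all coherence conditions relating the (trivial) auxiliary brackets to that action hold automatically. Second, the symmetry hypothesis $\{g,h\}\{h,g\}=1$ on the Peiffer lifting gives
\[\{g,h\}_{(1)(0)}=\{h,g\}_{(2)(1)}=\big(\{g,h\}_{(2)(1)}\big)^{-1},\]
so that $\{,\}_{(1)(0)}$ and $\{,\}_{(2)(1)}$ are mutually inverse. This is precisely the relation the coherence axioms of $\S$\ref{A:1} need in order for the two top brackets to fit together, and it is where the symmetry hypothesis is consumed.

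Next I would sort the surviving axioms into two families. Those involving only the braiding $\{,\} \: M\times M \to L$ and the boundary $\partial \: L \to M$ amount to the statement that $[L \to M]$ is a braided crossed-module, and these are exactly the three displayed hypotheses $\partial\{x,y\}=x^{-1}y^{-1}xy$, $\{\partial g,x\}=g^{-1}g^x$, and $\{x,\partial g\}=(g^{-1})^x g$; they therefore hold by assumption. The remaining family consists of the mixed coherence axioms that couple the $M$-braiding to the Peiffer lifting $\{,\}_{(2)(1)}$ through $\partial$, and this is where the genuine computation lies.

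The hard part will be this last family. My strategy would be to expand each such axiom, replace every occurrence of $\{,\}_{(1)(0)}$ by the transpose $\{h,g\}_{(2)(1)}$, and use $\{\partial g,x\}=g^{-1}g^x$ and $\{x,\partial g\}=(g^{-1})^x g$ to rewrite the braiding of any boundary element as an action term; the identity then lives entirely in $K$, where it closes up using the symmetry $\{g,h\}\{h,g\}=1$ together with the 2-crossed-module axioms already available for $\mathbb{C}$. I expect the symmetry to be indispensable at exactly the points where a bracket must cancel against its transpose, so that dropping it would break these axioms. None of the individual verifications is conceptually deep; the care lies in keeping track of the $L$- and $M$-actions and of the order of the two arguments in each bracket.
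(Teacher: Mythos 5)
Your proposal is a plan rather than a proof: every axiom that actually requires checking is deferred to a strategy described in the future tense, and none of the verifications is carried out. More importantly, the plan locates the difficulty in the wrong place. The paper's proof observes that all of the axioms $(\mathbf{3CM1})$--$(\mathbf{3CM18})$ of (\cite{ArKuUs}, Definition 8) follow trivially from the hypotheses \emph{except one}, namely $\mathbf{3CM6}$, which is an identity involving only the given Peiffer lifting $\{,\}_{(2)(1)}\colon L\times L\to K$ and the action of $L$ on $K$ --- not the new bracket $M\times M\to L$ at all. It demands $\{g,h\}^{[g,h]}=\{g,h\}$ for all $g,h\in L$, where $[g,h]=g^{-1}h^{-1}gh$. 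The ``mixed coherence axioms coupling the $M$-braiding to the Peiffer lifting through $\partial$'', which you single out as the locus of the genuine computation, are among the ones that hold trivially once the four auxiliary brackets are set to $1$ and the $M$-action on $K$ is trivial; conversely, the axiom that does need an argument is not on your list.

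The missing computation is short but is the entire content of the proof: the 2-crossed-module identity $\partial\{g,h\}=g^{-1}h^{-1}gh^{\partial g}$ gives $[g,h]=\partial\{g,h\}\cdot(h^{-1})^{\partial g}h$; the factor $\partial\{g,h\}$ acts on $\{g,h\}$ by conjugation by $\{g,h\}$ (the Peiffer identity in $[K\to L]$), hence fixes it, and the remaining factor $(h^{-1})^{\partial g}h$ acts trivially because the action of $M$ on $K$ is trivial and the structure maps are $M$-equivariant, so $k^{\ell^{x}}=k^{\ell}$ for $k\in K$, $\ell\in L$, $x\in M$. Without identifying $\mathbf{3CM6}$ and supplying this argument the proof is incomplete. (Your observation that symmetry forces $\{g,h\}_{(1)(0)}=\{g,h\}_{(2)(1)}^{-1}$ is correct and is indeed how the symmetry hypothesis enters the ``trivial'' axioms, but it does not substitute for the verification above, which in fact uses the triviality of the $M$-action on $K$ rather than the symmetry.)
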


\begin{proof}
  All axioms ($\mathbf{3CM1}$)-($\mathbf{3CM18}$)  
  of (\cite{ArKuUs}, Definition 8) follow trivially from our assumptions, except for 
  $\mathbf{3CM6}$. For this, we must prove that 
        $$\{g,h\}^{[g,h]}=\{g,h\}$$ 
  for every $g,h \in L$.
  Here, $[g,h]:=g^{-1}h^{-1}gh$. We have, $[g,h]=\partial\{g,h\}(h^{-1})^{\partial g}h$.
  Note that the assertion is true if we replace $[g,h]$ by $\partial\{g,h\}$. So we have to show
  that $\{g,h\}^{(h^{-1})^{\partial g}h}=\{g,h\}$. This is true 
  because the action of $M$ on $K$ is trivial and $\partial \: K \to L$ is $M$-equivariant.
\end{proof}

\begin{rem}
  Note that we are using modified versions of axioms of (\cite{ArKuUs}, 
  Definition 8) because  our conventions for the actions (left or right) and the brackets, hence also 
  our 2-crossed-module axioms, are different from those of {\em loc.~cit}.
\end{rem}  

Now,  in the above lemma take $\mathbb{C}$ to be $\mcK^{\leq 1}(\Gamma, \bbG)$.
Let
  $$\{,\} \: Z^1(\Gamma,\bbG) \times Z^1(\Gamma,\bbG) \to C^0(\Gamma,\bbG).$$
be the braiding  defined  in Lemma \ref{L:symmetric}. Namely,
    $$\{(p_1,\vep_1),(p_2,\vep_2)\}:=(1,\{p_2,p_1\}).$$
Here, $\{p_1,p_2\} \: \Gamma \to G_1$
is the pointwise bracket of the maps $p_1,p_2 \: \Gamma \to G_0$. It is not difficult to check
that this bracket satisfies the two conditions of the above lemma. This endows 
$\mcK^{\leq 1}(\Gamma, \bbG)$ with the structure of a braided 2-crossed-module.

\subsection{Invariance under equivalence}{\label{SS:invariance}}

The crossed-module in groupoids $\mcK^{\leq 1}(\Gamma, \bbG)$ is
clearly functorial in (strict) $\Gamma$-equivariant morphisms $f \:
\bbH \to \bbG$ of crossed-modules. (Also, in the braided case, the
2-crossed-module $\mcK^{\leq 1}(\Gamma, \bbG)$ is functorial in
strict $\Gamma$-equivariant braided morphisms of crossed-modules.)
In this subsection, we prove that if $f$ is an equivalence of
crossed-modules (i.e., induces isomorphisms
on cohomologies), then the induced morphism
    $$f_* \:  \mcK^{\leq 1}(\Gamma, \bbH) \to \mcK^{\leq 1}(\Gamma, \bbG)$$
is also an equivalence  (i.e., induces isomorphisms on
cohomologies).

We begin by a definition. Let
  $$\bbG=[\partial \: G_1 \to G_0]$$
be a crossed-module, and $p \: G_0' \to G_0$  a group homomorphism
from a certain group $G_0'$. Let $G_1' := G_1\times_{G_0}G_0'$ be
the fiber product. There is a natural crossed-module structure on
     $$\bbG':=[\pr_2 \: G'_1 \to G'_0].$$
We call this the
{\em pullback} crossed-module via $p$ and denote it by $p^*\bbG$.
We have a natural projection $P \: \bbG' \to \bbG$.

The next lemmas will be used in the proof of Proposition
\ref{P:equivalence}.

\begin{lem}{\label{L:pullback}}
   Notation being as above, assume that  the images of $p$ and
   $\partial$ generate $G_0$. Then  $P \: \bbG' \to \bbG$ is an
   equivalence of crossed-modules. Conversely, if $P \: \bbG' \to
   \bbG$ is an equivalence of crossed-modules, then the  images of
   $P_0 \: G'_0 \to G_0$ and $\partial$ generate $G_0$, and $\bbG'$
   is naturally isomorphic to $P_0^*\bbG$.
\end{lem}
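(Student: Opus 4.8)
The plan is to reduce both assertions to the two invariants $\ker\partial$ and $\coker\partial = G_0/\partial G_1$ of a crossed-module, since a morphism of crossed-modules is an equivalence exactly when it induces isomorphisms on both. First I would compute these invariants for $\bbG'$ together with the maps induced by $P$. On kernels, $\ker\pr_2 = \{(\alpha,1) : \partial\alpha = 1\}$ and $P_1 = \pr_1$ restricts to the bijection $(\alpha,1)\mapsto\alpha$ onto $\ker\partial$, so $P$ induces an isomorphism $\ker\pr_2 \risom \ker\partial$ with no hypothesis at all. On cokernels, $\pr_2(G_1') = p^{-1}(\partial G_1)$, whence $\coker\pr_2 = G_0'/p^{-1}(\partial G_1)$, and $P_0 = p$ induces the map sending the class of $g'$ to the class of $p(g')$. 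This map is always injective, and its image is the image of $p(G_0')$ in $G_0/\partial G_1$; it is therefore surjective precisely when $p(G_0')\cdot\partial G_1 = G_0$, i.e.\ when the images of $p$ and $\partial$ generate $G_0$ (recall $\partial G_1\trianglelefteq G_0$, so this product is the subgroup they generate). These two computations give the forward implication immediately.

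For the converse I would treat $P\: \bbG'\to\bbG$ as an arbitrary equivalence, writing $\bbG' = [\partial'\: G_1'\to G_0']$ and $P_0 = p$. The generation statement is then nothing but surjectivity of the induced map on cokernels, already analysed above. The substantive claim is the natural isomorphism $\bbG'\cong P_0^*\bbG$. I would define $\Phi\: \bbG'\to P_0^*\bbG$ over $G_0'$ (that is, $\Phi_0 = \id$) by
$$\Phi_1\: G_1'\to G_1\times_{G_0}G_0', \qquad \alpha'\mapsto\big(P_1(\alpha'),\,\partial'(\alpha')\big),$$
which lands in the fibre product because $P$ is a morphism (so $\partial P_1 = p\,\partial'$), is a morphism of crossed-modules over $G_0'$ since $\pr_2\circ\Phi_1 = \partial'$, and is compatible with the projections to $\bbG$; this compatibility is the sense in which the isomorphism is natural.

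The crux is to show that $\Phi_1$ is bijective, and this is where the equivalence hypothesis enters. Injectivity is quick: $\Phi_1(\alpha') = (1,1)$ forces $\alpha'\in\ker\partial'$ with $P_1(\alpha') = 1$, and the induced isomorphism on kernels then gives $\alpha' = 1$. Surjectivity is the step I expect to be the main obstacle, and I would handle it by a two-stage chase. Given $(\beta,g')$ with $\partial\beta = p(g')$, the element $p(g') = \partial\beta$ is trivial in $\coker\partial$, so by the cokernel isomorphism $g'$ is trivial in $\coker\partial'$, say $g' = \partial'(\gamma')$. Then $\partial(P_1\gamma') = p(g') = \partial\beta$, so $\beta\,P_1(\gamma')^{-1}\in\ker\partial$; using surjectivity of the kernel isomorphism I can choose $\delta'\in\ker\partial'$ with $P_1(\delta') = \beta\,P_1(\gamma')^{-1}$, and then $\alpha' := \delta'\gamma'$ satisfies $\partial'(\alpha') = g'$ and $P_1(\alpha') = \beta$, i.e.\ $\Phi_1(\alpha') = (\beta,g')$. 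The only delicacy is the order of the corrections—first kill the cokernel obstruction to reach the correct $G_0'$-component, then adjust the $G_1$-component by a kernel element—so that each of the two induced isomorphisms is invoked exactly where it applies. The remaining verifications (the kernel and cokernel identifications, well-definedness of $\Phi$ as a morphism over $G_0'$, and the generation condition) are routine.
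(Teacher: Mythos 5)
Your proof is correct. The paper dismisses this lemma with ``Easy'' and gives no argument, so there is nothing to diverge from: your reduction to the induced maps on $\ker\partial$ and $\coker\partial$ (the paper's stated definition of equivalence), the observation that the kernel map is always an isomorphism while surjectivity of the cokernel map is exactly the generation condition (using normality of $\partial G_1$), and the two-stage diagram chase proving bijectivity of $\alpha'\mapsto\big(P_1(\alpha'),\partial'(\alpha')\big)$ for the converse, together constitute the standard argument the author evidently had in mind, with all details correctly supplied.
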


\begin{proof}
  Easy.
\end{proof}

\begin{lem}{\label{L:surj}}
     Let $F \: \bbH \to \bbG$ be an equivalence of
     $\Gamma$-crossed-modules. Then, there is a commutative diagram
     $$\xymatrix{ \bbH \ar@/^1pc/ [rr]^F  & \bbH' \ar[r]_{F'}
     \ar[l]^P& \bbG}$$ of equivalences of $\Gamma$-crossed-modules
     such that $P_0$ and $F'_0$ are surjective. In particular, by
     Lemma \ref{L:pullback}, $\bbH'$ is naturally isomorphic to both
     $P_0^*\bbH$ and $(F'_0)^{*}\bbG$.
\end{lem}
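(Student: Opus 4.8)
The plan is to build the intermediate $\Gamma$-crossed-module $\bbH'$ as a pullback of $\bbH$ along a surjection, arranged so that the second leg to $\bbG$ is surjective as well. Write $\bbH=[\partial_H\:H_1\to H_0]$, $\bbG=[\partial_G\:G_1\to G_0]$ and $F=(F_0,F_1)$. Since $F$ is an equivalence it induces isomorphisms on $\ker\partial$ and on $\coker\partial$; in particular $F_0$ is onto $\coker\partial_G$, so the images of $F_0$ and $\partial_G$ generate $G_0$. I let $H_0':=H_0\ltimes G_1$ be the semidirect product in which $H_0$ acts on $G_1$ through $F_0$ and the given right $G_0$-action, i.e. $\alpha^h:=\alpha^{F_0(h)}$, so that $(h_1,\alpha_1)(h_2,\alpha_2)=(h_1h_2,\alpha_1^{F_0(h_2)}\alpha_2)$. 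The diagonal action ${}^{\si}(h,\alpha)=({}^{\si}h,{}^{\si}\alpha)$ is by automorphisms, using $\Gamma$-equivariance of $F_0$ and the compatibility ${}^{\si}(\alpha^g)=({}^{\si}\alpha)^{{}^{\si}g}$. I then set $\bbH':=P_0^*\bbH$, the pullback along the (surjective, $\Gamma$-equivariant) projection $P_0\:H_0'\to H_0$, $(h,\alpha)\mapsto h$.

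Next I produce the two legs. Let $P\:\bbH'\to\bbH$ be the canonical projection; since $P_0$ is surjective, Lemma \ref{L:pullback} shows $P$ is an equivalence. For the other leg, define $F_0'\:H_0'\to G_0$ by $(h,\alpha)\mapsto F_0(h)\partial_G\alpha$. Using $\partial_G(\alpha^{F_0(h)})=F_0(h)^{-1}\partial_G(\alpha)F_0(h)$ one checks $F_0'$ is a homomorphism; it is surjective because $F_0$ and $\partial_G$ generate $G_0$, and it is $\Gamma$-equivariant since $F_0$ and $\partial_G$ are. Writing the fibre product as $H_1'=\{(\beta,\alpha):\beta\in H_1,\ \alpha\in G_1\}$ (with $H_0'$-coordinate $(\partial_H\beta,\alpha)$), define $F_1'\:H_1'\to G_1$ by $(\beta,\alpha)\mapsto F_1(\beta)\alpha$. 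A short computation with the Peiffer identity $\alpha^{\partial_G\gamma}=\gamma^{-1}\alpha\gamma$ shows $F_1'$ is a homomorphism compatible with the boundary maps, so $F'=(F_0',F_1')\:\bbH'\to\bbG$ is a morphism of $\Gamma$-crossed-modules with $F_0'$ surjective.

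It remains to see $F'$ is an equivalence. Here $\partial'(\beta,\alpha)=(\partial_H\beta,\alpha)$, so $\ker\partial'\cong\ker\partial_H$ via $\beta\mapsto(\beta,1)$, and $F_1'(\beta,1)=F_1(\beta)$; thus $F'$ induces on kernels exactly the isomorphism coming from $F$. On cokernels, $\partial'(H_1')=\partial_H(H_1)\ltimes G_1$, whence $\coker\partial'\cong\coker\partial_H$, and under this identification $F_0'$ induces $[h]\mapsto[F_0(h)]$, again the isomorphism coming from $F$. So both $P$ and $F'$ are equivalences with surjective object-maps, and the final ``in particular'' follows from Lemma \ref{L:pullback}. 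The one point needing care is the meaning of commutativity: since $\operatorname{im}(F_0)$ need not equal $G_0$, the triangle cannot commute strictly, because $F_0\circ P_0$ has image inside $\operatorname{im}(F_0)$ while $F_0'$ is onto. What does hold is that $F\circ P$ and $F'$ are homotopic, via the derivation $H_0'\to G_1$, $(h,\alpha)\mapsto\alpha$; equivalently the triangle commutes in the homotopy category, which is all that is needed since homotopic morphisms agree on cohomology. I expect this homotopy-commutativity, rather than any of the routine group-theoretic verifications, to be the only genuinely delicate part.
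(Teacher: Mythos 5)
Your construction is essentially the paper's own: the paper likewise forms $H_0\ltimes G_1$ (with $H_0$ acting on $G_1$ through $F_0$), takes $H_1\times G_1$ in degree $1$, and defines $P=(\pr_1,\pr_1)$ and $F'=(\pr_2,\rho)$ with $\rho(h,\al)=F_0(h)\partial_{\bbG}(\al)$; the only cosmetic difference is that the paper twists the boundary map, setting $\partial(\be,\al)=(\partial_{\bbH}\be,\,F_1(\be)^{-1}\al)$, so that its $\bbH'$ is identified with your $P_0^*\bbH$ via $(\be,\al)\mapsto(\be,F_1(\be)\al)$. Your closing observation is correct and is in fact more careful than the paper's ``it is easy to verify'': in either presentation $F_0\circ P_0$ and $F'_0$ differ by $\partial_{\bbG}$ of the $G_1$-coordinate, so the triangle commutes only up to the $\Gamma$-equivariant homotopy $(h,\al)\mapsto\al$ (which you exhibit correctly), and indeed it cannot commute strictly with $F'_0$ surjective unless $F_0$ already is; the lemma's ``commutative diagram'' must be read as $2$-commutative. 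Since homotopic strict morphisms induce equal maps on the $H^i$, this is all that is needed for the reduction step in Proposition \ref{P:equivalence}, so your proof is complete.
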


\begin{proof}
  Consider the right action of $H_0$ on $G_1$ via $F_0 \: H_0 \to
  G_0$, and form the semidirect product $H_0\ltimes G_1$. It acts on
  $H_1\times G_1$ on the right by the rule
  $$(\be,\al)^{(h,\gamma)}:=(\be^h,\gamma^{-1}\al^{P_0(h)}\gamma).$$
  With this action, we obtain a $\Gamma$-crossed-module
  $$\bbH':=[\partial \: H_1\times G_1 \to H_0\ltimes G_1],$$
  $$\partial(\be,\al):=(\partial_{\bbH}\be, F_1(\be^{-1})\al).$$ We
  define  $P \: \bbH' \to \bbH$ to be the first projection map
  $(\pr_1,\pr_1)$, and  $F' \: \bbH' \to \bbG$ to be $(\pr_2,\rho)$,
  where $\rho \: H_0\ltimes G_1 \to G_0$ is defined by $(h,\al)
  \mapsto P_0(h)\partial_{\bbG}\al$.
  It is easy to verify that $P$ and $F'$ satisfy the desired properties.
\end{proof}

We now come to the proof of invariance of  $\mcK^{\leq 1}(\Gamma,
\bbG)$ under equivalences.

\begin{prop}{\label{P:equivalence}}
      Let $f \: \bbH \to \bbG$ be a $\Gamma$-equivariant morphism of
      crossed-modules which is  an equivalence (i.e., induces
      isomorphisms on $\ker\partial$ and $\coker\partial$). Then,
      $$f_* \:  \mcK^{\leq 1}(\Gamma, \bbH) \to \mcK^{\leq
      1}(\Gamma, \bbG)$$ is  an equivalence of crossed-modules in
      groupoids  (i.e., induces isomorphisms on cohomologies). In
      particular, the induced maps $H^i(\Gamma,\bbH) \to H^i(\Gamma,\bbG)$
      are isomorphisms for $i=-1,0,1$.
\end{prop}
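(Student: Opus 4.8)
\medskip
The plan is to reduce the assertion to the single case of the canonical projection $P\:\bbG'=p^*\bbG\to\bbG$ from a pullback crossed-module along a surjection $p\:G_0'\to G_0$, and to settle that case directly. For the reduction, apply Lemma \ref{L:surj} to factor $f$ through $\bbH\xleftarrow{P}\bbH'\xrightarrow{F'}\bbG$ with $F'=f\circ P$ and with $P_0,F'_0$ surjective. Applying the functor $\mcK^{\leq 1}(\Gamma,-)$ yields $F'_*=f_*\circ P_*$, so by the two-out-of-three property of isomorphisms (applied on each $H^i$) it suffices to know that $P_*$ and $F'_*$ induce isomorphisms on all $H^i$. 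By the last sentence of Lemma \ref{L:surj} there are $\Gamma$-equivariant isomorphisms $\bbH'\cong P_0^*\bbH$ and $\bbH'\cong (F'_0)^*\bbG$ compatible with the projections, and since $\mcK^{\leq 1}(\Gamma,-)$ carries isomorphisms to isomorphisms, both $P$ and $F'$ are reduced to the pullback case above.

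So suppose $P\:\bbG'=p^*\bbG\to\bbG$ with $p$ surjective. Recall from the construction of $\mcK^{\leq 1}$ and Proposition \ref{P:crossedmod} that $H^1=\pi_0\mcZ(\Gamma,\bbG)$, while $H^0$ and $H^{-1}$ are respectively the cokernel and the kernel of the coboundary $d\:G_1\to \operatorname{Aut}_{\mcZ}(c_0)=Z^0(\Gamma,\bbG)$ at the basepoint $c_0=(1,1)$. Thus I must check: (i) $P_*\:Z^1(\Gamma,\bbG')\to Z^1(\Gamma,\bbG)$ is surjective and every isomorphism between two images lifts to an isomorphism of the originals (giving a bijection on $\pi_0=H^1$); (ii) $P_*$ is an isomorphism on $H^0=Z^0/B^0$; (iii) $P_*$ is an isomorphism on $H^{-1}$. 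Part (iii) is immediate: $P$ is an equivalence of crossed-modules (Lemma \ref{L:pullback}), so $\pr_1$ restricts to a $\Gamma$-equivariant isomorphism $\ker\pr_2\risom\ker\partial$, which in turn restricts to an isomorphism on $\Gamma$-invariants.

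The computational engine for (i) and (ii) is the fiber-product description $G_1'=G_1\times_{G_0}G_0'\subseteq G_1\times G_0'$: an identity in $G_1'$ holds if and only if it holds after applying both $\pr_1$ and $\pr_2$. Each lifting problem is then solved uniformly: lift the required $G_0$- or $G_0'$-datum along the surjection $p$, and define the missing $G_1'$-component by reading the relevant cocycle, coboundary, or arrow equation in the $\pr_2$-direction; the membership condition $\partial\alpha=p(h)$ needed for a pair $(\alpha,h)$ to lie in $G_1'$ is exactly the corresponding relation over $\bbG$. To verify the resulting equation in $G_1'$ one checks its two projections: the $\pr_1$-projection is the known statement over $\bbG$, and the $\pr_2$-projection is a purely group-theoretic identity in $G_0'$ forced by the definition of the $G_0'$-component (for a $1$-cocycle, say, the needed $\pr_2$-identity reduces, after substituting $k(\si,\tau)=p'(\si\tau)^{-1}p'(\si)\,{}^{\si}\!p'(\tau)$, to an automatic cancellation). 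This settles surjectivity of $P_*$ on $Z^1$ and on $Z^0$ at once.

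The main obstacle is the injective half of (i) and (ii): showing that a cocycle over $\bbG'$ whose image is a coboundary is itself a coboundary, and that cocycles over $\bbG'$ with isomorphic images are themselves isomorphic. Here one must actually exhibit the witnessing datum — an element $\mu'\in G_1'$ in the coboundary case, an arrow $(g',\theta')\in C^0(\Gamma,\bbG')$ in the isomorphism case — and confirm that its prescribed $G_1$- and $G_0'$-components are compatible. This is exactly where the two-projection principle does the real work: the $\pr_1$-component is dictated by the given coboundary or arrow over $\bbG$, the $\pr_2$-component is dictated by the boundary equation in $G_0'$, and their agreement is what must, and does, hold because both sides reduce to relations already available. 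Once these lifts are produced, (i) and (ii) follow, completing the pullback case and hence the proposition.
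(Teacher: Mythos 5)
Your proposal is correct and follows essentially the same route as the paper: reduce via Lemma \ref{L:surj} and Lemma \ref{L:pullback} to the case of the canonical projection from a pullback along a surjection, then verify the cohomology isomorphisms by explicit lifting, using the fiber-product description of $G_1'$ to check identities componentwise. The only cosmetic difference is that the paper packages the $H^{0}$ and $H^{-1}$ computations into a single cartesian square of crossed-modules $[H_0\times_{G_0}G_1\to Z^0(\Gamma,\bbH)]\to[G_1\to Z^0(\Gamma,\bbG)]$ and invokes Lemma \ref{L:pullback} again, whereas you treat them by direct lifting; both work.
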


\begin{proof}
   By Lemma \ref{L:surj}, we may assume that $f_0 \: H_0 \to G_0$ is
   surjective. Therefore, by Lemma \ref{L:pullback}, we may assume
   that $\bbH$ is the pullback of $\bbG$ along $f_0$. That is,
   $\bbH=[\pr_1 \: H_0\times_{G_0}G_1\to H_0]$ and $f$ is $(\pr_2,
   f_0) \: [H_0\times_{G_0}G_1\to H_0] \to [G_1 \to G_0]$. We
   calculate $\mcK^{\leq 1}(\Gamma, \bbH)$ explicitly and show that
   $f_*$ induces isomorphisms on cohomologies.

   By definition ($\S$\ref{SS:arbitrary}), we have
     $$\mcK^{\leq 1} (\Gamma, \bbH)=
          \coprod_{c \in Z^1} H_0\times_{G_0}G_1(c) \llra{d} \mcZ(\Gamma,\bbH),$$
   where
     $$\mcZ(\Gamma,\bbH)=
    [Z^1(\Gamma,\bbH)\times C^0(\Gamma,\bbH) \sst{} Z^1(\Gamma,\bbH)].$$
    We calculate $Z^1(\Gamma,\bbH)$ and $C^0(\Gamma,\bbH)$ as follows.
    An element in $Z^1(\Gamma,\bbH)$
    is a pair $(p,\vep)$ where
      $$p\: \Gamma \to H_0 \ \ \text{and} \ \ \vep \:
                                  \Gamma \times \Gamma \to G_1$$
    are pointed set maps satisfying the following conditions:
 \begin{itemize}
     \item for every
        $\si, \tau \in \Gamma$,
            $f_0p(\si\tau)\cdot\partial\vep(\si,\tau)=
           f_0p(\si)\cdot{}^{\si}\!f_0p(\tau)$,
     \item for every $\si, \tau, \ups \in \Gamma$,
          $\vep(\si,\tau\ups)\cdot{}^{\si}\!\vep(\tau,\ups)=
           \vep(\si\tau,\ups)\cdot\vep(\si,\tau)^{{}^{\si\tau}\!p(\ups)}$.
 \end{itemize}
An element in    $C^0(\Gamma,\bbH)$ is a triple
$\big(h,(\theta_1,\theta_2)\big)$, with $h \in H_0$, $\theta_1 \:
\Gamma \to H_0$, and $\theta_2 \: \Gamma \to G_1$ pointed set maps
such that  $f_0\theta_1=\partial\theta_2$.

The map of groupoids $\mcZ(\Gamma, \bbH)\to \mcZ(\Gamma, \bbG)$ is
the one induced by the following maps:
      $$Z^1(\Gamma,\bbH) \to Z^1(\Gamma,\bbG),$$
        $$(p,\vep) \mapsto (f_0p,\vep),$$
       $$C^0(\Gamma,\bbH) \to C^0(\Gamma,\bbG),$$
        $$\big(h,(\theta_1,\theta_2)\big)
                            \mapsto \big(f_0(h),\theta_2\big).$$
Since $f_0$ is surjective, we see immediately that $\mcZ(\Gamma,
\bbH)\to \mcZ(\Gamma, \bbG)$ is surjective on objects and that it is
a fibration of groupoids (i.e., has the arrow lifting property).
This  almost proves that the induced map on the set of isomorphism
classes of these groupoids is a bijection (which is the same thing
as saying that $f_*$ induces a bijection on the first cohomology
sets). All we need to check is that if $(p,\vep), (p',\vep') \in
Z^1(\Gamma,\bbH)$ map to the same element in $Z^1(\Gamma, \bbG)$,
then they are joined by an arrow in $\mcZ(\Gamma, \bbH)$. We have
$\vep=\vep'$ and $f_0p=f_0p'$. Hence, the map $\theta \: \Gamma \to
H_0$ defined by $\si \mapsto p(\si)^{-1}p'(\si)$ factors through
$\ker f_0$. It is easy to see that $\big(1,(1,\theta)\big) \in
C^0(\Gamma,\bbH)$ provides the desired arrow in $\mcZ(\Gamma, \bbH)$
joining $(p,\vep)$  to $(p',\vep')$. This completes the proof
 that $f_*$ is a bijection on the first cohomology sets.

To show that $f_*$ induces an isomorphism on $H^{-1}$ and $H^0$, we
need to verify that the induced map of crossed-modules (see
$\S$\ref{S:0})
  $$[H_0\times_{G_0}G_1 \to Z^0(\Gamma,\bbH)]\ \to \
              [G_1 \to Z^0(\Gamma,\bbG)]$$
is an equivalence.  Observe that $Z^0(\Gamma,\bbH) \subset
C^0(\Gamma,\bbH)$ consists of triples
$\big(h,(\theta_h,\theta_2)\big)$, where $h$ and $\theta_2$ are
arbitrary and $\theta_h \:\Gamma \to H_0$ is defined by the rule
$\si \mapsto h^{-1}\cdot{}^{\si}\!h$. The map $Z^0(\Gamma,\bbH) \to
Z^0(\Gamma,\bbG)$ is given by $\big(h,(\theta_h,\theta_2)\big)
\mapsto \big(p_0(h),\theta_2\big)$. It is clear that this map is
surjective.

By Lemma \ref{L:pullback}, it is enough to show that the following
diagram is cartesian
        $$\xymatrix{H_0\times_{G_0}G_1 \ar[r]^{d'} \ar [d]_{\pr_2}
              &  Z^0(\Gamma,\bbH) \ar[d] \\
           G_1 \ar[r]_{d} & Z^0(\Gamma,\bbG)}$$
The fact that this diagram is cartesian becomes obvious once we
recall ($\S$\ref{S:0}) that $d$ and $d'$ are defined as follows:
  $$d(\mu)=(\partial\mu, \theta_\mu),
                \ d'(h,\al)=\big(h,(\theta_h,\theta_{\al})\big).$$
The proof of the proposition is  complete.
\end{proof}

\begin{rem}
    The butterfly approach of $\S$\ref{S:Butterfly} provides another
    proof of Proposition \ref{P:equivalence}.
\end{rem}

\section{Butterflies}{\label{S:Butterfly}}

This section is a prelude to $\S$\ref{S:CohviaButterfly} in which we
will present an alternative construction of the cohomologies
$H^i(\Gamma,\bbG)$ and also of  the complex $\mcK^{\leq
1}(\Gamma,\bbG)$. This new construction, which is based on the idea
of {\em butterfly}, has the following advantages: 1) it is much
easier to write down $H^i(\Gamma,\bbG)$ and describe their
properties, 2) it is easy to recover the cocycles from this
description, 3) it works for  arbitrary topological groups, and in
fact in any topos, 4) it can be generalized to
the case where $\Gamma$ itself is a crossed-module.

To motivate the relevance of butterflies, let us explain the idea in
the case of $H^1$. Assume for the moment that $\bbG=G$ is a group.
In this case, to give a  1-cocycle (e.g, a crossed-homomorphism) $p
\: \Gamma \to G$ is  the same thing as giving a group homomorphism
$\tilde{p} \: \Gamma \to G\rtimes\Gamma$ making the following
diagram commutative:
   $$\xymatrix{ & G\rtimes\Gamma \ar[d]^{\pr} \\
          \Gamma \ar[r]_{\id}  \ar[ru]^{\tilde{p}}  & \Gamma}$$
The (right) conjugation action of $G  \subseteq G\rtimes\Gamma$  on
$G\rtimes\Gamma$ induces an action on the set of such $\tilde{p}$.
The transformation groupoid of this action is what we called
$\mcZ^1(\Gamma,G)$ in $\S$\ref{S:1}. The set of isomorphism classes
of $\mcZ^1(\Gamma,G)$ is  $H^1(\Gamma,G)$.

The aim is now to imitate this definition in the case where $G$
replaced by a crossed-module $\bbG$. A group homomorphisms
$\tilde{p} \:\Gamma \to G\rtimes\Gamma$ should now be replaced by a
{\em weak} morphism of crossed-modules
$\Gamma \to \bbG\rtimes\Gamma$. This is where butterflies come
in the picture.

\subsection{Butterflies}{\label{SS:butterfly}}

We recall the definition of a butterfly from \cite{Maps}.

Let $\bbG=[G_1\to G_0]$ and $\bbH=[H_1\to H_0]$ be crossed-modules.
By a {\em butterfly} from $\bbH$ to $\bbG$ we mean a commutative
diagram of groups
   $$\xymatrix@C=8pt@R=6pt@M=6pt{ H_1 \ar[rd]^{\kappa} \ar[dd]
                          & & G_1 \ar[ld]_{\iota} \ar[dd] \\
                            & E \ar[ld]^{\pi} \ar[rd]_{\rho}  & \\
                                       H_0 & & G_0       }$$
such that the two diagonal maps are complexes and the NE-SW diagonal
is short exact. We require that for every $x \in E$, $\al \in G$ and
$\be \in H$,
    $$\iota(\al^{\rho(x)})=x^{-1}\iota(\al)x \ \text{and} \
    \kappa(\be^{\pi(x)})=x^{-1}\kappa(\be)x.$$
We denote the above butterfly by the 5-tuple
$(E,\rho,\pi,\iota,\kappa)$, or if there is no fear of confusion,
simply by $E$.

A {\em morphism} between two butterflies $(E,\rho,\pi,\iota,\kappa)$
and $(E',\rho',\pi',\iota',\kappa')$ is a pair $(t,g)$ where $g \in
G_0$ and $t \: E \to E'$ is an isomorphism of groups. We require
that $t$ commutes with the $\kappa$ and the $\pi$ maps and satisfies
the relations
        $$g^{-1}\rho(x)g=\rho'(t(x))
             \ \text{and} \ \iota'(\al^g)=t\iota(\al)$$
for every  $x \in E$, $\al \in G_1$. The composition of two arrows
$(t,g) \: E \to E'$ and $(t',g') \: E' \to E''$ is defined to be
$(t'\circ t,gg')$.

A {\em 2-morphism} between $(g,t)$ and $(g',t')$ is an element $\mu
\in G_1$ such that
    $$g\partial(\mu)=g' \ \text{and} \ t'=\mu^{-1}t\mu.$$
The composition of two 2-arrows $\mu_1$ and $\mu_2$ is defined to be
$\mu_1\mu_2$.

For fixed $\bbH$ and $\bbG$, the butterflies between them are
objects of a 2-groupoid whose morphisms and 2-arrows are defined as
above.

\begin{ex}{\label{E:Dedecker}}
    Assume that $\bbH=\Gamma$ is a group. Then, a butterfly from
    $\Gamma$ to $\bbG$  is a diagram
     $$\xymatrix@C=8pt@R=6pt@M=6pt{
                          & & G_1 \ar[ld]_{\iota} \ar[dd]^{\partial} \\
                            & E \ar[ld]_{\pi} \ar[rd]^{\rho}  & \\
                                      \Gamma & & G_0       }$$
    where the diagonal sequence is short exact and the map $\rho$
    intertwines the conjugation action of $E$ on $G_1$ with the
    crossed-module action of $G_0$. Such a diagram corresponds to a
    weak morphism $\Gamma \to \bbG$ and also to a 1-cocycle on
    $\Gamma$ with values in $\bbG$ (for the trivial action of
    $\Gamma$). A morphism between two such diagrams corresponds to
    a transformation of weak functors and
     also to an equivalence of 1-cocycles.
\end{ex}

\begin{ex}{\label{E:braided}}
     A braided crossed-module $\bbG$ is the same things as a group
     object in the category of crossed-modules and weak morphisms.
     More precisely, the multiplication morphism
     of this group object is given by the butterfly
         $$\xymatrix@C=8pt@R=6pt@M=6pt{ G_1\times G_1 \ar[rd]^{k}
           \ar[dd]_{(\partial,\partial)}
                          & & G_1 \ar[ld]_{i} \ar[dd]^{\partial} \\
                            & B \ar[ld]^{p} \ar[rd]_{r}  & \\
                                       G_0\times G_0 & & G_0       }$$
     where the group $B$ is defined as follows. The underlying set
     of $B$ is $G_0\times G_0\times G_1$. The product in $B$ is defined by
         $$(g,h,\al)\cdot(g',h',\al'):=(gg',hh',\{h,g'\}^{h'}\al^{g'h'}\al').$$
     The structure maps of the butterfly are given by
          $$k(\al,\be):=(\partial \al,\partial\be,\be^{-1}\al^{-1}),\ \  \
               i(\al):=(1,1,\al)$$
          $$p(g,h,\al):=(g,h), \ \  r(g,h,\al):=gh\partial\al,$$
\end{ex}

\section{Cohomology via butterflies}{\label{S:CohviaButterfly}}

In this section we will use the idea discussed at the beginning of
the previous section to
give a simple description of the cohomologies $H^i(\Gamma,\bbG)$.

Given a group $\Gamma$ acting on a crossed-module $\bbG$ on the
left, the semi-direct product $\bbG\rtimes\Gamma$ is the
crossed-module
    $$\bbG\rtimes\Gamma:=[(\partial,1) \: G_1 \to G_0\rtimes\Gamma].$$
The action of $G_0\rtimes\Gamma$ on $G_1$ is defined by
   $$\al^{(g,\si)}:={}^{\si^{-1}}\!(\al^g)=
      ({}^{\si^{-1}}\!\al)^{({}^{\si^{-1}}\!g)}.$$
This crossed-module comes with a natural projection map
   $$\pr \: \bbG\rtimes\Gamma \to \Gamma.$$
Here, by abuse of notation, we have denoted the crossed-module $[1\to \Gamma]$ by
$\Gamma$.

\subsection{Butterfly description of $H^i(\Gamma,\bbG)$}{\label{SS:H1new}}

We define the 2-groupoid $\mfZ(\Gamma,\bbG)$ as follows. The set of
objects of $\mfZ(\Gamma,\bbG)$ are pairs $(E,\rho)$ where $E$ is an
extension
   $$1 \to G_1 \llra{\iota} E \llra{\pi} \Gamma \to 1$$
and $\rho \: E  \to     G_0$ is a map which makes the diagram
     $$\xymatrix@C=8pt@R=6pt@M=6pt{
                          & & G_1 \ar[ld]_{\iota} \ar[dd]^{\partial} \\
                            & E  \ar[ld]_{\pi} \ar[rd]^(0.4){\rho}  & \\
                                      \Gamma & &  G_0       }$$
commute and satisfies the following conditions:
\begin{itemize}
   \item for every $x,y \in E$,  $\rho(xy)=\rho(x)\cdot {}^{\pi(x)}\!\rho(y)$,
   \item for every $x \in E$ and $\al \in G_1$,
       $\iota\big({}^{\pi(x)^{-1}}\!(\al^{\rho(x)})\big)=x^{-1}\iota(\al)x$.
\end{itemize}
(The maps $\iota$ and $\pi$ are also part of the data but we
suppress them from the notation. We usually
identify $G_1$ with $\iota(G_1) \subseteq E$ and denote
$\iota(\mu)$ simply by $\mu$.)

An arrow in $\mfZ(\Gamma,\bbG)$ from $(E,\rho)$ to $(E',\rho')$ is a
pair $(t,g)$ where $g \in G_0$ and  $t$ is an isomorphism
$t \: E \to E'$   such that
   \begin{itemize}
       \item $\pi=\pi'\circ t$,
       \item   For every $x \in E$,
                 $g^{-1}\cdot\rho(x)\cdot{}^{\pi(x)}\!g=\rho' t(x)$,
       \item   For every $\al \in G_1$,  $\iota'(\al^g)=t\iota(\al)$.
   \end{itemize}
The composition of two arrows $(t,g) \: (E,\rho) \to (E',\rho')$ and
$(t',g') \: (E',\rho') \to (E'',\rho'')$ is defined to be $(t'\circ t,gg')$.

A 2-arrow $(t,g) \Rightarrow (t',g')$ is an element $\mu \in G_1$
such that $g\partial(\mu)=g'$ and $t'=\mu^{-1}t\mu$. The composition
of the two 2-arrows $\mu \: (t,g) \Rightarrow (t',g')$ and $\mu' \:
(t',g') \Rightarrow (t'',g'')$ is defined to be $\mu\mu'$.

The 2-groupoid is naturally pointed. The base object is
$(E_{triv},\rho_{triv})$, where $E_{triv}=G_1\rtimes\Gamma$ and
$\rho_{triv} \: E_{triv} \to G_0$
sends $(\al,\si)$ to $\partial(\al)$.

Let us now explain how $H^i(\Gamma,\bbG)$,
$i=-1,0,1$, can be recovered from the 2-groupoid $\mfZ(\Gamma,\bbG)$.

The group $H^{-1}(\Gamma,\bbG)$ is naturally isomorphic to the group
of  2-arrows from the arrow $(\id_{E_{triv}},1_{G_0})$ to itself.

The group $H^{0}(\Gamma,\bbG)$ is naturally isomorphic to the group
of 2-isomorphism classes of arrows from  the base object
$(E_{triv},\rho_{triv})$ to itself.

The pointed set $H^{1}(\Gamma,\bbG)$ is naturally isomorphic to the
pointed set of isomorphism classes of objects in
$\mfZ(\Gamma,\bbG)$.

We can also describe the groupoid $\mcZ^{1}(\Gamma,\bbG)$ defined in
$\S$\ref{S:1}. This groupoid is naturally equivalent to the groupoid
obtained by identifying 2-isomorphic arrows in $\mfZ(\Gamma,\bbG)$.

\begin{rem}{\label{R:reformulation}}
  By associating the one-winged butterfly
        $$\xymatrix@C=8pt@R=6pt@M=6pt{
                          & & G_1 \ar[ld]_{\iota} \ar[dd]^{(\partial,1)} \\
                            & E  \ar[ld]_{\pi} \ar[rd]^(0.4){(\rho,\pi)}  & \\
                                      \Gamma & &  G_0\rtimes \Gamma       }$$
  to an object in $\mfZ(\Gamma,\bbG)$, the 2-groupoid
  $\mfZ(\Gamma,\bbG)$ defined above is seen to be isomorphic to the
  2-groupoid of butterflies from $\Gamma$ to $\bbG\rtimes \Gamma$
  whose composition with the projection map $\bbG\rtimes \Gamma \to
  \Gamma$ is equal to the identity map $\Gamma \to \Gamma$. Note
  that, in contrast with \cite{Maps}, here we are considering {\em
  non pointed} transformation between butterflies. That is why we
  obtain a 2-groupoid (rather than a
  groupoid) of butterflies.
\end{rem}

\subsection{Relation to the cocycle description of $H^i$}{\label{SS:relation}}

To see how to recover a cocycle in the sense of $\S$\ref{S:1} from
the pair $(E,\rho)$, {\em choose} a set theoretic section $s \:
\Gamma \to E$ to the map $\pi$. Assume $s(1)=1$. Define $p \: \Gamma
\to G_0$ to be the composition $\rho \circ s$ and $\vep \: \Gamma
\times \Gamma \to G_1$ to be $$\vep \: (\si,\tau) \ \mapsto  \
s(\si\tau)^{-1}s(\si)s(\tau).$$ The pair $(p,\vep)$ is a 1-cocycle
in the sense of $\S$\ref{S:1}. Conversely, given a 1-cocycle
$(p,\vep)$ in the sense of $\S$\ref{S:1}, we define $E$ to be  the
group that has $\Gamma \times G_1$ as
  the underlying  set and whose product is defined by
     $$(\si_1,\al_1)\cdot(\si_2,\al_2)
     :=\big(\si_1\si_2\, ,\,
       \vep(\si_1,\si_2)\cdot{}^{\si_2^{-1}}\!(\al_1^{p(\si_2)})\al_2\big).$$
Define the group homomorphism $\rho \: E \to G_0$   by
      $$\rho(\si,a)=p(\si)\partial({}^{\si}\!g).$$
The homomorphisms $\iota \: G_1 \to E$ and $\pi \: E \to \Gamma$
are the inclusion and the projection maps on the corresponding  components.

\subsection{Group structure on $\mfZ(\Gamma,\bbG)$; the preliminary version}
{\label{SS:H1group}}

In the case where $\bbG$ has a $\Gamma$-equivariant braiding, there
is a group structure on the 2-groupoid $\mfZ(\Gamma,\bbG)$ which
lifts the one on $H^1(\Gamma,\bbG)$ introduced in
$\S$\ref{SS:braiding1}. In this subsection, we illustrate this
product  by making use of the butterfly of Example
\ref{E:braided}. In $\S$\ref{SS:H1explicit}, we give an explicit
formula for  it.

We begin by observing that the butterfly of Example
\ref{E:braided} can be augmented to a butterfly
       $$\xymatrix@C=8pt@R=6pt@M=6pt{ G_1\times G_1
          \ar[rd]^{(k,1)} \ar[dd]_{(\partial,\partial,1)}
              & & G_1 \ar[ld]_{(i,1)} \ar[dd]^{\partial} \\
            & B\rtimes\Gamma \ar[ld]_{(p,\id)} \ar[rd]^{(r,\id)}  & \\
              (G_0\times G_0)\rtimes\Gamma & & G_0\rtimes\Gamma       }$$
This butterfly gives rise to a product on   $\mfZ(\Gamma,\bbG)$ as
follows. Given  $(E,\rho)$ and $(E',\rho')$, we can think of them as
one-winged butterflies from $\Gamma$ to $\bbG\rtimes\Gamma$ relative
to $\Gamma$ (see Remark \ref{R:reformulation}). Form the one-winged
diagonal butterfly from $\Gamma$ to the fiber product of
$\bbG\rtimes\Gamma$ with itself relative to $\Gamma$. That is,
consider
         $$\xymatrix@C=8pt@R=6pt@M=6pt{ && G_1\times G_1
              \ar[ld] _{(\iota,\iota')}\ar[dd]^{(\partial,\partial,1)}  \\
                     &  E\times_{\Gamma} E' \ar[rd]^{} \ar[ld]_{} & \\
                        \Gamma    & &      (G_0\times G_0)\rtimes\Gamma    }$$
where $E\times_{\Gamma} E'$ stands for the fiber product of $E$ and
$E'$ over $\Gamma$. Composing this butterfly with the one of the
beginning of this subsection, we find a one-winged butterfly
          $$\xymatrix@C=10pt@R=8pt@M=6pt{ && G_1
              \ar[ld]_j \ar[dd]^{(\partial,1)}       \\
                &  E\star E' \ar[rd]^{(\rho\star\rho',\pi)} \ar[ld]_{} & \\
                       \Gamma    & &       G_0\rtimes\Gamma    }$$
This  is the sought after product $(E,\rho)\star(E',\rho')$. (In
$\S$\ref{SS:H1explicit} we will explicitly write down what
$(E,\rho)\star(E',\rho')$ is.)

The above product makes $\mfZ(\Gamma,\bbG)$ into a (weak) group
object in the category of 2-groupoids and weak functors. Therefore,
$\mfZ(\Gamma,\bbG)$ corresponds to a (weak) 3-group. Homotopy
theoretically, a 3-group is equivalent to a 2-crossed-module. The
2-crossed-module $\mcK^{\leq 1}(\Gamma,\bbG)$ that we encountered in
$\S$\ref{SS:braided} is a model for the 3-group $\mfZ(\Gamma,\bbG)$.
More precisely, the construction introduced at the end of
$\S$\ref{SS:H1new} gives an  equivalence  from the 3-group
associated to  $\mcK^{\leq 1}(\Gamma,\bbG)$ to $\mfZ(\Gamma,\bbG)$.

If we identify 2-isomorphic arrows in $\mfZ(\Gamma,\bbG)$, we obtain
a (weak) group object in the category of groupoids, i.e., a (weak)
2-group. This 2-group is, in turn, equivalent to  the 2-group
associated to the crossed-module
    $$[d \:  C^{0}(\Gamma,\bbG)/B^{0}(\Gamma,\bbG) \to Z^{1}(\Gamma,\bbG)],$$
namely   to $\mcZ(\Gamma,\bbG)$  ($\S$\ref{S:1}). The set
$H^1(\Gamma,\bbG)$ of  isomorphism classes of $\mfZ(\Gamma,\bbG)$
also inherits a group structure. This group structure
coincides with the one defined in  $\S$\ref{SS:braiding1}.

\subsection{Group structure on $\mfZ(\Gamma,\bbG)$; the explicit version}
{\label{SS:H1explicit}}

In this subsection, we explicitly write down the multiplication in
$\mfZ(\Gamma,\bbG)$. The formulas are obtained by unraveling the
definition of the composition of butterflies (\cite{Maps},
$\S$10.1). It is more or less straightforward how to derive the
formulas, but if done naively one usually ends up with very involved
expressions. Some extra algebraic manipulation is needed to bring
the formulas to the form presented below.

Let us start with the product of two objects in $\mfZ(\Gamma,\bbG)$.
The product $(E,\rho)\star (E',\rho')$ is the pair $(E\star
E',\rho\star\rho')$ which is defined as follows. Let
$F:=E\times_{\Gamma} E'$ be the fiber product of $E$ and $E'$ over
$\Gamma$. We endow $F$ with the following group structure:
  $$(x_1,y_1)\cdot(x_2,y_2):=
      \big(x_1x_2\cdot\iota\{\urho'(y_1)^{-1},
        {}^{\pi(x_1)}\!\urho(x_2)\}^{-1},y_1y_2\big).$$

There is a normal subgroup of $F$ consisting of elements of the form
$(\iota(\al),\iota'(\al)^{-1})$, $\al \in G_1$. We define $E\star
E'$ to be the quotient of $F$ by this normal subgroup.
Alternatively, one can think of $E\star E'$ as the group obtained
from $F$ by declaring $(\iota(\al),1)$ equal to $(1,\iota'(\al))$,
for every $\al \in G_1$. There is a natural group homomorphism $j \:
G_1 \to  E\star E'$ which sends $\al$ to the common value of
$(\iota(\al),1)$ and $(1,\iota'(\al))$. The group homomorphism
 $\rho\star\rho' \:  E\star E' \to G_0$ is defined by
        $$\rho\star\rho' \: (x,y) \mapsto \urho(x)\urho'(y).$$
This completes the definition of the product $(E,\rho)\star
(E',\rho')$.

Calculating the product of two arrows turns out to be more
complicated, and the formula is rather unpleasant, as we will now
see. Given two arrows $(t,g) \: (E_1,\rho_1) \to (E_2,\rho_2)$ and
$(t',g') \: (E'_1,\rho'_1) \to (E'_2,\rho'_2)$ in
$\mfZ(\Gamma,\bbG)$, we define their product to be the arrow
   $$(t\star t',gg') \: (E_1,\rho_1)\star(E'_1,\rho'_1)
        \to (E_2,\rho_2)\star(E'_2,\rho'_2),$$
where $t\star t'$ is the homomorphism
    $$t\star t' \: E_1\star E'_1 \to E_2\star E'_2,$$
        $$ (x,y) \mapsto \big(\{g',\urho_1(x)^{-1}g\}
              \{\urho'_1(y)g',{}^{\pi(x)}\!g^{-1}\}^{\urho_1(x)^{-1}g}
                  \cdot t(x)\ ,\ t'(y)\big).$$
The formula takes the much simpler form of
  $$(x,y) \mapsto \big(t(x),t'(y)\big)$$
in the case where  $g=g'=1$. But in general it seems our formula
can not be simplified further.

Finally, if we have two 2-arrows $(t_1,g_1) \Rightarrow (t_2,g_2)$
and $(t'_1,g'_1) \Rightarrow (t'_2,g'_2)$ given by $\mu,\mu' \in
G_1$, their product is defined by $\mu^{g'_1}\mu'$.

\subsection{The case of a symmetric braiding}{\label{SS:symmetricbutterfly}}
In the case where the braiding on $\bbG$ is symmetric, $\mfZ(\Gamma,\bbG)$
inherits a symmetric braiding
  $$b_{(E,\rho), (E',\rho')} \: (E,\rho)\star (E',\rho')
      \to (E',\rho')\star (E,\rho)$$
which is defined by
  $$(x,y) \mapsto \big(\iota\{\urho(x)^{-1},\urho'(y)^{-1}\}y\ ,\ x\big).$$
This braiding  is symmetric in the sense that
   $$b_{(E,\rho),(E',\rho')}\circ b_{(E',\rho'),(E,\rho')}=
                     \id_{(E,\rho),(E',\rho')}.$$
Since  $\mfZ(\Gamma,\bbG)$ is a group object in {\em 2-groupoids},
there is one more piece of data that goes into the definition of a
braiding on it. Given two arrows $(t,g) \: (E_1,\rho_1) \to
(E_2,\rho_2)$ and $(t',g') \: (E'_1,\rho'_1) \to (E'_2,\rho'_2)$ in
$\mfZ(\Gamma,\bbG)$, we need a 2-arrow $\psi$ making the following
diagram commute. (To make the diagram less involved, we abbreviate
$(E,\rho)$ to $E$.)
       $$\xymatrix@=-3pt@M=8pt{ E_1\star E_1' \ar[ddd]_{(t,g)\star(t'.g')}
          \ar[rrr]^{b_{E_1,E'_1}} &&&
               E_1'\star E_1 \ar[ddd]^{(t',g')\star(t,g)} \\
                        &&& \ar@{=>} [lldd]_{\psi}  \\
                            &&&     \\
                     E_2\star E_2'  \ar[rrr]_{b_{E_2,E'_2}} &&&
                                        E'_2\star E_2}$$
We take $\psi$ to be $\{g,g'\}$.

As we pointed out in $\S$\ref{SS:H1group}, the multiplication in
$\mfZ(\Gamma,\bbG)$ makes it into a group object in the category of
2-groupoids and weak functors, and the corresponding
2-crossed-module is equivalent to $\mcK^{\leq 1}(\Gamma,\bbG)$. The
above discussion can be summarized by saying that, when $\bbG$ is
symmetric,  the 2-crossed-module  $\mcK^{\leq 1}(\Gamma,\bbG)$ is
braided and  ``symmetric''. (We use quotes because we are not
aware of a precise definition of the notion of  symmetric braided 2-crossed-module.)

The same discussion applies to the 2-group obtained by identifying
2-isomorphic arrows in  $\mfZ(\Gamma,\bbG)$.  Thus,  the
crossed-module
     $[d \:  C^{0}(\Gamma,\bbG)/B^{0}(\Gamma,\bbG)
                    \to Z^{1}(\Gamma,\bbG)]$
introduced in $\S$\ref{SS:braiding1} inherits  a symmetric braiding.
We have already  encountered  this braiding in Lemma
\ref{L:symmetric}.

\section{Functoriality of $\mfZ(\Gamma,\bbG)$}{\label{S:Functoriality}}

In this section the reader is assumed to  have some basic
familiarity with the formalism of butterflies \cite{Maps}. One
advantage of working with butterflies, as opposed to strict
morphisms of crossed-modules, is that certain calculations become
completely categorical and simple. Another advantage is that
questions regarding invariance under
equivalence of crossed-modules get automatically taken care of.

The main result of this section can be summarized by saying that
$\mfZ(\Gamma,\bbG)$ is functorial with respect to weak
$\Gamma$-equivariant morphisms $B \: \bbG \to \bbH$ of
crossed-modules (read strong $\Gamma$-equivariant butterflies).

\subsection{Strong $\Gamma$-butterflies}{\label{SS:stronggamma}}
We begin by recalling the definition of a strong butterfly
(\cite{ButterflyI}, Definition 4.1.6).

\begin{defn}{\label{D:strong}}
    A {\bf strong butterfly} $(B,s) \: \bbH \to \bbG$ consists of
    a butterfly
       $$\xymatrix@C=8pt@R=6pt@M=6pt{ H_1 \ar[rd]^{k} \ar[dd]
                          & & G_1 \ar[ld]_{i} \ar[dd] \\
                            & B \ar[ld]^{p} \ar[rd]_{r}  & \\
                                       H_0 & & G_0       }$$
    together with a set theoretic section $s \: H_0 \to B$ for $p$.
    When $\bbG$ and $\bbH$ carry a strict $\Gamma$-action, a {\bf
    $\Gamma$-butterfly} is a butterfly for which the group $B$ is
    endowed with a $\Gamma$-action such that the four maps $i$, $k$,
    $p$ and $r$ are  $\Gamma$-equivariant. A {\bf strong
    $\Gamma$-butterfly} is a $\Gamma$-butterfly whose underlying
    butterfly is strong. A morphism of strong $\Gamma$-butterflies
    is a morphism of the underlying butterflies
    ($\S$\ref{SS:butterfly}) in which the homomorphism $t \: E \to
    E'$  is $\Gamma$-equivariant. Finally, the definition of a
    2-morphism is the one which ignores the section $s$ and the
    $\Gamma$-action.
\end{defn}

\begin{rem}
    Under the correspondence between butterflies and weak morphisms,
    $\Gamma$-butterflies correspond to {\em weakly}
    $\Gamma$-equivariant
    weak morphisms.
\end{rem}

With the composition defined as in (\cite{Maps}, $\S$10.1), strong
$\Gamma$-butterflies form a bicategory which is biequivalent to the
bicategory of $\Gamma$-butterflies (via the forgetful functor
forgetting the section).

A $\Gamma$-butterfly as in Definition \ref{D:strong} gives rise to a
butterfly
           $$\xymatrix@C=8pt@R=6pt@M=6pt{ H_1 \ar[rd]^{k} \ar[dd]
                          & & G_1 \ar[ld]_{i} \ar[dd] \\
            & B\rtimes\Gamma \ar[ld]^{(p,\id)} \ar[rd]_{(r,\id)}  & \\
                  H_0\rtimes\Gamma & & G_0\rtimes\Gamma       }$$
If $B$ is strong, then this butterfly is also strong in a natural
way. This construction respects composition of (strong)
$\Gamma$-butterflies. More precisely, it gives rise to a trifunctor
from the tricategory of $\Gamma$-crossed-modules and
$\Gamma$-butterflies to the tricategory of crossed-modules and
butterflies.

\subsection{Functoriality of  $\mfZ(\Gamma,\bbG)$}{\label{SS:functoriality}}

The 2-group $\mfZ(\Gamma,\bbG)$ is functorial in the second variable
in the following sense: for a fixed $\Gamma$, $\mfZ(\Gamma,-)$ is a
trifunctor from the tricategory of $\Gamma$-crossed-modules and
strong $\Gamma$-butterflies to the tricategory of 2-groupoids. We
will not give a detailed proof of this statement. We will only
describe the effect
  $$(B,s)_* \: \mfZ(\Gamma,\bbH) \to \mfZ(\Gamma,\bbG)$$
of a strong $\Gamma$-butterfly $(B,s)$ from $\bbH$ to $\bbG$.  The
effects of morphisms and 2-arrows of  strong $\Gamma$-butterflies
are easy to describe.

Let $(B,s) \: \bbH \to \bbG$ be a strong $\Gamma$-butterfly. Let
$(E,\rho)$ be an object in  $\mfZ(\Gamma,\bbH)$, as in the diagram
        $$\xymatrix@C=8pt@R=6pt@M=6pt{
                          & & H_1 \ar[ld]_{\iota} \ar[dd]^{\partial} \\
                            & E \ar[ld]_{\pi} \ar[rd]^(0.4){\rho}  & \\
                                      \Gamma & &  H_0     }$$

We define the image under $(B,s)$ of $(E,\rho)$ in
$\mfZ(\Gamma,\bbG)$ to be the pair $(F,\lambda)$ which is defined as
follows. Consider the fiber product $K:=E\times_{\urho,H_0,p} B$.
This can be made into a group by defining the product to be
  $$(x,b)\cdot(y,c):=(xy\ ,\ b\cdot{}^{\pi(x)}\!c).$$
There is a subgroup $N$ of this group consisting of elements of the
form $\big(\iota(\al),k(\al)\big)$, $\al \in G_1$.  We define $F$ to
be $K/N$. It fits in the following diagram:
           $$\xymatrix@C=8pt@R=6pt@M=6pt{
              & & G_1 \ar[ld]_{(1,i)} \ar[dd]^{\partial} \\
                & F \ar[ld]_{\pi\circ\pr_1} \ar[rd]^(0.4){\lambda}  & \\
                                      \Gamma & &  G_0     }$$
The crossed homomorphism $\lambda \: F \to  G_0$ is given by $(x,b)
\mapsto r(b)$. It is easy to verify that $(F,\lambda)$ is an object
in $\mfZ(\Gamma,\bbG)$.

The effect  of $(B,s)$ on an arrow $(t,h) \: (E,\rho) \to
(E',\rho')$ is the pair $\big(u,rs(g)\big)$, where $u \: F \to F'$
is the homomorphism induced from the map
   $$E\times_{\urho',H_0,p} B \to E'\times_{\urho',H_0,p}, B$$
   $$(x,b) \mapsto \big(t(x)\ ,\ s(h)^{-1}\cdot b
                  \cdot{}^{\pi(x)}\!s(h)\big).$$

Finally, the effect of  $(B,s)$ on a 2-arrow $\mu \: (t,h)
\Rightarrow (t',h')$, where $\mu \in H_1$, is defined to be the
unique element $\nu \in G_1$ such that
$i(\nu)=s(g)^{-1}s(g\partial\mu)\kappa(\mu)^{-1}$.

\begin{rem}
   The functoriality of $\mfZ(\Gamma,\bbH)$ implies immediately that
   for every $\Gamma$-equivariant equivalence $f \: \bbH \to \bbG$
   of crossed-modules,   the induced bifunctor
       $$f_* \: \mfZ(\Gamma,\bbH) \to \mfZ(\Gamma,\bbG)$$
   is a   biequivalence. Therefore, the induced morphism of
   crossed-modules in groupoids
      $$f_* \: \mcK^{\leq 1}(\Gamma,\bbH)
                        \to \mcK^{\leq 1}(\Gamma,\bbG)$$
   is an equivalence (compare Proposition \ref{P:equivalence}).
   In particular, the induced maps on cohomology $H^i$, $i=-1,0,1$,
   are isomorphisms.
\end{rem}

\subsection{In the presence of a braiding}{\label{SS:braidedfunctoriality}}

\begin{defn}{\label{D:braidedbutterfly}}
  A  $\Gamma$-butterfly
       $$\xymatrix@C=8pt@R=6pt@M=6pt{ H_1 \ar[rd]^{k} \ar[dd]
                          & & G_1 \ar[ld]_{i} \ar[dd] \\
                            & B \ar[ld]^{p} \ar[rd]_{r}  & \\
                                       H_0 & & G_0       }$$
  is {\bf braided} if it satisfies the identity
  $$k\{p(b),p(c)\}_{\bbH}\cdot
  i\{r(b),r(c)\}_{\bbG}=b^{-1}c^{-1}bc$$ for every $b,c \in B$. A
  {\bf strong braided  $\Gamma$-butterfly} is a braided
  $\Gamma$-butterfly together with a set theoretic section $s \: H_0
  \to E$ for $p$. Morphisms and 2-morphisms of strong braided
  $\Gamma$-butterflies are defined to be the ones of  the underlying
  $\Gamma$-butterflies
\end{defn}

If $\bbG$ and $\bbH$ are endowed with a $\Gamma$-equivariant
braiding and $B$ is a braided $\Gamma$-butterfly in the sense of
Definition \ref{D:braidedbutterfly}, then the bifunctor
       $$(B,s)_* \: \mfZ(\Gamma,\bbH) \to \mfZ(\Gamma,\bbG)$$
is monoidal. The monoidal structure on this functor is given by the
natural isomorphisms
  $$F_{E,E'}\: B_*(E)\star B_*(E') \to B_*(E\star E'),$$
  $$\big((x,b),(y,c)\big) \mapsto (x,y,bc).$$
Here we have abbreviated $(B,s)_*$ to $B_*$ and $(E,\rho)$ to $E$.
(The proof that this map is a group homomorphism
is quite nontrivial and involves some lengthy calculations.) Also, given
two arrows $(t,h) \: (E_1,\rho_1) \to (E_2,\rho_2)$ and
 $(t',h') \: (E'_1,\rho'_1) \to (E'_2,\rho'_2)$ in $\mfZ(\Gamma,\bbH)$,
 we have the following commutative 2-cell in $\mfZ(\Gamma,\bbH)$:
        $$\xymatrix@=3pt@M=8pt{ B_*(E_1)\star B_*(E_1')
                 \ar[ddd]_{B_*(t',h')\star B_*(t,h)}
               \ar[rrr]^{F_{E_1,E'_1}} &&&
          B_*(E_1'\star E_1) \ar[ddd]^{B_*\big((t,h)\star(t',h')\big)} \\
                        &&& \ar@{=>} [lldd]_{\vep(h,h')}  \\
                            &&&     \\
                     B_*(E_2)\star B_*(E_2')
                      \ar[rrr]_{F_{E_2,E'_2}} &&&  B_*(E'_2\star E_2)}$$
where $\vep(h,h')\in G_1$ is the unique element in $G_1$ satisfying
the identity $i\vep(h,h')=s(hh')^{-1}s(h)s(h')$.

\begin{rem}
  It can be shown that, for a fixed $\Gamma$, $\mfZ(\Gamma,-)$ is a
  trifunctor from the tricategory of braided $\Gamma$-crossed-modules
  and braided strong $\Gamma$-butterflies to the tricategory of
  monoidal 2-groupoids. We will not prove this here.
\end{rem}

It follows from the above discussion that if $(B,s) \: \bbH \to
\bbG$ is a braided strong $\Gamma$-butterfly, then  the induced weak
morphism
       $$(B,s)_* \: \mcZ(\Gamma,\bbH) \to \mcZ(\Gamma,\bbG)$$
of 2-groups is braided. This implies that the induced map
    $$\mcK^{\leq 1}(\Gamma,\bbH) \to \mcK^{\leq 1}(\Gamma,\bbG)$$
is a morphism of 2-crossed-modules.   In particular, the induced map
    $$H^1(\Gamma,\bbH) \to H^1(\Gamma,\bbG)$$
is a group homomorphism.

\section{Everything over a  Grothendieck site}{\label{S:Site}}

The discussion of $\S$\ref{S:Functoriality} is valid over any
Grothendieck site, but some changes need to be made in the
definition of  $\mcZ(\Gamma,\bbG)$. We discuss this in this section
and prepare the ground to compare our definition of $H^i$ with the
standard one in terms of gerbes.

Let $X$ be a fixed Grothendieck site. By a group we mean a sheaf of
groups over $X$.  A short exact sequence means a short exact
sequence of sheaves of groups.

Let $\bbG=[G_1\to G_0]$ be a crossed-module over $X$, and $\Gamma$ a
group over $X$ acting strictly on $\bbG$. We would like to define
the analog of the 2-groupoid $\mfZ(\Gamma,\bbG)$. The definition is
more or less the same as in the discrete case, with one slight
change in the definition of arrows (hence, also of 2-arrows).
For this reason, we will use a different notation $\mfZ'(\Gamma,\bbG)$
for it.

The crossed-module $\bbG=[\partial \: G_1\to G_0]$ gives rise to a
quotient stack $[G_0/G_1]$ where $G_1$ acts on $G_0$ by right
multiplication via $\partial$.  That is, $[G_0/G_1]$ is the quotient
stack of the transformation groupoid $[G_0\ltimes G_1\sst{} G_0]$.
The latter is a strict group object  in the category of groupoids.
Therefore, the quotient stack $[G_0/G_1]$ is in fact a group stack.
We denote this group stack by $\mcG$.

\subsection{A provisional definition in terms of group stacks}
{\label{SS:preliminary}}

We include this subsection just to motivate the definition of
$\mfZ'(\Gamma,\bbG)$ that will be given in $\S$\ref{SS:global}.
Using the idea  discussed in Remark \ref{R:reformulation}, we
introduce a closely related (and naturally biequivalent) 2-groupoid
which is defined in terms of group stacks. This 2-groupoid, though
conceptually much simpler, is not very explicit. In
$\S$\ref{SS:global} we use the results of \cite{ButterflyI} to
translate this definition to the language of crossed-modules.

Let $\mcG$ be a group stack (for example, the quotient stack of
$\bbG$) with an action of a group $\Gamma$.  The associated
2-groupoid is defined as follows.

An object in this 2-groupoid is a (weak) morphism of group stacks $r
\: \Gamma \to \mcG\rtimes\Gamma$ such that $\pr_2\circ
r=\id_{\Gamma}$.

A morphism $(t,g) \: r \to r'$  in this 2-groupoid consists of  a
global section $g$ of $\mcG$ and a monoidal transformation $t \: r
\to gr'g^{-1}$, where  $gr'g^{-1}$ is the morphism $r'$ composed
with the conjugation by $g$  automorphism of $\mcG$.  The
composition of two morphisms $(t,g)$ and $(t',g')$ is defined to be
$\big(t(gt'g^{-1}), gg'\big)$.

A 2-arrow $\mu \: (t,g) \Rightarrow (t',g')$ is a transformation
$\mu \: g \to g'$ which intertwines $t$ and $t'$.

\subsection{The 2-groupoid $\mfZ'(\Gamma,\bbG)$}{\label{SS:global}}

Thanks to the equivalence of butterflies and weak morphisms of group
stacks \cite{ButterflyI}, we can translate the definition  given in
$\S$\ref{SS:preliminary} and find a more convenient definition of
$\mfZ'(\Gamma,\bbG)$ along the lines of $\S$\ref{S:CohviaButterfly}.

\medskip

{\noindent}{\bf Some notation.} We use the notation $X\ox{G}Y$ for
the contracted product of two sets $X$ and $Y$ with an action of a
group $G$. Breen (and also \cite{ButterflyI}) uses the  notation
$X\overset{G}{\wedge}Y$. If $X$ and $Y$ are over a third set $Z$ and
the $G$-actions are fiberwise, we denote by   $X\oux{Z}{G}Y$ the
subset in $X\ox{G}Y$ consisting of those pairs $(x,y)$ such that $x$
and $y$ map to the same element in $Z$.

\medskip

\noindent{\em Objects of $\mfZ'(\Gamma,\bbG)$.}
The objects of $\mfZ'(\Gamma,\bbG)$
turn out to be exactly the same as before. Namely, they are diagrams
        $$\xymatrix@C=8pt@R=6pt@M=6pt{
                & & G_1 \ar[ld]_{\iota} \ar[dd]^{\partial} \\
                  & E  \ar[ld]_{\pi} \ar[rd]^(0.4){\rho}  & \\
                                      \Gamma & &  G_0       }$$
of sheaves of groups over $X$ such that the diagonal sequence is
short exact and $\rho$ is a crossed-homomorphism intertwining the
conjugation action of $E$ on $G_1$ with the crossed-module action of
$G_0$ on $G_1$ (see $\S$\ref{SS:H1new}).

\medskip

\noindent{\em Arrows of $\mfZ'(\Gamma,\bbG)$.} An arrow in
$\mfZ'(\Gamma,\bbG)$ from $(E,\rho)$ to $(E',\rho')$ is a pair
$(t,g)$ where $g$ and $t$ are as follows. The $g$ here is a pair
$(P,\vf)$, where $P$ is a right $G_1$-torsor on $X$ and $\vf \: P
\to G_0$ is a $G_1$-equivariant morphism of sheaves. Here $G_1$ acts
on $G_0$ by right multiplication via $\partial$. The $t$ is an
isomorphism $E \to {}^g\!E'$ of sheaves of groups making the
following diagram commute
          $$\xymatrix@C=30pt@R=6pt@M=4pt{
               & & G_1 \ar[ld]_{{}^g\!\iota'}
                    \ar[lddd]^(0.4){\iota}  |!{[ld];[dddd]}\hole
                                \ar[dddd]^{\partial} \\
                      &   {}^g\!E'   \ar[lddd]_{{}^g\!\pi'}
                         \ar[rddd]^{{}^g\!\rho'} & \\
                            & &  \\
             & E  \ar[uu]^(0.4)t \ar[ld]^{\pi} \ar[rd]_(0.4){\rho}  & \\
                                      \Gamma & &  G_0       }$$

Here, ${}^g\!E':=P\ox{G_1}E'$ is the contracted product of  $P$ and
$E'$, where $G_1$ acts on $E'$ by right conjugation. The map
${}^g\!\pi'\: P\ox{G_1}E' \to \Gamma$
is $\pi'\circ\pr_2$. The map ${}^g\!\rho'$ is  defined by
   $${}^g\!\rho' \: P\ox{G_1}E' \to G_0$$
    $$ (u,x)\mapsto  \vf(u)\cdot\rho'(x)\cdot
                {}^{\pi'(x)}\!\vf(u)^{-1},$$
and ${}^g\!\iota'$ is defined by
       $$ {}^g\!\iota' \: G_1 \to P\ox{G_1}E'$$
    $$\al \mapsto \big(u,\iota'(\al^{\vf(u)})\big),$$
where $u \in P$ is randomly chosen; it is easy to see that the pair
$\big(u,\iota'(\al^{\vf(u)})\big)$, viewed as
an element in  $P\ox{G_1}E'$, is independent of $u$.

\begin{rem}
 The object $({}^g\!E,{}^g\!\rho)$ should be regarded as the left
 conjugate of $(E,\rho)$ under the action of $g$. Note that, by
 definition of the quotient stack,
 $g=(P,\vf)$ is a global section of $\mcG=[G_0/G_1]$.
\end{rem}

The composition of two arrows $(t,g) \: (E,\rho) \to (E',\rho')$ and
$(t',g') \: (E',\rho') \to (E'',\rho'')$ is defined to be
$(t'',g'')$, where $g''$ and $t''$ are defined as follows. First we
define $g''$. Let $g=(P,\vf)$ and $g'=(P',\vf')$. Make $P'$ into a
left $G_1$-torsor (indeed, a bitorsor) by setting
  $${}^{\alpha}\!u:=u^{\alpha^{\vf(u)}} , \  \alpha \in G_1,  u \in P'.$$
Form the contracted product   $P\ox{G_1}P'$, where now $P'$ is
viewed as a left $G_1$-torsor. It inherits
a right $G_1$-torsor structure from $P'$. Define
$\vf\ox{G_1}\vf'$ by the rule
    $$\vf\ox{G_1}\vf' \: P\ox{G_1}P' \to G_0, \ \
      (u,v) \mapsto \vf(u)\vf'(v).$$
We define $g''$ to be the pair $(P\ox{G_1}P',\vf\ox{G_1}\vf')$.
(Remark that if we view $g$ and $g'$ as global sections of the group
stack $\mcG=[G_0/G_1]$, then $g''$ corresponds to the product
$gg'$).

The homomorphism $t''$ is defined to be
    $$t'' \: E \to P\ox{G_1}P'\ox{G_1}E''$$
     $$t'':=(P\ox{G_1}t') \circ t.$$

\medskip

\noindent{\em 2-arrows of $\mfZ'(\Gamma,\bbG)$.} A 2-arrow $(t,g)
\Rightarrow (t',g')$ in $\mfZ'(\Gamma,\bbG)$ is an isomorphism $\mu
\: g \to g'$ such that the  diagram
     $$\xymatrix@C=2pt@R=14pt@M=8pt{  {}^g\!E
                           \ar[rr]^{\mu\ox{G_1} E} \ar[rd]_{t}  & &
             {}^{g'}\!E \ar[ld]^{t'} \\
                         &   E & }$$
commutes. Here, by an isomorphism $\mu\:g \to g'$ we mean an
isomorphism $P_g \to P_{g'}$ of $G_1$-torsors, which we denote again
by $\mu$,  making the diagram
        $$\xymatrix@C=0pt@R=12pt@M=8pt{  P_g \ar[rr]^{\mu}
                      \ar[rd]_{\vf}  & &  P_{g'} \ar[ld]^{\vf'} \\
         &   G_0 & }$$
commute.

\begin{rem}
  In contrast with $\mfZ(\Gamma,\bbG)$
  which is a 2-groupoid, $\mfZ'(\Gamma,\bbG)$ is a bigroupoid.
\end{rem}

\subsection{Functoriality of $\mfZ'(\Gamma,\bbG)$}
{\label{SS:globalfunctoriality}}

The bigroupoid $\mfZ'(\Gamma,\bbG)$ is in some sense more natural
than $\mfZ(\Gamma,\bbG)$, because it is actually  functorial with
respect to $\Gamma$-butterflies. That is, we do not need strong
butterflies (Definition \ref{D:strong}) in order  to define
pushforwards.

Let $B \: \bbH \to \bbG$ be a $\Gamma$-butterfly
             $$\xymatrix@C=8pt@R=6pt@M=6pt{ H_1 \ar[rd]^{k} \ar[dd]
                          & & G_1 \ar[ld]_{i} \ar[dd] \\
                            & B \ar[ld]^{p} \ar[rd]_{r}  & \\
                                       H_0 & & G_0       }$$
The bifunctor $B_* \: \mfZ'(\Gamma,\bbH) \to \mfZ'(\Gamma,\bbG)$ is
defined as follows.

\medskip
\noindent{\em Effect of $B_*$ on objects.}
 Let $(E,\rho)$ be an object in
 $\mfZ'(\Gamma,\bbH)$. The effect of $B_*$ is given by
        $$\xymatrix@C=8pt@R=6pt@M=6pt{
             & & H_1 \ar[ld]_{\iota} \ar[dd]^{\partial}    &&&&
             & & G_1 \ar[ld]  \ar[dd]^{\partial} \\
             & E \ar[ld]_{\pi} \ar[rd]^(0.4){\rho}  &
                        &&\overset{B_*}{\mapsto} &&
             & F \ar[ld] \ar[rd]^(0.4){\lambda}  &  \\
                                      \Gamma & &  H_0      &&&&
                                      \Gamma & &  G_0         }$$
where $B_*(E,\rho)$ is defined exactly as in
$\S$\ref{SS:functoriality}. Namely,
it is equal to $(F,\lambda)$ with
  $$F:=E\oux{H_0}{H_1}B \ \ \text{and} \ \
            \lambda \: F \to  G_0, \ (x,b)  \mapsto r(b).$$
Here, $ G_1$ acts on each component by right multiplication.

\medskip
\noindent{\em Effect of $B_*$ on arrows.} Let  $(t,h) \: (E,\rho)
\to (E',\rho')$ be an arrow in $\mfZ'(\Gamma,\bbH)$. Here,  $h$ is
equal to $(P,\vf)$, where $P$ is an $H_1$-torsor and  $\vf \: P \to
H_0$ is  an $H_1$-equivariant map, and $t \: E \to P\ox{H_1}E'$ is a
homomorphism. We define $B_*(t,h)$  to be $(s,g)$, where $g$ and $s$
are defined as follows.

Consider $Q:=P\oux{H_0}{H_1}B$, where $H_1$ acts on $B$ by right
multiplication via $k$. Since the images of $k$ and $i$ in $B$
commute, the right multiplication action of $G_1$ on $B$ via $i$
gives rise to a right action of $G_1$ on $Q$. It is easy that this
makes $Q$ into a right $G_1$-torsor. We have a $G_1$-equivariant map
  $$\chi \:  Q=P\oux{H_0}{H_1}B \to G_0$$
    $$(x,b) \mapsto r(b).$$
We define $g$ to be $(Q,\chi)$. The homomorphism $s \: F \to
{}^g\!F'$ is defined to be the composition
  {\small   $$\xymatrix{
         F= E\oux{H_0}{H_1}B     \ar[r]^{t\oux{H_0}{H_1}B}
               &
          (P\ox{H_1}E')\oux{H_0}{H_1}B   \ar[r]^(0.35){\eta^{-1}} &
          (P\oux{H_0}{H_1}B)\ox{G_1}(E'\oux{H_0}{H_1}B)
                     =  Q\ox{G_1}F'= {}^g\!F'
                                 }$$  }
For the convenience of the reader, let us clarify all the actions
appearing in the above expression, as well as define
the isomorphism  $\eta$.

In $(P\ox{H_1}E')\oux{H_0}{H_1}B$, the action of the first $H_1$ on
$E'$ is by right conjugation, and the action of the second $H_1$ on
$B$ is by right multiplication. The action of the second $H_1$ on
$P\ox{H_1}E'$ is by right multiplication via ${}^g\!\iota'$.
That is,   $(u,x)$ acted on by  $\al \in H_1$ is equal to
$(u,x\al^{\vf(u)})$.

In $ (P\oux{H_0}{H_1}B)\ox{G_1}(E'\oux{H_0}{H_1}B) $ all actions are
by right multiplication, except for the action of $G_1$ on the last
$B$ component which is by right conjugation.

Finally, the isomorphism $\eta$ is defined by
   $$\eta \:  (P\oux{H_0}{H_1}B)\ox{G_1}(E'\oux{H_0}{H_1}B)  \ \to \
             (P\ox{H_1}E')\oux{H_0}{H_1}B$$
            $$(u,b,y,c) \ \mapsto \  (u,y,bcb^{-1}).$$
We leave it to the reader to verify that this is indeed an isomorphism
of groups.

\medskip
\noindent{\em Effect of $B_*$ on 2-arrows.}
This is defined in the obvious way.

\subsection{Comparing $\mfZ'(\Gamma,\bbG)$ and $\mfZ(\Gamma,\bbG)$}
{\label{SS:comparison}}

Instead of defining  $\mfZ'(\Gamma,\bbG)$ as in $\S$\ref{SS:global},
we could  have imitated the definition of $\mfZ(\Gamma,\bbG)$ given
in $\S$\ref{SS:H1new}. We argue that this would not have been the
correct definition. Let us analyze what goes wrong with this naive
definition. There is a natural bifunctor
  $$\Psi \: \mfZ(\Gamma,\bbG) \to \mfZ'(\Gamma,\bbG)$$
which is the identity on objects and is fully faithful on hom
groupoids. This functor, however, misses many arrows in
$\mfZ'(\Gamma,\bbG)$. This is essentially because not every global
section of the quotient stack $[G_0/G_1]$ lifts to a global section
of $G_0$. Let us spell this out in more detail.

The functor $\Psi$  sends an arrow $(t,g)$ in  $\mfZ(\Gamma,\bbG)$,
where $g \in G_0$ and $t \: E \to E'$ is a group homomorphism (with
certain properties),  to  a pair $(\hat{g},\hat{t})$ in which
$\hat{g}$ is the pair $(G_1,\vf)$ with $G_1$ the trivial
$G_1$-torsor and $\vf \: G_1 \to G_0$ given by $\al \mapsto
g\partial(\al)$. It follows that  if an arrow in
$\mfZ'(\Gamma,\bbG)$ is in the image of $\Psi$, or is 2-isomorphic
to such an arrow, then its corresponding $G_1$-torsor $P$ is
trivial. The converse is also easily seen to be true.

\begin{prop}
   There is a natural bifunctor
      $$\Psi \: \mfZ(\Gamma,\bbG) \to \mfZ'(\Gamma,\bbG)$$
   which is the identity on objects and is fully faithful on hom
   groupoids. If $H^1(X,G_1)$ is trivial, then $\Psi$ is a
   biequivalence. In particular, in the case where everything is
   discrete (i.e., $X$ is a point),  $\Psi$ is a biequivalence.
\end{prop}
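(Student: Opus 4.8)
The plan is to take $\Psi$ to be the bifunctor already indicated in $\S$\ref{SS:comparison}. It is the identity on objects, and it sends an arrow $(t,g)$ of $\mfZ(\Gamma,\bbG)$, with $g\in G_0$ and $t\:E\to E'$, to the arrow $\Psi(t,g)=(\hat t,\hat g)$ of $\mfZ'(\Gamma,\bbG)$, where $\hat g=(G_1,\vf)$ is the trivial $G_1$-torsor together with $\vf\:G_1\to G_0$, $\al\mapsto g\partial(\al)$, and $\hat t$ is $t$ viewed as a map $E\to{}^{\hat g}\!E'$ under the canonical trivialization ${}^{\hat g}\!E'=G_1\ox{G_1}E'\cong E'$. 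A $2$-arrow $\mu\in G_1$ of $\mfZ(\Gamma,\bbG)$ goes to the corresponding isomorphism of trivial torsors. First I would check that $\Psi$ is a well-defined bifunctor. The only point needing attention is compatibility with composition: for trivial torsors the contracted product $G_1\ox{G_1}G_1$ is again the trivial torsor and $\vf\ox{G_1}\vf'$ computes to $\al\mapsto gg'\partial(\al)$, so $\hat g\,\hat{g'}\cong\widehat{gg'}$ canonically and the composite $\hat t''$ reduces to $t'\circ t$, matching the composition law $(t'\circ t,gg')$ of $\mfZ(\Gamma,\bbG)$. The resulting comparison $2$-cells are canonical and the coherence axioms hold by inspection; this part is routine.

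Next I would establish that $\Psi$ is fully faithful on hom-groupoids, i.e.\ that for fixed $1$-arrows $(t_1,g_1),(t_2,g_2)$ the induced map on $2$-arrows is a bijection. A $2$-arrow $\Psi(t_1,g_1)\Rightarrow\Psi(t_2,g_2)$ in $\mfZ'(\Gamma,\bbG)$ is an isomorphism between two trivial $G_1$-torsors compatible with the $\vf_i$ and with the $t_i$. Any isomorphism between trivial $G_1$-torsors is left multiplication by a unique global section $\mu\in G_1$; compatibility with the $\vf_i$ forces the relation $g_1\partial(\mu)=g_2$, and compatibility with the $t_i$ becomes, after transporting along the trivialization ${}^{\hat g}\!E'\cong E'$, exactly the conjugation relation $t_2=\mu^{-1}t_1\mu$. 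These are precisely the data of a $2$-arrow $(t_1,g_1)\Rightarrow(t_2,g_2)$ in $\mfZ(\Gamma,\bbG)$ ($\S$\ref{SS:H1new}), so $\Psi$ is bijective on $2$-arrows. I expect this to be the main obstacle: one must carefully unwind the action of $\mu\ox{G_1}E'$ under the trivialization and confirm that the torsor-compatibility triangle translates into $t_2=\mu^{-1}t_1\mu$ and not some twisted variant, and also record that the nonabelian pointed set $H^1(X,G_1)$ really classifies $G_1$-torsors, so that its triviality means every torsor is globally trivial.

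Finally, under the hypothesis that $H^1(X,G_1)$ is trivial I would prove essential surjectivity of $\Psi$ on each hom-groupoid. An arbitrary arrow of $\mfZ'(\Gamma,\bbG)$ has the form $(t,(P,\vf))$ with $P$ a $G_1$-torsor on $X$, and as already observed in $\S$\ref{SS:comparison} such an arrow is $2$-isomorphic to one in the image of $\Psi$ if and only if $P$ is trivial. When $H^1(X,G_1)=0$ every $G_1$-torsor is globally trivial, so one may choose a section $u_0\in P(X)$, obtain a torsor isomorphism $G_1\cong P$, and set $g_0:=\vf(u_0)\in G_0$; transporting $t$ along this isomorphism exhibits $(t,(P,\vf))$ as $2$-isomorphic to $\Psi(t_0,g_0)$ for a suitable arrow $(t_0,g_0)$ of $\mfZ(\Gamma,\bbG)$. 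Combined with the previous paragraph, each hom-groupoid functor is fully faithful and essentially surjective, hence an equivalence of groupoids; since $\Psi$ is the identity on objects it is trivially essentially surjective on objects, and therefore a biequivalence. The discrete case $X=\mathrm{pt}$ is the special instance in which the only $G_1$-torsor is the trivial one, so $H^1(X,G_1)$ vanishes automatically and $\Psi$ is a biequivalence.
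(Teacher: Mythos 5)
Your proposal is correct and follows essentially the same route as the paper, which proves this proposition via the discussion preceding it in $\S$\ref{SS:comparison}: define $\Psi$ by sending $g\in G_0$ to the trivial torsor $(G_1,\al\mapsto g\partial(\al))$, observe that an arrow of $\mfZ'(\Gamma,\bbG)$ is $2$-isomorphic to one in the image of $\Psi$ precisely when its $G_1$-torsor is trivial, and conclude biequivalence when $H^1(X,G_1)$ vanishes. Your write-up merely makes explicit the routine verifications (full faithfulness on $2$-arrows via the canonical trivialization ${}^{\hat g}\!E'\cong E'$, and essential surjectivity on hom-groupoids from triviality of all $G_1$-torsors) that the paper leaves to the reader.
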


To end this subsection, let us also recall two other differences
between $\mfZ(\Gamma,\bbG)$ and $\mfZ'(\Gamma,\bbG)$. The former is
a 2-groupoid and it is functorial only with respect to {\em strong}
$\Gamma$-butterflies. The latter is a bigroupoid
and is functorial with respect to all $\Gamma$-butterflies.

\subsection{Continuous, differentiable, algebraic, etc., settings}
{\label{SS:diff}}

The cocycle approach to  cohomology discussed in
$\S$\ref{S:0}-\ref{S:B} has the disadvantage that it is only
appropriate in the discrete setting. For instance, in the case where
$\Gamma$ and $\bbG$ are Lie,  both the differentiable and discrete
cocycles give the wrong cohomologies in general.

The butterfly approach, however, always gives the correct answer.
Let us elaborate this a little bit. For example, suppose that $M$ is
a manifold, $\Gamma$ is a Lie group bundle over $M$, and
$\bbG=[G_1\to G_0]$ a bundle of Lie crossed-modules. In this case,
an element in  $H^1(\Gamma,\bbG)$ is a diagram
   $$\xymatrix@C=8pt@R=6pt@M=6pt{
                          & & G_1 \ar[ld]_{\iota} \ar[dd] \\
                            & E \ar[ld]_{\pi} \ar[rd]^{\rho}  & \\
                                       \Gamma & & G_0       }$$
as in $\S$\ref{SS:global} in which $E$ is a Lie group bundle over
$M$, the diagonal sequence is short exact in the category of Lie
group bundles, and the map $\rho$ is differentiable. Two such
diagrams $(E,\rho)$ and $(E',\rho')$ give rise to the same
cohomology class in  $H^1(\Gamma,\bbG)$ if and only if there exists
a principal $G_1$-bundle $P$ over $M$, a $G_1$-equivariant
differentiable map of bundles
$\vf \: E \to  G_0$,  and an isomorphism of Lie group bundles
            $$f \: P\ox{G_1}E' \to E$$
such that:
 \begin{itemize}
    \item for every $u \in P$ and $y \in E'$,
           $\rho f(u,y)\cdot {}^{\pi'(y)}
                 \!\vf(u)=\vf(u)\cdot\rho'(y)$,
    \item for every $u \in P$ and $y \in E'$, $\pi f(u,y)=\pi'(y)$,
    \item for every $u \in P$ and $\al \in G_1$,
              $f(u,\iota'(\al^{\vf(u)}))=\iota(\al)$.
 \end{itemize}

Notice, in particular, that in the case where $M$ is a point, the
Lie group $E$ and the extension
   $$ 1 \to G_1 \to E \to \Gamma \to 1$$
are uniquely determined (up to isomorphism) by the the given element
in $H^1(\Gamma,\bbG)$ and can be thought as invariants of the given
cohomology class.

The same discussion is valid in the algebraic setting (where $G$ is
a group scheme, or an algebraic group, and $\bbG$ is a
crossed-module in group schemes, or algebraic groups),
or in the topological setting, etc.

\section{$H^i$ and gerbes}{\label{S:Gerbes}}

In this section, we give an interpretation of the 2-groupoid
$\mfZ'(\Gamma,\bbH)$ in terms of gerbes over the classifying stack
$B\Gamma$, and clarify the relation between our definition
of $H^i$ and the standard one in terms of gerbes.

Our set up is as follows. We fix a Grothendieck site $X$. When
working over the site $X$, by a group we  mean a sheaf of groups on
$X$, and by a crossed-module we mean a
crossed-module in sheaves of groups.

Given a sheaf of groups $\Gamma$ over $X$, we denote the classifying
stack of $\Gamma$ by $B\Gamma:=[\Gamma\backslash X]$. We sometimes
use the same notation for the Grothendieck site
$(X\!\downarrow\!B\Gamma)$ of objects in $X$ over $B\Gamma$.

Recall that to a  crossed-module $\bbG=[\partial \: G_1\to G_0]$
over $X$ we can associate a group stack $\mcG$ which is, by
definition, the quotient stack of the transformation groupoid
$[G_0\ltimes G_1\sst{} G_0]$. Note that the latter is a strict group
object  in the category of groupoids.

Our notational convention is that whenever we use the notation
$[G_0/G_1]$, we simply mean the quotient stack without the group
structure. When we want to take into account the group structure, we
use $\mcG$. For example, we will be considering
$[G_0/G_1]$ as a trivial right $\mcG$-torsor.

\subsection{Cohomology via gerbes}{\label{SS:cohomologygerbes}}

It is well-known that, for every group stack $\mcG$ over a
Grothendieck site $X$, $H^{i}(X,\mcG)$, $i=-1,0,1$ are defined as
follows:
\begin{itemize}
  \item $H^{-1}(X,\mcG)$ is the group of self-equivalences of
                the identity section of $\mcG$;
      this is an abelian group.
  \item $H^{0}(X,\mcG)$ is the group of global sections of
                $\mcG$ modulo transformation;
     this is a group,  not necessarily abelian.
  \item  $H^{1}(X,\mcG)$ is the set of isomorphism classes of (right)
      $\mcG$-torsors over $X$; this is a pointed set.
\end{itemize}

A  $\Gamma$-crossed-module $\bbG$ gives rise to a crossed-module
$\bbGg$, and the corresponding group stack $\mcGg$, on the
classifying stack $B\Gamma$. In the case where $X$ is a point, it is
straightforward (but rather tedious) to see that we have natural
isomorphisms
   $$H^i(\Gamma,\bbG)\cong H^i(B\Gamma,\mcGg), \ \ i=-1,0,1.$$
In fact, our definitions of $H^i(\Gamma,\bbG)$ given in
$\S$\ref{S:0} and $\S$\ref{S:1} were obtained by translating the
definition of $H^i(B\Gamma,\mcGg)$ to the cocycle language. (The
idea is to write down the descent data for a $\mcGg$-torsor on
$B\Gamma$ and see that we obtain the cocycles of  $\S$\ref{S:0} and
$\S$\ref{S:1}.)

The right $\mcGg$-torsors over $B\Gamma$ form a strict 2-groupoid.
The morphisms of this groupoid  are morphisms of $\mcGg$-torsors,
and the 2-arrows of it are transformations. Let $\mfZ(\Gamma,\mcG)$
be the full sub 2-groupoid of this 2-groupoid consisting of those
$\mcGg$-torsors which become isomorphic to the trivial $\mcG$-torsor
when pulled back to $X$ via the quotient map $X \to B\Gamma$.
(We do not fix the trivialization.)

\begin{prop}{\label{P:biequiv}}
 Let $\mfZ(\Gamma,\bbG)$ be as above and $\mfZ'(\Gamma,\bbG)$
 as in $\S$\ref{SS:global}. Then,  there is a biequivalence
   $$\Upsilon\: \mfZ'(\Gamma,\bbG) \to \mfZ(\Gamma,\mcG)$$
 which is natural up to higher coherences.
\end{prop}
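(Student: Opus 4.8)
The plan is to realize $\mcGg$-torsors over $B\Gamma$ by stacky descent along the atlas $q\colon X \to B\Gamma$ and to match the resulting descent data with the provisional group-stack description of $\S$\ref{SS:preliminary}. Recall that an object of that provisional 2-groupoid is a weak morphism of group stacks $r\colon \Gamma \to \mcG\rtimes\Gamma$ with $\pr_2\circ r=\id_\Gamma$, and that $\mfZ'(\Gamma,\bbG)$ was defined precisely by translating this picture into crossed-module language via the equivalence of butterflies and weak morphisms of group stacks \cite{ButterflyI}. I would therefore describe $\Upsilon$ directly on the explicit model $\mfZ'(\Gamma,\bbG)$, so that the contracted-product formulas of $\S$\ref{SS:global} can be checked against torsor data, while using the dictionary of \cite{ButterflyI} to pass freely to the group-stack side when that is more convenient.

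First I would construct $\Upsilon$ on objects. Given $(E,\rho)$, observe (as in Remark \ref{R:reformulation}) that $(\rho,\pi)\colon E \to G_0\rtimes\Gamma$ is a group homomorphism carrying $\iota\colon G_1 \hookrightarrow E$ to $G_1 \hookrightarrow G_0\rtimes\Gamma$; passing to quotient stacks yields a map $[E/G_1] \to \mcG\rtimes\Gamma$, while $[E/G_1]\to\Gamma$ is an equivalence since $\ker\iota=1$ and $\coker\iota=\Gamma$. This is exactly the weak section $r$ of $\S$\ref{SS:preliminary}, i.e. a weak $\Gamma$-equivariant trivialization datum for the trivial $\mcG$-torsor $[G_0/G_1]$. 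Descending $[G_0/G_1]$ along $q$ using $r$ produces a right $\mcGg$-torsor $\Upsilon(E,\rho)$ on $B\Gamma$ which is trivial over $X$. Concretely one uses $X\times_{B\Gamma}X\cong \Gamma\times X$: the gluing isomorphism over $\Gamma\times X$ is the $\mcG$-valued cochain $\si\mapsto r(\si)$, and the monoidal structure of $r$ supplies the cocycle $2$-isomorphism over $\Gamma\times\Gamma\times X$. After choosing a set-theoretic section of $\pi$, this descent datum has as its cocycle presentation precisely the $1$-cocycle $(p,\vep)$ of $\S$\ref{SS:relation}, so $\Upsilon$ realizes the dictionary of $\S$\ref{SS:cohomologygerbes} between cocycles and $\mcGg$-torsors.

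Next I would match morphisms and $2$-morphisms and then verify the biequivalence. An arrow of $\mcGg$-torsors, restricted over $X$, is an isomorphism of trivial $\mcG$-torsors, i.e. a global section of $\mcG=[G_0/G_1]$, which is exactly a pair $g=(P,\vf)$ with $P$ a right $G_1$-torsor and $\vf\colon P\to G_0$ a $G_1$-equivariant map, the datum $g$ appearing in the arrows of $\mfZ'(\Gamma,\bbG)$. Compatibility of such a section with the two descent data unwinds to an isomorphism $t\colon E \to {}^g\!E'=P\ox{G_1}E'$ of the kind described in $\S$\ref{SS:global} (the twisted torsor ${}^g\!E'$, the map ${}^g\!\rho'$, and the conjugated inclusion ${}^g\!\iota'$ were arranged to encode this very compatibility), and a $2$-arrow of torsors restricts to an isomorphism $\mu\colon P_g \to P_{g'}$ of $G_1$-torsors over $G_0$, matching the $2$-arrows of $\mfZ'(\Gamma,\bbG)$. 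This makes $\Upsilon$ fully faithful on hom-bigroupoids. Essential surjectivity is then immediate from descent: by definition every object of $\mfZ(\Gamma,\mcG)$ is trivial over $X$, hence arises from a descent datum and so lies in the image of $\Upsilon$ up to isomorphism. Thus $\Upsilon$ is a biequivalence, and its compatibility with the pushforward bifunctors of $\S$\ref{SS:globalfunctoriality} under $\Gamma$-butterflies holds up to coherent higher isomorphisms, yielding the asserted naturality up to higher coherences.

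I expect the hard part to be the $2$-categorical bookkeeping in the fully faithful step: checking that the explicit contracted-product formulas of $\S$\ref{SS:global} (for ${}^g\!E'$, for the composition of arrows via $P\ox{G_1}P'$, and for the isomorphism $\eta$) correspond on the nose to composition and compatibility of torsor morphisms, and then assembling the coherence isomorphisms that upgrade $\Upsilon$ from an object-level equivalence to a genuine trihomomorphism. This is exactly where I would lean on \cite{ButterflyI}, whose equivalence between butterflies and weak morphisms of group stacks converts these verifications into formal consequences of the group-stack picture of $\S$\ref{SS:preliminary}, rather than direct manipulations of the contracted products.
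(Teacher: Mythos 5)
Your proposal is correct and follows essentially the same route as the paper: the object-level construction (pass from $(E,\rho)$ to the equivalence $[E/G_1]\risom\Gamma$ and use it to put a $\Gamma$-equivariant structure on the trivial $\mcG$-torsor $[G_0/G_1]$, then descend along $X\to B\Gamma$) is exactly the paper's construction of $\Upsilon$, phrased via descent data instead of the explicit groupoid-level action $F_x$, $T_{(x,\be)}$; likewise your treatment of arrows as global sections $(P,\vf)$ of $[G_0/G_1]$ acting by left multiplication matches the paper's. The only cosmetic difference is that you conclude by full faithfulness plus essential surjectivity, whereas the paper exhibits the quasi-inverse explicitly via the difference map and the fiber product $E:=\Gamma\times_{\bar\rho,\mcG,q}G_0$ of (\cite{ButterflyI}, $\S$4.2.4), which is the same use of the butterfly--weak-morphism dictionary you invoke.
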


\begin{proof}
 We give an outline of the construction of this biequivalence.

\medskip
\noindent{\em Effect of $\Upsilon$ on objects.}
Let  $(E,\rho)$ be an object in   $\mfZ'(\Gamma,\bbG)$, as in the
diagram
        $$\xymatrix@C=8pt@R=6pt@M=6pt{
                 & & G_1 \ar[ld]_{\iota} \ar[dd]^{\partial} \\
                 & E  \ar[ld]_{\pi} \ar[rd]^(0.4){\rho}  & \\
                                      \Gamma & &  G_0       }$$
To this we want to associate a right $\mcGg$-torsor over $B\Gamma$.
Think of $[\iota \: G_1 \to E]$ as a crossed-module (via the
conjugation action of $E$ on $G_1$), and let $\tilde{\Gamma} :=[E/G_1]$ be
the corresponding group stack. That is, the underlying stack of
$\tilde{\Gamma} $ is the quotient stack of the groupoid 
$[E\ltimes G_1\sst{}E]$. As we pointed out at the beginning of this section, 
the latter is a strict group object in the category of groupoids. There is a
natural equivalence of group stacks $\phi \: \tilde{\Gamma} \to
\Gamma$ induced by $\pi$.

The map $\phi$ provides us a left action of $\tilde{\Gamma}$ on the
group stack $\mcG$ via that of $\Gamma$. We will show that there is
also a natural action of $\tilde{\Gamma}$ on the stack $[G_0/G_1]$
which makes the right $\mcG$-torsor structure of $[G_0/G_1]$
$\tilde{\Gamma}$-equivariant. After {\em choosing} an inverse for
$\phi$, this gives rise to an action of $\Gamma$ on the trivial
$\mcG$-torsor $[G_0/G_1]$. Passing  to $\Gamma$-quotients, we obtain
a $\mcGg$-torsor $\mathcal{P}$ on the classifying stack $B\Gamma$.

Let us now spell out the action of $\tilde{\Gamma}$ on the stack
$[G_0/G_1]$. We do this on the groupoid level. That is, we give a
left  action of $[E\ltimes G_1\sst{} E]$, viewed as a group object
in groupoids, on the groupoid $[G_0\ltimes G_1\sst{} G_0]$. To do
so, we give an automorphism $F_x$ from $[G_0\ltimes G_1\sst{} G_0]$ to
itself for every $x \in E$. Also, for every arrow $(x,\be)$ between
the objects $x$ and $y=x\iota(\be)$ in $[E\ltimes G_1\sst{} E]$, we
give a transformation $T_{(x,\be)} \: F_x \Rightarrow F_y$.

The effect of the automorphism $F_x$ on an object $g \in G_0$ of the
groupoid $[G_0\ltimes G_1\sst{} G_0]$ is given by
   $$g \ \mapsto \ \rho(x)\cdot {}^{\pi(x)}\!g.$$
Its effect on an arrow $(g,\al) \in G_0\ltimes G_1$ is given by
    $$(g,\al) \ \mapsto \ \big(\rho(x)\cdot {}^{\pi(x)}\!g\
             ,\ {}^{\pi(x)}\!\al\big). $$
The transformation  $T_{(x,\be)} \: F_x \Rightarrow F_y$ is defined
by
    $$T_{(x,\be)}(g):=\big(\rho(x)\cdot {}^{\pi(x)}\!g\ ,\
                       \be^{({}^{\pi(x)}\!g)}\big).$$
Here, $g \in G_0$ is viewed as on object and $\big(\rho(x)\cdot
{}^{\pi(x)}\!g\ ,\ \be^{({}^{\pi(x)}\!g)}\big) \in G_0\ltimes G_1$
as an arrow in the groupoid $[G_0\ltimes G_1\sst{} G_0]$.

\medskip
\noindent{\em Effect of $\Upsilon$ on arrows and 2-arrows.} Let
$(t,g)\: (E,\rho) \to (E',\rho')$ be an arrow in
$\mfZ'(\Gamma,\bbG)$. Let $\bar{g}$ be the global section of
$[G_0/G_1]$ over $X$ corresponding to $g=(P,\vf)$. It can be checked
that left multiplication on $[G_0/G_1]$ by $\bar{g}$ is
$\Gamma$-equivariant and it respects the right $\mcG$-torsor
structures.  (This is perhaps easiest to see by trivializing the
$G_1$-torsor $P$ over some open cover of $X$ and then showing that
the $\Gamma$-equivariance data are compatible along the
intersections of the open sets.) After passing to the
$\Gamma$-quotients, we obtain an equivalence of $\mcGg$-torsors
$\mathcal{P} \to \mathcal{P'}$ over $B\Gamma$. This defines the
effect of $\Upsilon$ on morphisms.

The definition of the effect of $\Upsilon$ on 2-arrows is
straightforward.

\medskip
\noindent{\em The inverse of $\Upsilon$.} Denote $B\Gamma$ by $\mcY$
for simplicity. Let $\mcP$ be a $\mcGg$-torsor over $\mcY$ which
becomes trivial after pulling back along the quotient map $q \: X
\to \mcY$. {\em Choose} a trivialization, that is, a map $f \: X \to
\mcP$ relative to $\mcY$. Let $\delta \: \mcP\times_{\mcY}\mcP \to
\mcGg$ be ``the'' difference map. That is, $\delta$ is a morphism
such that
  $$\mcP\times_{\mcY} \mcP \llra{(\pr_1,\delta)}
   \mcP\times_{\mcY} \mcGg$$
becomes an inverse to
      $$\mcP\times_{\mcY} \mcGg \llra{(\pr_1,\mu)}
                 \mcP\times_{\mcY} \mcP,$$
where $\mu$ stands  for the action. (Note that we have to make a
{\em choice} of the inverse, and $\delta$ depend on this choice.)

Consider the morphism
    $$\xymatrix@C=30pt{ X \times_{\mcY} X
       \ar[r]^{f\times_{\mcY} f} &
          \mcP\times_{\mcY}\mcP \ar[r]^{\delta} &
                         \mcGg}.$$
Observe that this morphism is over $\mcY$ and that $X \times_{\mcY}
X$ is naturally equivalent to $\Gamma$. Thus, we obtain a morphism
$\Gamma \to \mcGg$ fitting in a 2-cartesian diagram
      $$\xymatrix{ \Gamma \ar[r] \ar[d] & \mcGg \ar[d] \\
           X \ar[r]_q & \mcY }$$
Since the pullback of $\mcGg$ along $q$ is naturally equivalent to
$\mcG$, we obtain a morphism $\rho \: \Gamma \to \mcG$. This map can
be checked to be a crossed-homomorphism.

Now, we follow the argument of (\cite{ButterflyI}, $\S$4.2.4)
and set
   $$E:=\Gamma \times_{\bar{\rho} ,\mcG, q} G_0,$$
where $q \: G_0 \to \mcG$ is the quotient map. By
(\cite{ButterflyI}, $\S$4.2.5), $E$ fits in  a diagram
           $$\xymatrix@C=8pt@R=6pt@M=6pt{
                & & G_1 \ar[ld]_{\iota} \ar[dd]^{\partial} \\
                & E  \ar[ld]_{\pr_1} \ar[rd]^(0.4){\pr_2}  & \\
                                      \Gamma & &  G_0       }$$
with the desired properties.  This defines the effect of
$\Upsilon^{-1}$ on objects.

From the above construction it is clear how to define the effect of
$\Upsilon^{-1}$ on arrows and 2-arrows.
\end{proof}
\subsection{In the presence of a braiding}{\label{SS:braidinggerbes}}

In this subsection, we show that if $\bbG$ is endowed with a
$\Gamma$-equivariant braiding, then there is a product on
$\mfZ(\Gamma,\mcG)$ which makes it into a (weak)  group object in
the category of 2-groupoids. Our construction was conceived in a
discussion with E.~Aldrovandi and relies on the tools developed in
(\cite{ButterflyI}, $\S$7), to which we refer the reader for more
details.

Suppose that $\bbG$ is equipped with a $\Gamma$-equivariant
braiding. In this case, the butterfly of Example \ref{E:braided}
becomes $\Gamma$-equivariant. Therefore, we have a butterfly $\bbGg
\times \bbGg \to \bbGg$ over $B\Gamma$. This in turn gives rise to a
morphism $m \: \mcGg \times \mcGg \to \mcGg$ of group stacks over
$B\Gamma$. It follows that with this multiplication $\mcGg$ is a
group object in the category of group stacks over $B\Gamma$.

We can use the morphism $m$ to define a multiplication on
$\mfZ(\Gamma,\mcG)$ as follows. Let $\mathcal{P}_1$ and
$\mathcal{P}_2$ be $\mcGg$-torsors over $B\Gamma$. Then,
$\mathcal{P}_1\times\mathcal{P}_2$ is a $\mcGg\times\mcGg$-torsor.
The `extension of structure group' functor for the map $m \: \mcGg
\times \mcGg \to \mcGg$ applied to the $\mcGg\times\mcGg$-torsor
$\mathcal{P}_1\times\mathcal{P}_2$ gives a $\mcGg$-torsors
$\mathcal{P}_1\cdot\mathcal{P}_2$. This is the desired product of
$\mathcal{P}_1$ and $\mathcal{P}_2$. More precisely,
     $$\mathcal{P}_1\cdot\mathcal{P}_2:=
             (\mathcal{P}_1\times\mathcal{P}_2)
                           \ox{\mcGg\times\mcGg}\mcGg,$$
where the $\mcGg$ on the right is made into a left
$\mcGg\times\mcGg$-torsor via $m$.

The same construction can be used to define the product of morphisms
and 2-arrows of $\mfZ(\Gamma,\mcG)$.

In the case where the braiding on $\bbG$ is symmetric, $\bbGg$
becomes a symmetric braided crossed-module over $B\Gamma$ and the
multiplication $m \: \mcGg \times \mcGg \to \mcGg$ becomes braided
(\cite{ButterflyI}, $\S$7.2). That is, $m$ and $m\circ \tau$, where
$\tau \: \mcGg \times \mcGg \to \mcGg \times \mcGg$ is the switch
map, become isomorphic via a natural isomorphism satisfying the
well-known coherence relations. This implies that the product on
$\mfZ(\Gamma,\mcG)$ is braided. This braiding is compatible with the
braiding of $\mfZ(\Gamma,\bbG)$ under the equivalence of Proposition
\ref{P:biequiv}.

\section{Cohomology long exact sequence}{\label{S:Long}}

In this section, we show that to any short exact sequence of
$\Gamma$-crossed-modules and $\Gamma$-butterflies
one can associate a long exact sequence in cohomology
(Proposition \ref{P:longexact}).

\subsection{Short exact sequences of butterflies}{\label{SS:short}}

Let $\bbK \llra{C} \bbH$ and $\bbH \llra{B} \bbG$ be butterflies. We say that
      $$1 \to \bbK \llra{C} \bbH \llra{B} \bbG \to 1$$
is {\bf short exact}  if in the
diagram
         $$\xymatrix@C=8pt@R=6pt@M=6pt{
               K_1 \ar[dd] \ar[rd] & & H_1  \ar[ld] \ar[rd] \ar '[d][dd]
                                         & & G_1 \ar[ld] \ar[dd] \\
                     &   C  \ar[rd] \ar @{..>} [rr]^(0.35){\delta}
                                 \ar[ld] & & B \ar[ld] \ar[rd] & \\
                                K_0 &&       H_0 & & G_0       }$$
we can find an arrow $\delta \: C \to B$ such that the diagram  is
commutative and the sequence
  $$1\to K_1 \llra{} C \llra{\delta} B \llra{} G_0 \to 1$$
is exact. (Note that $\delta$ is not necessarily unique.)

\begin{ex}{\label{E:strictexact}}
    Assume $C$ and $B$ are strict butterflies, that is, they come
    from strict morphisms $(c_1,c_0) \: [K_1\to K_0] \to [H_1 \to
    H_0]$ and $(b_1,b_0) \: [H_1\to H_0] \to [G_1 \to G_0]$ of
    crossed-modules (\cite{Maps}, $\S$9.5). Then, the sequence
        $$\xymatrix@C=8pt@R=6pt@M=6pt{
             1 \ar[dd] \ar[rr] &&  K_1   \ar[dd]^{\partial_{\bbK}}
                        \ar[rr]^{c_1}  & &
               H_1    \ar[rr]^{b_1} \ar[dd]^{\partial_{\bbH}}  & &
               G_1  \ar[dd]^{\partial_{\bbG}}   \ar[rr]  && 1\ar[dd] \\
                                      &&&&    & & & & \\
            1 \ar[rr]  && \ar[rr]  K_0 \ar[rr]_{c_0}     &&
                                H_0 \ar[rr]_{b_0} & & G_0
                       \ar[rr]    && 1   }$$
    is exact if and only if there exists a map $\psi \: K_0 \to G_1$
    such that:
     \begin{itemize}
        \item for every $k, k' \in K_0$, $\psi(kk')=\psi(k)^{b_0c_0(k')}\psi(k')$,
        \item the images of $\partial_{\bbG}$ and $b_0$ generate $G_0$,
        \item the intersection of the kernels of $\partial_{\bbK}$ and $c_1$ is trivial,
        \item for every $k \in K_0$, $b_0c_0(k)\cdot\partial\psi(k)=1$,
        \item for every $\gamma \in K_1$, $b_1c_1(\gamma)\cdot\psi(\partial\gamma)=1$,
        \item if $k \in K_0$ and $\be \in H_1$ are such that $c_0(k)\partial\be=1$ and
           $\psi(k)=b_1(\be)$, then there exists $\gamma \in K_1$ such that
           $k=\partial\gamma$ and $\be=c_1(\gamma)^{-1}$,
        \item if $h \in H_0$ and $\al \in G_1$ are such that $b_0(h)\partial\al=1$,
           then there exist $k\in K_0$ and $\be \in H_1$ such that
           $h=c_0(k)\partial\be$ and $\al=b_1(\be)^{-1}\psi(k)$.
     \end{itemize}
  Observe that the above list of conditions is equivalent to the sequence
      $$ 1\to K_1 \to K_0\ltimes H_1 \to H_0 \ltimes G_1 \to G_0 \to 1$$
  being exact.  The maps in this sequence are as follows:
      $$K_1 \to K_0\ltimes H_1,
          \ \gamma \mapsto (\partial\gamma\, , \, c_1\big(\gamma^{-1})\big);$$
      $$K_0\ltimes H_1 \to H_0 \ltimes G_1, \ \ (k,\beta) \mapsto
          \big(c_0(k)\partial\beta\, , \, b_1(\beta)^{-1} \psi(k)\big);$$
      $$H_0 \ltimes G_1 \to G_0, \ (h,\alpha) \mapsto  b_0(h)\partial(\alpha).$$
\end{ex}

The proof of the following proposition  will appear in \cite{Extensions}.

\begin{prop}{\label{P:shortexact}}
   A sequence of crossed-modules and butterflies
           $$1 \to \bbK \llra{C} \bbH \llra{B} \bbG \to 1$$
   is exact if and only if the induced sequence
            $$1 \to \mcK \llra{C} \mcH \llra{B} \mcG \to 1$$
   of group  stacks  is exact in the sense of (\cite{ButterflyI}, $\S$6.2).
\end{prop}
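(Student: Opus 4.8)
The plan is to prove both implications through the dictionary of \cite{ButterflyI}, which identifies the bicategory of crossed-modules and butterflies with the bicategory of group stacks and weak morphisms, carrying $\bbK,\bbH,\bbG$ to $\mcK,\mcH,\mcG$ and the butterflies $C$, $B$ to the displayed weak morphisms. Under this dictionary the homotopy sheaves of a group stack $\mcG=[G_0/G_1]$ are $\pi_1\mcG=\ker\partial$ and $\pi_0\mcG=\coker\partial$. First I would unwind the definition of exactness of (\cite{ButterflyI}, \S6.2): a sequence $1\to\mcK\to\mcH\to\mcG\to 1$ of group stacks is exact precisely when it is a fiber sequence of pointed group stacks, and for objects of this kind \S6.2 characterizes this by exactness of the six-term sequence of homotopy sheaves
$$1\to\ker\partial_{\bbK}\to\ker\partial_{\bbH}\to\ker\partial_{\bbG}\to\coker\partial_{\bbK}\to\coker\partial_{\bbH}\to\coker\partial_{\bbG}\to 1.$$
Here the induced maps and the connecting homomorphism $\ker\partial_{\bbG}\to\coker\partial_{\bbK}$ are read off from the butterflies in the standard way (for $B$, an element $\beta\in\ker\partial_{\bbH}$ goes to the unique $\alpha\in\ker\partial_{\bbG}$ with $\iota_B(\alpha)=\kappa_B(\beta)$, and a class in $\coker\partial_{\bbH}$ goes to that of $\rho_B(x)$ for any $\pi_B$-preimage $x$). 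Calling this six-term sequence $(*)$, the proposition reduces to the purely group-theoretic equivalence between the short-exactness condition of \S\ref{SS:short} and exactness of $(*)$.

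To make this equivalence transparent I would reduce to the strict case. Using the localization of \cite{ButterflyI} I may first replace $\bbH$ so that $B$ becomes a strict morphism, then replace $\bbK$ so that $C$ does too, obtaining strict morphisms $\bbK\to\bbH\to\bbG$ representing the composable pair. In the strict case Example \ref{E:strictexact} identifies the short-exactness of \S\ref{SS:short} with exactness of the explicit four-term sequence
$$1\to K_1\to K_0\ltimes H_1\to H_0\ltimes G_1\to G_0\to 1,$$
which is exactly the sequence $(\dagger):=\big(1\to K_1\xrightarrow{\kappa_C}C\xrightarrow{\delta}B\xrightarrow{\rho_B}G_0\to 1\big)$ of \S\ref{SS:short} in this situation. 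This four-term sequence is a short exact sequence of the two-term complexes built from the crossed-module boundaries, and its associated long exact sequence in homotopy sheaves (obtained by a double application of the snake lemma, the connecting map of $(*)$ being precisely the snake homomorphism) is $(*)$. Thus in the strict case the equivalence $(\dagger)\Leftrightarrow(*)$ is a direct homological computation; since both notions of exactness are invariant under equivalence (fiber sequences on one side, and the condition of \S\ref{SS:short} on the other), the general statement follows.

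The reverse direction runs the same bookkeeping backwards: given exactness of $(*)$, together with the two short exact diagonals $1\to H_1\xrightarrow{\iota_C}C\xrightarrow{\pi_C}K_0\to 1$ and $1\to G_1\xrightarrow{\iota_B}B\xrightarrow{\pi_B}H_0\to 1$ and the compatibilities $\delta\iota_C=\kappa_B$, $\pi_B\delta=\rho_C$, one is forced to produce an arrow $\delta\: C\to B$ with $\ker\delta=\kappa_C(K_1)$ and $\operatorname{im}\delta=\ker\rho_B$, which is exactly the data required in \S\ref{SS:short}. The main obstacle I anticipate is not any single snake-lemma chase but the homotopical bookkeeping needed to license the reduction to the strict case: the composite $B\circ C$ is trivial only up to a coherent isomorphism rather than on the nose, the arrow $\delta$ of \S\ref{SS:short} is not unique, and identifying $\mcK$ with the \emph{homotopy} kernel of $\mcH\to\mcG$ (not a naive kernel of groups) requires tracking these coherences. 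Concretely, one must (i) establish the simultaneous strictification of the composable pair $C,B$, and (ii) verify that the condition of \S\ref{SS:short} is genuinely invariant under replacing either butterfly by an equivalent one. Step (ii) is where I expect the real work to lie; once it is in place, the equivalence with $(*)$ follows from the strict computation above.
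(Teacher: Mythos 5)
First, a point of reference: the paper does not actually prove Proposition \ref{P:shortexact} --- it states that the proof ``will appear in \cite{Extensions}'' --- so there is no argument in the text to measure yours against. Your overall strategy (translate both notions of exactness into the six-term sequence of homotopy sheaves $1\to\pi_1\mcK\to\pi_1\mcH\to\pi_1\mcG\to\pi_0\mcK\to\pi_0\mcH\to\pi_0\mcG\to 1$ and compare) is reasonable, and your identification of the strict case with the four-term sequence of Example \ref{E:strictexact} is correct.

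That said, the proposal leaves the hardest steps open. (1) The reduction to the strict case is not a formality: strictifying $B$ replaces $\bbH$ by an equivalent crossed-module built from the group $B$ itself, which changes the target of $C$; one must transport $C$ across this equivalence, strictify again, and --- more seriously --- show that the existential condition of $\S$\ref{SS:short} (existence of $\delta\: C\to B$ with $1\to K_1\to C\to B\to G_0\to 1$ exact) is both preserved and reflected under these replacements. Since $\delta$ is not unique and $B\circ C$ is only trivialized up to coherent $2$-isomorphism, this invariance is where the actual content of the proposition lives, and you explicitly defer it. (2) In the converse direction you say one ``is forced to produce'' the arrow $\delta$, but give no construction; the natural source is a trivialization of the composite butterfly $(C\times_{H_0}B)/H_1$ supplied by stack-level exactness, and extracting from it a genuine group homomorphism $C\to B$ commuting with both wings (rather than a mere section up to $H_1$) is an argument that has to be made, using exactness at $\mcH$. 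Two smaller caveats: you assume that $\S$6.2-exactness of group stacks is \emph{characterized} by exactness of the six-term homotopy-sheaf sequence, whereas \cite{ButterflyI} gives that sequence as a consequence; the converse you need is a separate five-lemma argument identifying $\mcK$ with the homotopy kernel, and should be spelled out. Finally, the ``double application of the snake lemma'' is invoked for non-abelian groups and crossed-modules, where the connecting map and exactness at each spot must be verified by hand rather than cited. As an outline the proposal is sound; as a proof it is missing precisely the constructions the proposition is about.
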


\subsection{Cohomology long exact sequence}{\label{SS:long}}

By applying (\cite{ButterflyI}, Proposition 6.4.1) to the site
$B\Gamma$ and making use of Proposition \ref{P:biequiv}, we
immediately obtain the following (also see \cite{CeFe}, Theorem 31).

\begin{prop}{\label{P:longexact}}
  Let $$1 \to \bbK \llra{C} \bbH \llra{B} \bbG \to 1$$ be a short
  exact sequence of $\Gamma$-crossed-modules and
  $\Gamma$-butterflies. Then, we have a long exact cohomology sequence
  {\small $$ \xymatrix@C=12pt@R=17pt{%
      1 \ar[r] & H^{-1} (\Gamma,\bbK) \ar[r] & H^{-1} (\Gamma,\bbH) \ar[r] &
      H^{-1} (\Gamma,\bbG) \ar[r] &
      H^{0} (\Gamma,\bbK) \ar `/8pt[d] `[lll] `[dlll] [dlll] \\
      & H^{0} (\Gamma,\bbH) \ar[r] & H^{0} (\Gamma,\bbG) \ar[r] &
      H^{1} (\Gamma,\bbK)
      \ar[r] & H^{1} (\Gamma,\bbH) \ar[r] & H^{1} (\Gamma,\bbG).
                }   $$  }
    (Remark that the connecting homomorphisms in  this long exact
    sequence depend on the choice of the homomorphism
    $\delta$ appearing in the definition of a short exact sequence.)
\end{prop}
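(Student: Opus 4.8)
The plan is to reduce the statement to the corresponding long exact sequence for group stacks over the classifying site $B\Gamma$, which is already available in \cite{ButterflyI}, and then transport it back through the gerbe interpretation of $\S$\ref{S:Gerbes}. Three ingredients are needed: Proposition \ref{P:shortexact}, to convert the short exact sequence of $\Gamma$-crossed-modules and $\Gamma$-butterflies into a short exact sequence of group stacks; the long exact cohomology sequence for group stacks, \cite{ButterflyI}, Proposition 6.4.1; and the identification $H^i(\Gamma,\bbG)\cong H^i(B\Gamma,\mcGg)$ from $\S$\ref{SS:cohomologygerbes}, realized at the level of $2$-groupoids by the biequivalence $\Upsilon$ of Proposition \ref{P:biequiv}.

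First I would pass to $B\Gamma$. The $\Gamma$-equivariant data assemble into crossed-modules $\bbK_\Gamma$, $\bbH_\Gamma$, $\bbGg$ and ordinary butterflies between them over $B\Gamma$, and the short exactness hypothesis of $\S$\ref{SS:short} (including the choice of $\delta$) transfers to short exactness of $1\to\bbK_\Gamma\to\bbH_\Gamma\to\bbGg\to1$ over $B\Gamma$. By Proposition \ref{P:shortexact} applied over the site $B\Gamma$, the induced sequence of group stacks
$$1 \to \mcK_\Gamma \llra{C} \mcH_\Gamma \llra{B} \mcGg \to 1$$
is exact in the sense of (\cite{ButterflyI}, $\S$6.2). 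The auxiliary map $\delta$ is exactly what is used to witness this exactness, which is the source of the dependence of the connecting maps on $\delta$ recorded in the statement.

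Next I would invoke \cite{ButterflyI}, Proposition 6.4.1 for this short exact sequence over the site $B\Gamma$, obtaining the long exact sequence in the group-stack cohomology groups $H^i(B\Gamma,-)$, $i=-1,0,1$, as recalled in $\S$\ref{SS:cohomologygerbes}. It then remains to rewrite each term via $H^i(\Gamma,\bbG)\cong H^i(B\Gamma,\mcGg)$ (and likewise for $\bbH$ and $\bbK$). These isomorphisms are obtained by applying $\pi_{1-i}$ to the biequivalence $\Upsilon \colon \mfZ'(\Gamma,\bbG)\to\mfZ(\Gamma,\mcG)$ of Proposition \ref{P:biequiv}, and to its analogues for $\mcH$ and $\mcK$. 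Because the functorial arrows in both long exact sequences are induced by the butterfly pushforwards $B_*$, $C_*$ of $\S$\ref{SS:globalfunctoriality} and by the corresponding extension-of-structure-group functors on torsors, the two sequences are identified term by term, yielding the asserted long exact sequence in $H^i(\Gamma,-)$.

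The hard part will be the compatibility of $\Upsilon$ with these pushforwards: one must check that, for each butterfly in the sequence, the square relating $\Upsilon$, the functor $B_*$ on the butterfly side, and the extension-of-structure-group functor on the gerbe side commutes up to coherent isomorphism. This is precisely the naturality ``up to higher coherences'' asserted in Proposition \ref{P:biequiv}, but pinning it down for the maps entering the connecting homomorphism (which is built from the boundary map of \cite{ButterflyI}, Proposition 6.4.1) is where the genuine care is required. Once this compatibility is established, the identification of the two long exact sequences---and in particular the $\delta$-dependence of the connecting maps---follows formally.
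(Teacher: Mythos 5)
Your proposal is correct and follows essentially the same route as the paper, which derives the result "by applying (\cite{ButterflyI}, Proposition 6.4.1) to the site $B\Gamma$ and making use of Proposition \ref{P:biequiv}" (with Proposition \ref{P:shortexact} supplying the translation of exactness to the group-stack level). Your additional remarks on the compatibility of $\Upsilon$ with the pushforwards simply make explicit the checks the paper leaves implicit.
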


The above proposition can be strengthened as follows (see \cite{CeFe},
Proposition 30).

\begin{prop}{\label{P:fibration}}
  Let
  $$1 \to \bbK \llra{C} \bbH \llra{B} \bbG \to 1$$
  be a short exact sequence of $\Gamma$-crossed-modules and
  $\Gamma$-butterflies. Then, the sequence
      $$\mfZ'(\Gamma,\bbK) \llra{C_*} \mfZ'(\Gamma,\bbH)
                \llra{B_*} \mfZ'(\Gamma,\bbG)$$
 is a fibration of 2-groupoids. The long exact sequence of
 Proposition \ref{P:longexact} is the fiber homotopy exact
 sequence associated to this fibration.
\end{prop}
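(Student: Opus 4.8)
The plan is to transport the entire statement to the setting of group stacks over the classifying stack $B\Gamma$, where a fibration of this type is already provided by \cite{ButterflyI}, and then to pull the conclusion back along the biequivalence of Proposition \ref{P:biequiv}. Concretely, I would first feed the given sequence $1 \to \bbK \llra{C} \bbH \llra{B} \bbG \to 1$ into Proposition \ref{P:shortexact}, obtaining a short exact sequence of group stacks $1 \to \mcK \to \mcH \to \mcG \to 1$ over $X$ in the sense of (\cite{ButterflyI}, $\S$6.2), and therefore a short exact sequence $1 \to \mcK_{\Gamma} \to \mcH_{\Gamma} \to \mcG_{\Gamma} \to 1$ of group stacks over $B\Gamma$. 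Under the biequivalences of Proposition \ref{P:biequiv}, the $2$-groupoids $\mfZ'(\Gamma,\bbK)$, $\mfZ'(\Gamma,\bbH)$, $\mfZ'(\Gamma,\bbG)$ are identified with $\mfZ(\Gamma,\mcK)$, $\mfZ(\Gamma,\mcH)$, $\mfZ(\Gamma,\mcG)$, and the pushforwards $C_*$, $B_*$ correspond to the extension-of-structure-group functors induced by $\mcK_{\Gamma} \to \mcH_{\Gamma} \to \mcG_{\Gamma}$. Verifying this last compatibility is a direct, if laborious, matter of matching the construction of $\Upsilon$ on arrows with the description of $B_*$ in $\S$\ref{SS:globalfunctoriality}.

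The fibration, together with its fiber homotopy exact sequence, is then exactly the output of (\cite{ButterflyI}, Proposition 6.4.1) applied to the above sequence over $B\Gamma$ --- the same input that already produced Proposition \ref{P:longexact}. Surjectivity of $\mcH_{\Gamma} \to \mcG_{\Gamma}$ supplies the lifting of $1$- and $2$-morphisms that makes $B_*$ a fibration of $2$-groupoids, while exactness of the sequence identifies the fiber of $B_*$ over the base object with the $2$-groupoid of $\mcK_{\Gamma}$-torsors: an $\mcH_{\Gamma}$-torsor together with a trivialization of its $B_*$-image reduces, by standard torsor theory, to a torsor under the kernel $\mcK_{\Gamma}$. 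Translating the homotopy groups through the dictionary $H^i(\Gamma,-) \cong \pi_{1-i}$ of $\S$\ref{SS:H1new} --- so that $\pi_2$, $\pi_1$, $\pi_0$ become $H^{-1}$, $H^0$, $H^1$ --- the homotopy exact sequence of this fibration is precisely the long exact sequence of Proposition \ref{P:longexact}.

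The step I expect to be the main obstacle is that $\mfZ(\Gamma,\mcG)$ (and likewise for $\bbH$, $\bbK$) is the \emph{full sub-}$2$-groupoid of those $\mcG_{\Gamma}$-torsors that become trivial after pullback to $X$, not the full $2$-groupoid of all torsors over $B\Gamma$ to which (\cite{ButterflyI}, Proposition 6.4.1) applies verbatim. I would therefore check that the fibration restricts cleanly to these sub-$2$-groupoids: that $B_*$ preserves the pullback-triviality condition, that the morphism- and $2$-morphism-lifts furnished by the reference can be arranged to stay trivializable over $X$, and that the reduction of a $B_*$-trivialized $\mcH_{\Gamma}$-torsor to a $\mcK_{\Gamma}$-torsor again lands in $\mfZ(\Gamma,\mcK)$. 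Each of these follows by working locally over $X$, where every torsor trivializes, using the descent compatibility already exploited in the proof of Proposition \ref{P:biequiv}. This confirms that the fiber is indeed $\mfZ'(\Gamma,\bbK)$ and exhibits the long exact sequence as the fiber homotopy exact sequence of the fibration.
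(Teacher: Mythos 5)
The paper itself omits the proof of Proposition \ref{P:fibration}, declaring it ``standard'' and pointing to \cite{CeFe}, Proposition 30, so there is no detailed argument to compare against. Your reduction to group stacks over $B\Gamma$ via Propositions \ref{P:shortexact} and \ref{P:biequiv} together with (\cite{ButterflyI}, Proposition 6.4.1) is exactly how the paper obtains the weaker Proposition \ref{P:longexact}, so the strategy is in the right spirit. The gap is in the first assertion of the statement. Being a fibration of 2-groupoids is a \emph{strict} lifting property of the particular 2-functor $B_*\colon \mfZ'(\Gamma,\bbH) \to \mfZ'(\Gamma,\bbG)$ (lifting of $1$-morphisms with prescribed source, lifting of $2$-morphisms, identification of the fiber), and such a property does not transport along a biequivalence such as $\Upsilon$: a 2-functor biequivalent to a fibration need not itself be one. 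Moreover (\cite{ButterflyI}, Proposition 6.4.1) delivers the long exact sequence attached to a short exact sequence of group stacks, not a fibration statement about these particular 2-groupoid models. So your argument essentially re-proves Proposition \ref{P:longexact} and shows that $B_*$ is a fibration ``up to biequivalence'', whereas the actual content of Proposition \ref{P:fibration} --- the lifting properties of the explicit $B_*$ of $\S$\ref{SS:globalfunctoriality}, deduced from the exactness of $1 \to K_1 \to C \to B \to G_0 \to 1$ --- still has to be verified directly. That direct verification is presumably the ``standard but not in the spirit of these notes'' computation being omitted.

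Your treatment of the restriction to pullback-trivial torsors is also too quick. Membership in $\mfZ(\Gamma,\mcG)$ is a \emph{global} triviality condition over $X$, so ``working locally over $X$, where every torsor trivializes'' does not address it. Concretely, an object of the homotopy fiber of $B_*$ over the base object is an $\mcH_{\Gamma}$-torsor $\mcP$ (trivializable over $X$) together with a trivialization of $B_*\mcP$ over $B\Gamma$; the standard reduction produces a $\mcK_{\Gamma}$-torsor $\mathcal{Q}$ for which only $\mathcal{Q}\ox{\mcK_{\Gamma}}\mcH_{\Gamma}$ is known to be trivializable over $X$, so the class of $\mathcal{Q}|_X$ a priori lies in the image of the connecting map $H^0(X,\mcG)\to H^1(X,\mcK)$ rather than being trivial. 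One must argue --- for instance by adjusting the chosen trivialization of $\mcP|_X$ using the essential surjectivity of $\mcH\to\mcG$ over $X$, or better by computing the fiber directly in the $\mfZ'$-picture, where objects are extensions $1\to K_1\to E'\to\Gamma\to 1$ and the exact sequence $1\to K_1\to C\to B\to G_0\to 1$ can be used explicitly --- that $\mathcal{Q}$ may be taken in $\mfZ(\Gamma,\mcK)$. This is precisely where the exactness hypothesis does its work and where your sketch needs to be filled in.
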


The proof of the above proposition is not hard (and one can say it
is ``standard''), but it is
not in the spirit of these notes, so we omit it.

The following is an immediate corollary of Proposition \ref{P:longexact}.

\begin{prop}{\label{P:short}}
  Let $\bbG \: [\partial \: G_1 \to G_0]$ be  a $\Gamma$-crossed-module. Then,
  we have the exact sequences
    {\small $$ \xymatrix@C=6pt@R=11pt@M=4pt{%
      1 \ar[rr] && H^1(\Gamma,\ker\partial) \ar[rr]
                && H^0(\Gamma,\bbG)  \ar[rr]
                &&   (\coker\partial)^{\Gamma}
                    \ar `/7pt[d] `[l] `[dlll] `[r] [dll] && \\
          &&& & H^2(\Gamma,\ker\partial) \ar[rr]
          && H^1(\Gamma,\bbG) \ar[rr] &&
            H^1(\Gamma, \coker\partial)  }   $$  }
 and
   {\small $$ \xymatrix@C=6pt@R=11pt@M=4pt{%
      1 \ar[rr] && H^{-1}(\Gamma,\bbG) \ar[rr] && G_1 \ar[rr]
      && G_0 \ar[rr] && H^{0}(\Gamma,\bbG)  \ar `/7pt[d]
           `[l] `[dlllll] `[r] [dllll] && \\ && &&
            H^{1}(\Gamma,G_1) \ar[rr] &&
         H^{1}(\Gamma,G_0) \ar[rr] && H^{1}(\Gamma,\bbG).}$$}
\end{prop}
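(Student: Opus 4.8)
The plan is to derive both sequences mechanically from the long exact sequence of Proposition \ref{P:longexact}, applied to two carefully chosen short exact sequences of $\Gamma$-crossed-modules, and then to identify the cohomologies of the two ``degenerate'' crossed-modules sitting at the ends with ordinary (shifted) group cohomology. For the first sequence I would feed in
$$1 \lra [\ker\partial \to 1] \lra \bbG \lra [1 \to \coker\partial] \lra 1,$$
the strict $\Gamma$-equivariant morphisms being the inclusion $\ker\partial \hookrightarrow G_1$ in degree $1$ and the quotient $G_0 \twoheadrightarrow \coker\partial$ in degree $0$. For the second I would use
$$1 \lra [1 \to G_1] \llra{\partial} [1 \to G_0] \lra \bbG \lra 1,$$
with the strict morphisms given by $\partial\colon G_1 \to G_0$ and by $(\,1\to G_1,\ \id_{G_0})$. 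Both consist of strict morphisms, hence are $\Gamma$-butterflies, so Proposition \ref{P:longexact} applies.

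The first order of business is to check that each of these is genuinely short exact in the sense of Proposition \ref{P:shortexact}; since everything is strict, I would verify the elementary criterion of Example \ref{E:strictexact}. For the first sequence the criterion collapses to the exactness of $1 \to \ker\partial \to G_1 \llra{\partial} G_0 \to \coker\partial \to 1$, which is immediate. For the second, taking $\psi(k):=k^{-1}$, it collapses to the exactness of
$$1 \lra G_1 \lra G_0\ltimes G_1 \lra G_0 \lra 1,$$
with maps $k \mapsto (\partial k, k^{-1})$ and $(h,\al)\mapsto h\partial\al$; here the only nontrivial point is the cocycle identity $\psi(kk')=\psi(k)^{\partial k'}\psi(k')$, which is exactly the Peiffer identity $(k^{-1})^{\partial k'}=k'^{-1}k^{-1}k'$.

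Next I would record the two bookkeeping computations, both direct from the cocycle descriptions of $\S$\ref{S:0}--\ref{S:1}. For an abelian $\Gamma$-module $A$ one finds $H^i(\Gamma,[A\to1])\cong H^{i+1}(\Gamma,A)$ for $i=-1,0,1$: a $0$-cocycle of $[A\to1]$ is precisely a crossed-homomorphism $\Gamma\to A$ and a $1$-cocycle is precisely a $2$-cocycle, so $[A\to1]$ is the shift $A[1]$. Dually, for a (possibly nonabelian) $\Gamma$-group $G$ one gets $H^{-1}(\Gamma,[1\to G])=1$ and $H^i(\Gamma,[1\to G])\cong H^i(\Gamma,G)$ for $i=0,1$, with $H^0(\Gamma,[1\to G])=G^{\Gamma}$. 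Substituting these into the nine-term sequence of Proposition \ref{P:longexact} and using that the leading map $H^0(\Gamma,\ker\partial)\to H^{-1}(\Gamma,\bbG)$ is an isomorphism while $H^{-1}(\Gamma,[1\to\coker\partial])=1$ truncates the first sequence to the displayed six terms; similarly the two leading terms $H^{-1}(\Gamma,[1\to G_i])=1$ truncate the second to the displayed seven terms.

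The routine-but-load-bearing step is the short-exactness verification above, together with the naturality needed to identify the outgoing maps as the expected ones (the inclusion-induced $H^1(\Gamma,\ker\partial)\hookrightarrow H^0(\Gamma,\bbG)$, the connecting map $(\coker\partial)^{\Gamma}\to H^2(\Gamma,\ker\partial)$, and $\partial_*$ in degree $0$). I expect the only genuine subtlety to be notational rather than conceptual: in the second sequence the objects written $G_1$ and $G_0$ must be read as the invariants $H^0(\Gamma,G_1)=G_1^{\Gamma}$ and $H^0(\Gamma,G_0)=G_0^{\Gamma}$, since that is what $H^0$ of $[1\to G_i]$ computes. With this reading exactness at $G_1^{\Gamma}$ says precisely that the image of $H^{-1}(\Gamma,\bbG)=(\ker\partial)^{\Gamma}$ equals $\ker\big(\partial\colon G_1^{\Gamma}\to G_0^{\Gamma}\big)=(\ker\partial)^{\Gamma}$, so the sequence is exact as claimed.
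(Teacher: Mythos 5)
Your proof is correct and follows exactly the paper's route: both sequences are obtained by feeding the short exact sequences $1 \to [\ker\partial\to 1]\to\bbG\to[1\to\coker\partial]\to 1$ and $1\to G_1\to G_0\to\bbG\to 1$ into Proposition \ref{P:longexact}, with exactness verified via Example \ref{E:strictexact}, and then identifying the cohomology of the degenerate crossed-modules with shifted ordinary group cohomology. Your observation that the terms written $G_1$ and $G_0$ in the second sequence must be read as the invariants $G_1^{\Gamma}=H^0(\Gamma,G_1)$ and $G_0^{\Gamma}=H^0(\Gamma,G_0)$ is also the correct reading of the statement.
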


\begin{proof}
  For the first sequence, apply Proposition \ref{P:longexact} to
  the short exact  sequence of crossed-modules
    $$1 \to [\ker\partial \to 1] \to \bbG \to [1\to \coker\partial]\to 1.$$
  For the second sequence, apply Proposition \ref{P:longexact} to
  the short exact sequence
    $$ 1 \to G_1 \to G_0 \to \bbG \to 1.$$
  (To see why these two sequences of crossed-modules are exact use Example
  \ref{E:strictexact}.)
\end{proof}

\begin{rem}
The first exact sequence in Proposition \ref{P:short} can be
extended by adding an  $H^3(\Gamma,\ker\partial) $ to the right end
of it. We do not have the tools to give a systematic proof here but
with some effort one can prove it by hand. Also, in the second exact
sequence, if $G_1$ is abelian, the sequence can be extended by
$H^2(\Gamma,G_1)$. If $G_0$ is also abelian, then
the sequence can be extended further by $H^2(\Gamma,G_0)$.
\end{rem}

\begin{rem}
   The inclusion map $\ker\partial \to G_1$ and the projection map
   $G_0 \to \coker\partial$ induce maps $H^i(\Gamma, \ker\partial)
   \to H^i(\Gamma,G_1)$ and $H^i(\Gamma,G_0) \to H^i(\Gamma,
   \coker\partial)$. These maps intertwine the two
   exact sequences of Proposition \ref{P:short}  into a commutative diagram.
\end{rem}

\section{Appendix A: homotopy theoretic interpretation}{\label{A:0}}

In the previous sections we discussed three approaches to group cohomology with coefficients
in a $\Gamma$-equivariant 
crossed-module $\bbG$. 
In each approach we constructed a pointed homotopy 2-type (namely, the 
crossed-module in groupoids 
$\mcK^{\leq 1}(\Gamma,\bbG)$, and the 
2-groupoids $\mfZ(\Gamma,\bbG)$ and $\mfZ'(\Gamma,\bbG)$) whose homotopies give the 
desired group cohomologies. We showed that when $\bbG$ is braided (i.e., deloopable), then
the associated pointed homotopy 2-types are deloopable. If $\bbG$ is symmetric 
(i.e., double, hence 
infinitely, deloopable) then the associated pointed homotopy 2-types are double (hence,
infinitely) deloopable. 

We exhibited natural equivalences between these three pointed homotopy 2-types that
respect deloopings, and constructed cohomology exact sequences that  look like
(and indeed are, as we see below) the homotopy fiber exact sequence of a fibration.

There is a simple conceptual reasoning behind all this  that we would like 
to discuss in this appendix.  In what follows, we will be working in an  
`$(\infty,1)$-category $\mathbf{\infty Grpd}_S$ of $\infty$-groupoids
over a base $S$'. There are several ways to make sense of the phrase in quotes, 
each giving rise to a different approach to our group cohomology problem (we have seen three
so far). 
To mention a few more examples, we can take $S$ to be a topological space (say $B\Gamma$, the 
classifying space of $\Gamma$), and $\mathbf{\infty Grpd}_S$ the 
$(\infty,1)$-category of topological spaces over $S$.
Or we can take $S$ to be a category (say, the one-object category $B\Gamma$ 
with  morphisms  $\Gamma$) and $\mathbf{\infty Grpd}_S$ the 
$(\infty,1)$-category of $\infty$-groupoids fibered over 
$S$. Or we can take $S$ to be
a Grothendieck site (say, the site of the quotient stack $B\Gamma=[*/\Gamma]$) and 
$\mathbf{\infty Grpd}_S$ the $(\infty,1)$-category of simplicial (pre)sheaves 
(or $\infty$-stacks) over $S$.

\subsection{Definition of cohomology} 
 
Let us drop $S$ from the notation and denote
$\mathbf{\infty Grpd}_S$ by $\mathbf{\infty Grpd}$.
Let $X$ and $C$ be  objects in  $\mathbf{\infty Grpd}$.\footnote{Since we can always 
enlarge the base $S$ through a base change, there is no loss of generality in
assuming that $X$ is equal to the base $S$, i.e., it is a ``point''. But we will not do that here.}  
Assume that $C$ is pointed.
We think of $C$ as the `coefficients' of
our cohomology theory. We define
  $$H^0(X,C):=\pi_0\mathbf{\infty Grpd}(X,C),$$
where  $\mathbf{\infty Grpd}(X,C)$ is the pointed 
$\infty$-groupoid of morphisms from $X$ to $C$.
If  $\Omega \: \mathbf{\infty Grpd}_* \to \mathbf{\infty Grpd}_*$ is a loop
functor (so, $\Omega(Y)$  is homotopy equivalent to the homotopy fiber
product over $Y$ of the base point of $Y$ with itself), we  define
  $$H^{-n}(X,C):=\pi_0\mathbf{\infty Grpd}(X,\Omega^nC)=H^0(X,\Omega^nC), \ n\geq 0.$$
To define $H^n$ for positive $n$ we need to deloop $C$. If we choose an $n$-delooping of
$C$ and denote it by $\operatorname{B}^n\!C$, we can then define
   $$H^{n}(X,C):=\pi_0\mathbf{\infty Grpd}(X,\operatorname{B}^n\!C)
   =H^0(X,\operatorname{B}^n\!C), \ n\geq 0.$$
(This explains why to define all $H^n$ the coefficients $C$ should be taken to
be a spectrum, i.e, we need to fix an infinite sequence of iterated deloopings for $C$.)  

\begin{ex}{\label{E:example}}
Two examples to keep in mind are the following. 

\begin{itemize}

\item[1)]
When $A$ is a discrete abelian group (and the base $S$ is a point), 
$\operatorname{B}^n\!A$ is the $n$-th 
Eilenberg-MacLane object $K(n,A)$  which can be realized as the
one-object $\infty$-groupoid whose $n$-morphisms are $A$ and whose $k$-morphisms, 
$k\neq n$,
are only the identities. In the relative case over $S$, this construction needs to be adjusted
according to the base $S$.
For example, if $S$ is a Grothendieck site, then we take a sheaf $A_S$
of abelian groups over $S$ and use a sheafified version of the
above construction. In the case where $S=B\Gamma$ is the one-object category with morphisms
$\Gamma$, and $A$ is a $\Gamma$-equivariant abelian group, we need to work with
the action groupoid $A_{\Gamma}$ of the action of $\Gamma$ on $A$. (We view $A_{\Gamma}$  
as a groupoid fibered over $B\Gamma$). The delooping
 $\operatorname{B}^n\!A_{\Gamma}$, $n\geq 1$, is
the Eilenberg-MacLane object $K(n,A)\rtimes\Gamma$ of $A_{\Gamma}$ which is,
by definition, generated from $K(n,A)$
by adding 1-morphisms coming from $\Gamma$.

In this situation, 
the cohomologies defined above correspond to the usual cohomologies $H^n(X,A_S)$
(sheaf cohomology in the first case, and group cohomology in the second).

\item[2)] 
If $\bbG=[G_1 \to G_0]$ is a crossed-module and $\mcG$  its associated groupoid,
then $\mcG$ can always be delooped once. We denote its delooping  by 
$\operatorname{B}\!\mcG$. It is the 2-group associated to $\bbG$, that is,
the one-object 2-groupoid with morphisms $G_0$ and 2-morphisms $G_1\ltimes G_0$. 
If $\bbG$ is braided, then we can deloop once more to get $\operatorname{B}^2\!\mcG$. 
If $\bbG$ is symmetric, then $\mcG$ can be
delooped infinitely many times. 

The same thing is true in the relative case. For example,
if $S=B\Gamma$ is  the one-object category 
with morphisms $\Gamma$, and $\bbG$ is a $\Gamma$-equivariant crossed-module, 
then $\mcG$ should be replaced by the translation groupoid $\mcG_{\Gamma}$ of the
action of $\Gamma$ on $\mcG$.  The groupoid $\mcG_{\Gamma}$
is fibered over $B\Gamma$ and
is always  deloopable. Its delooping $B\mcG_{\Gamma}$
is the 2-group associated to the crossed-module $\bbG\rtimes \Gamma$.

It follows that, in general, for a crossed-module $\bbG_S$ over a base $S$, we can define 
  $$H^n(X,\bbG_S):= H^{n-1}(X,\operatorname{B}\!\mcG_S), \ n\leq 1.$$ 
(It turns out that  $H^n(X,\bbG_S)$ is trivial for 
$n\leq -2$ as $\Omega^2\mcG_{\Gamma}$ is contractible.) 
If $\bbG_S$ is braided, then we can also define $H^2(X,\bbG_S)$.
In the symmetric case, it is possible to define all $H^n(X,\bbG_S)$.

\end{itemize}

\end{ex}

Since the internal hom functor $\mathbf{\infty Grpd}(X,-)$ preserves
homotopy limits, hence in particular the homotopy fiber product of the base point with itself
that defines loop objects, it follows that the pointed $\infty$-groupoid 
$\mathbf{\infty Grpd}(X,C)$ packages the cohomology groups in the following way:
  $$H^{i}(X,C)\cong \pi_{-i}\mathbf{\infty Grpd}(X,C), \ i\leq 0.$$
If $\operatorname{B}^n\!C$ is an $n$-delooping of $C$, we have
     $$H^{i}(X,C)\cong \pi_{n-i}\mathbf{\infty Grpd}(X,\operatorname{B}^n\!C), i\leq n.$$
If we apply this to Example \ref{E:example} (2) above, we find that  
$\mathbf{\infty Grpd_{B\Gamma}}(B\Gamma,\operatorname{B}\!\mcG_{\Gamma})$ 
is a pointed 2-type
that calculates group cohomology with coefficients in $\bbG$, that is,
 $$H^i(\Gamma,\bbG)=\pi_{1-i}
   \mathbf{\infty Grpd_{B\Gamma}}(B\Gamma,\operatorname{B}\!\mcG_{\Gamma}), \ i=-1,0,1.$$   

The 
pointed crossed-module in groupoids 
$\mcK^{\leq 1}(\Gamma,\bbG)$, and the pointed
2-groupoids $\mfZ(\Gamma,\bbG)$ and $\mfZ'(\Gamma,\bbG)$ are all models for
the pointed 2-type
$\mathbf{\infty Grpd_{B\Gamma}}(B\Gamma,\operatorname{B}\!\mcG_{\Gamma})$.

\subsection{Cohomology long exact sequence} 

Let $A$, $B$ and $C$ be pointed objects in $\mathbf{\infty Grpd_{S}}$ viewed as 
coefficients. Assume that they fit in a (homotopy) fibration sequence
 $$A \to B \to C.$$
This sequence can  be extended to a sequence
 $$\cdots \to \Omega^2C \to \Omega A \to \Omega B \to \Omega C \to
  A \to B \to C$$
in which each term is the homotopy fiber of the morphism between the next two terms.
Applying the functor $\pi_0\mathbf{\infty Grpd}(X,-)$, we get an exact
sequence of pointed cohomology sets
     {\small $$ \xymatrix@C=12pt@R=17pt{%
      \cdots \ar[r] &  H^{-2}(X,A) \ar[r] &  H^{-2}(X,B) \ar[r] &
      H^{-2}(X,C)\ar[r] &
      H^{-1}(X,A) \ar `/8pt[d] `[lll] `[dlll] [dlll] \\
      & H^{-1}(X,B) \ar[r] & H^{-1}(X,C) \ar[r] &
       H^{0}(X,A)
      \ar[r] &  H^{0}(X,B) \ar[r] &  H^{0}(X,C).
                }   $$  }
               
If we are in the situation of Example \ref{E:example} (2), and 
    $$1 \to \bbK_S \to \bbH_S \to \bbG_S \to 1$$
is a short exact sequence of crossed-modules over $S$, then   
  $$\operatorname{B}\mathcal{K}_S \to \operatorname{B}\mcH_S \to 
    \operatorname{B}\mcG_S$$
is a fibration sequence in $\mathbf{\infty Grpd_{S}}$ and the above
cohomology exact sequence coincides
with the one of Proposition \ref{P:longexact}.
             
\section{Appendix B: review of 2-crossed-modules and braided
crossed-modules}{\label{A:1}}

For the convenience of the reader, in this appendix we collect some
elementary facts about braided crossed-modules and 2-crossed-modules
\cite{Brown-Gilbert, Conduche}. We begin with some definitions.

A {\bf crossed-module in groupoids} (\cite{Brown-Gilbert}, page 54) is a
morphism of groupoids
   $$\mathcal{M} \llra{\partial} \mathcal{N}$$
such that  $\mathcal{M}=\coprod_{x \in
\Ob(\mathcal{N})}\mathcal{M}(x)$ is a disjoint union of groups
indexed by the set of objects of $\mathcal{N}$. We also have a right
action of  $\mathcal{N}$ on $\mathcal{M}$ such that an arrow $g \in
\mathcal{N}(x,y)$ takes $\al \in \mathcal{M}(x)$ to $\al^g \in
\mathcal{M}(y)$. We require that $\partial$ satisfy the two axioms
of a crossed-module. That is, $\partial$ is
$\mathcal{N}$-equivariant for the right conjugation action of
$\mathcal{N}$ on itself, and for every two arrows $\al, \be$ in
$\mathcal{M}$, we have $\al^{\partial\be}=\be^{-1}\al\be$.

Any crossed-module $[M \to N]$ gives rise to  a crossed-module
in groupoids
     $$\big[M \to [N \sst{} 1]\big].$$
Conversely, to any object $x$ in crossed-module in groupoids
$[\partial \: \mathcal{M} \to \mathcal{N}]$  we can associate a
crossed-module $[\partial_x \: \mathcal{M}(x)  \to \mathcal{N}(x)]$
which we call the {\em automorphism crossed-module} of $x$. Here, by
$\mathcal{N}(x)$ we mean the automorphism group of the object $x \in
\mathcal{N}$.

A {\bf $2$-crossed-module} (\cite{Conduche}, Definition 2.2; also
see \cite{Brown-Gilbert}, page 66), is a sequence
  $$[L \llra{\partial} M \llra{\partial} N]$$
of groups endowed with a right action action of $N$ on $M$ and $L$,
a right action of $M$ on $L$, and a bracket $\{,\} \: M\times M \to
N$ satisfying the following axioms:
   \begin{itemize}
      \item Let $N$ act on itself by right conjugation. Then both
         differentials $\partial$ are $G_1$-equivariant, and $\partial^2=0$;
      \item  For every $g,h \in M$, $\partial\{g,h\}=g^{-1}h^{-1}gh^{\partial g}$;
      \item  For every $g \in M$ and $\al \in L$, $\{\partial\al,g\}=\al^{-1}\al^g$ and
        $\{g,\partial\al\}=(\al^{-1})^g\al^{\partial g}$;
      \item For every $g,h,k \in M$, $\{g,hk\}=\{g,k\}\{g,h\}^{k^{\partial g}}$;
      \item For every $g,h,k \in M$, $\{gh,k\}=\{g,k\}^h\{h,k^{\partial g}\}$;
      \item or every $g,h\in M$ and $x \in N$, $\{g,h\}^x=\{g^x,h^x\}$.
   \end{itemize}

By setting $N=\{1\}$ in the definition of a 2-crossed-module, we
obtain the definition of a braided crossed-modules. More precisely,
a crossed-module
           $$[L \to M]$$
is {\bf braided} if it is endowed with a bracket
$\{,\} \: M\times M \to L$ which satisfies the
following axioms:
     \begin{itemize}
      \item  For every $g,h \in M$, $\partial\{g,h\}=g^{-1}h^{-1}gh$;
      \item  For every $g \in M$ and $\al \in L$,
        $\{\partial\al,g\}=\al^{-1}\al^g$ and
        $\{g,\partial\al\}=(\al^{-1})^g\al$;
      \item For every $g,h,k \in M$, $\{g,hk\}=\{g,k\}\{g,h\}^{k}$;
      \item For every $g,h,k \in M$, $\{gh,k\}=\{g,k\}^h\{h,k\}$.
   \end{itemize}

Any 2-crossed-module $[L\to M \to N]$ gives rise to a crossed-module
in groupoids
    $$\big[\coprod_{x \in N}L(x) \to [N\times M \sst{} N]\big],$$
where $L(x)=L$ and  $[N\times M \sst{} N]$ is the action groupoid of
the right multiplication action of $M$ on $N$ via $\partial$. If we
view $1 \in N$  as an object in the above crossed-module in
groupoids, its automorphism crossed-module is equal to $[L \to \ker
\partial]$. This is a braided crossed-module. Conversely, any
braided crossed-module $[L \to M]$
gives rise to a 2-crossed-module
    $$[L \llra{\partial} M \llra{\partial} 1].$$

A braided crossed-module $[L \to M]$ is {\bf symmetric} if for every
$g, h \in M$ we have
   $$\{g,h\}\{h,g\}=1.$$
If, in addition, we have
   $$\{g,g\}=1$$
for every $g \in M$, we say that $[L \to M]$ is {\bf Picard}.

The above observation about braided crossed-modules can be used 
to define a {\bf braided  2-crossed-module} as follows.\footnote{To our knowledege,
braided 2-crossed-modules were first defined in P.~Carrasco's thesis \oldcite{Carrasco}.} 
We say that a 
2-crossed-module 
   $$[K \llra{\partial} L \llra{\partial} M]$$
is   braided if the sequence
  $$K \llra{\partial} L \llra{\partial} M \llra{\partial} 1$$
is endowed with the structure of a 3-crossed-module in the sense of \cite{ArKuUs}, 
Definition 8.
That is, we have seven brackets
  $$\begin{array}{rl}
      \{,\}_{(1)(0)}, \ \ \{,\}_{(0)(2)},\ \ \{,\}_{(2)(1)}  : & L\times L \to K \\
      \{,\}_{(1,0)(2)}, \ \ \{,\}_{(2,0)(1)}  : & M\times L \to K \\
      \{,\}_{(0)(2,1)}  : &L\times M \to K \\ 
       \{,\}  : & M\times M \to L
   \end{array}$$
satisfying axioms ($\mathbf{3CM1}$)-($\mathbf{3CM18}$) 
of {\em loc.~cit.}\footnote{We need to modify the axioms of {\em loc.~cit} to account for the 
fact that our conventions for the actions (left or right) and the 
brackets, and as a consequenece our 
2-crossed-module axioms, are different from those of {\em loc.~cit}.} In fact, 
it follows from the axioms that 
the two brackets $\{,\}_{(1)(0)}$ and $\{,\}_{(0)(2)}$ are determined by $\{,\}_{(2)(1)}$, 
and $\{,\}_{(2)(1)}$ itself is the bracket that already 
comes with the 2-crossed-module. So, to put a braiding on a given 2-crossed-module we
have to introduce four new brackets, namely,  the last four in the above list.   
(In our application in $\S$\ref{SS:symmetric}, three of these four
brackets are trivial and only $\{,\}  :  M\times M \to L$ is non-trivial.)

\begin{rem}
  It is  useful to keep in mind the homotopy theoretic
  interpretations of the above notions. A crossed-module corresponds
  to a pointed homotopy 2-type. A crossed-module in groupoids
  corresponds to an arbitrary homotopy 2-type. A 2-crossed-module
  corresponds to a pointed homotopy 3-type. Associating a
  crossed-module in groupoids to a 2-crossed-module corresponds to
  taking the based loop space. The 2-crossed-module associated to a
  braided crossed-module corresponds to delooping.
\end{rem}

\subsection{Cohomologies of a crossed-module in groupoids}{\label{SS:cohs}}

To be compatible with the rest of the paper, we make the (unusual)
assumption that our crossed-module in groupoids $[\partial
\:\mathcal{M} \to \mathcal{N}]$ is sitting in degrees $[-1,1]$. That
is, we think of objects of $\mathcal{N}$ as sitting in degree $1$,
its arrows in degree $0$, and arrows of $\mathcal{M}$ in degree
$-1$. We then define $H^{1}$ to be the set of connected components
of $\mathcal{N}$; this is just a set. For a  fixed a base point $x
\in \Ob(\mathcal{N})$, we define $H^0$ and $H^{-1}$ to be,
respectively, the cokernel and the kernel of $\partial_x$ in the
automorphism crossed-module $[\partial_x \:\mathcal{M}(x) \to
\mathcal{N}(x)]$ of $x$. Note that $H^0$ is a group and $H^{-1}$ is
an abelian group

In the case where our crossed-module in groupoids comes from a
2-crossed-module $L \to M\to N$,  concentrated in degrees $[-1,1]$,
the cohomologies defined above are naturally isomorphic to the
cohomologies of the 2-crossed-module. In this situation, $H^1$ is
also a group and $H^0$ is abelian.

\subsection{The 2-groupoid associated to a crossed-module in
groupoids}{\label{SS;2gpd}}

To any crossed-module in groupoids $[\partial \: \mathcal{M} \to
\mathcal{N}]$ we can associate a strict 2-groupoid
$[\mathcal{N}/\mathcal{M}]$ as follows. The objects and the arrows
of $[\mathcal{N}/\mathcal{M}]$ are the ones of $\mathcal{N}$. Given
two arrows $g,h \in \mathcal{N}(x,y)$, a 2-arrow  $g \Rightarrow h$
is an element $\al \in \mathcal{M}(y)$ such that
$g\partial_y(\al)=h$. The composition of two 2-arrows $\al \: g
\Rightarrow h$ and $\be \: h \Rightarrow k$ is $\al\be$. If $k \in
\mathcal{N}(y,z)$, then $\al k \: gk \Rightarrow hk$ is defined to
be the 2-arrow corresponding to $\al^k \in \mathcal{N}(z)$. If $k
\in \mathcal{N}(z,x)$, then $k\al  \: kg \Rightarrow kh$ is defined
to be the 2-arrow corresponding to $\al$ itself.

In the case where  $[\partial \: \mathcal{M} \to \mathcal{N}]$ comes
from a 2-crossed-module, the 2-groupoid $[\mathcal{N}/\mathcal{M}]$
can be delooped to a 3-group. That is, there is a (weak) 3-groupoid
with one object such that  the morphisms from the unique object to
itself is equal to $[\mathcal{N}/\mathcal{M}]$. This is true because
$[\mathcal{N}/\mathcal{M}]$ is a strict group object in the category
of 2-groupoids. (Note that, although this is a strict group object,
the multiplication functor is lax. The laxness of the multiplication
functor is measured by the bracket of the  2-crossed-module.)

\subsection{Some useful identities}{\label{SS:identities}}

The following identities are frequently used in the (omitted) proofs
of many of the claims in these notes. The proofs are left to the
reader. In what follows $[M \to N]$ is a braided crossed-module.

\begin{itemize}
   \item For every $g,h \in N$, $\{g,h^{-1}\}^h=\{g,h\}^{-1}
          =\{g^{-1},h\}^g$.
   \item For every $g,h \in N$,  $\{g^{-1},h^{-1}\}^{gh}=\{g,h\}$.
   \item For every $g \in  N$, $\{g,g\}^g=\{g,g\}$.
   \item For every $g,h,k \in N$, $\{gh,k\}=\{h,g^{-1}kg\}\{g,k\}$.
   \item For every $g,h,k \in N$. $\{g,hk\}=\{g,h\}\{h^{-1}gh,k\}$.
   \item For every $g,h,k \in N$, $\{g,h\}^k=\{k^{-1}gk,k^{-1}hk\}$.

\end{itemize}

\providecommand{\bysame}{\leavevmode\hbox
to3em{\hrulefill}\thinspace}
\providecommand{\MR}{\relax\ifhmode\unskip\space\fi MR }
\providecommand{\MRhref}[2]{%
  \href{http://www.ams.org/mathscinet-getitem?mr=#1}{#2}
} \providecommand{\href}[2]{#2}

\end{document}